\theoremstyle{plain}
\newtheorem{lem}{Lemma}[section]
\newtheorem{prop}[lem]{Proposition}
\newtheorem{thm}[lem]{Theorem}
\newtheorem*{thm*}{Main Theorem}
\newtheorem*{thmA}{Theorem A}
\newtheorem*{thmB}{Theorem B}
\newtheorem*{corC}{Corollary C}
\newtheorem*{thmD}{Theorem D}
\newtheorem*{thmE}{Theorem E}
\newtheorem*{cor*}{Corollary}
\newtheorem*{cortheorem}{Correspondence Theorem}
\theoremstyle{definition}
\newtheorem{defn}[lem]{Definition}
\newtheorem*{defn*}{Definition}
\newtheorem{ex}[lem]{Example}
\newtheorem*{ex*}{Example}
\newtheorem*{rem*}{Remark}
\theoremstyle{remark}
\DeclareMathOperator{\dist}{dist}
\DeclareMathOperator{\Arg}{Arg}
\DeclareMathOperator{\Sing}{Sing}
\DeclareMathOperator{\Acc}{Acc}
\DeclareMathOperator{\IA}{IA}
\newcommand{\C}{\mathbb C}
\newcommand{\D}{\mathbb D}
\newcommand{\clC}{\widehat{\C}}
\newcommand{\chat}{\widehat{\C}}
\newcommand{\R}{\mathbb R}
\newcommand{\Z}{\mathbb Z}
\newcommand{\bd}{\partial}
\renewcommand{\Re}{\textup{Re}}
\renewcommand{\Im}{\textup{Im}}
\renewcommand{\H}{\mathbb H}
\newcommand{\inv}{^{-1}}
\newcommand{\cala}{\mathcal{A}}
\newcommand{\bdd}{\partial \D}
\newcommand{\A}{\mathcal A}
\begin{document}

\title{Accesses to infinity from Fatou components}
\date{\today}

\author{Krzysztof Bara\'nski}
\address{Institute of Mathematics, University of Warsaw,
ul.~Banacha~2, 02-097 Warszawa, Poland}
\email{baranski@mimuw.edu.pl}

\author{N\'uria Fagella}
\address{Departament de Matem\`atica Aplicada i An\`alisi,
Universitat de Barcelona, 08007 Barcelona, Catalonia, Spain}
\email{fagella@maia.ub.es}

\author{Xavier Jarque}
\address{Departament de Matem\`atica Aplicada i An\`alisi,
Universitat de Barcelona, 08007 Barcelona, Catalonia, Spain}
\email{xavier.jarque@ub.edu}

\author{Bogus{\l}awa Karpi\'nska}
\address{Faculty of Mathematics and Information Science, Warsaw
University of Technology, ul.~Ko\-szy\-ko\-wa~75, 00-661 Warszawa, Poland}
\email{bkarpin@mini.pw.edu.pl}

\thanks{Supported by the Polish NCN grant decision DEC-2012/06/M/ST1/00168. The 
second and third authors were partially supported by the Catalan grant 
2009SGR-792, and by the Spanish grant MTM2011-26995-C02-02.}
\subjclass[2010]{Primary 30D05, 37F10, 30D30.}

\bibliographystyle{plain}

\begin{abstract} 
We study the boundary behaviour of a meromorphic map $f: \C \to \clC$ on its invariant simply connected Fatou component $U$. To this aim, we develop the theory of accesses to boundary points of $U$ and their relation to the dynamics of $f$. In particular, we establish a correspondence between invariant accesses from $U$ to infinity or weakly repelling points of $f$ and boundary fixed points of the associated inner function on the unit disc. We apply our results to describe the accesses to infinity from invariant Fatou components of the Newton maps. 
\end{abstract}

\maketitle

\section{Introduction}\label{sec:intro}
We consider dynamical systems generated by the iteration of meromorphic maps $f:\C \to \chat$. We are especially interested in the case when $f$ is transcendental or, equivalently, when the point at infinity is an essential singularity of $f$ and hence the map is non-rational. There is a natural dynamical partition of the complex sphere $\clC$ into two completely invariant sets: the {\em Fatou set} $\mathcal{F}(f)$, consisting of points for which the iterates $\{f^n\}_{n\ge 0}$ are defined and form a normal family in some neighbourhood; and its complement, the {\em Julia set} $\mathcal{J}(f)$, where chaotic dynamics occurs. Note that in the transcendental case, we always have $\infty \in \mathcal{J}(f)$. The Fatou set is open and it is divided into connected components called {\em Fatou components}, which map among themselves. Periodic components are classified into {\em basins of attraction} of attracting or parabolic cycles, {\em rotation domains} (Siegel discs or Herman rings, depending on their genus, where the dynamics behaves 
like an irrational rotation), or {\em Baker domains} (components for which  $f^{pn}$ 
converge to infinity as $n \to \infty$ uniformly on compact sets, for some period $p\geq 1$). Components which are not eventually periodic are called {\em wandering} and they may or may not converge to infinity under iteration.

In this paper we are interested in the interplay between the dynamics of $f$ on a simply connected invariant Fatou component $U$, the geometry of the boundary of $U$ and the boundary behaviour of a Riemann map $\varphi:\D\to U$. The main motivation is to understand the structure of the Julia set near infinity of a {\em Newton map}, i.e.~a meromorphic map $N$ which is Newton's method of finding zeroes of an entire function $F$, 
\[                                                                                                                                                                                    N(z) = z - \frac{F(z)}{F'(z)}.                                                                                                                                                                                      \]
It is known that for the Newton maps, all connected components of the Fatou set are simply connected (which is not always the case for general meromorphic maps). This was proved by Shishikura \cite{shishikura} for rational Newton maps (Newton's method of polynomials) and by the authors \cite{bfjk} in the transcendental case (Newton's method of transcendental entire  maps). For a rational Newton map $N$, Hubbard, Schleicher and Sutherland studied  in \cite{hubbardschleicher} \emph{accesses} to infinity (called also \emph{channels}) from the attracting basins of $N$. They showed that the number of accesses to infinity in such a basin $U$, is finite and equal to the number of critical points of $N$ in $U$, counted with multiplicity. By describing the distribution of accesses near infinity, they were able to apply their results to find a good set of initial conditions (from a numerical point of view) for detecting a root of a polynomial of given degree. 

In the transcendental case, a Newton map on an invariant Fatou component may have infinite degree and 
its boundary behaviour is often much more complicated. Attempts to generalize the above results were done in \cite{johannes} by considering regions of the plane where the degree could be considered finite.  
However, the strict relation between the number of accesses to infinity from an invariant component $U$ of a Newton map $N$ and  its degree on $U$ does not hold in the transcendental case, as shown by the map $N(z) = z + e^{-z}$ (Newton's method applied to $F(z) = e^{-e^z}$), studied by Baker and Dom\'{\i}nguez in \cite{baker-dominguez}. Indeed, $N$ has infinitely many invariant Baker domains $U_k$, $k \in \Z$, such that $U_k = U_0 + 2k\pi i$,  $\deg N|_{U_k} = 2$ and $U_k$ has infinitely many accesses to infinity. See Figure~\ref{fig:bd}.

\begin{figure}[htbp!] \label{fig:bd}
\centering
\includegraphics[width=0.383\textwidth]{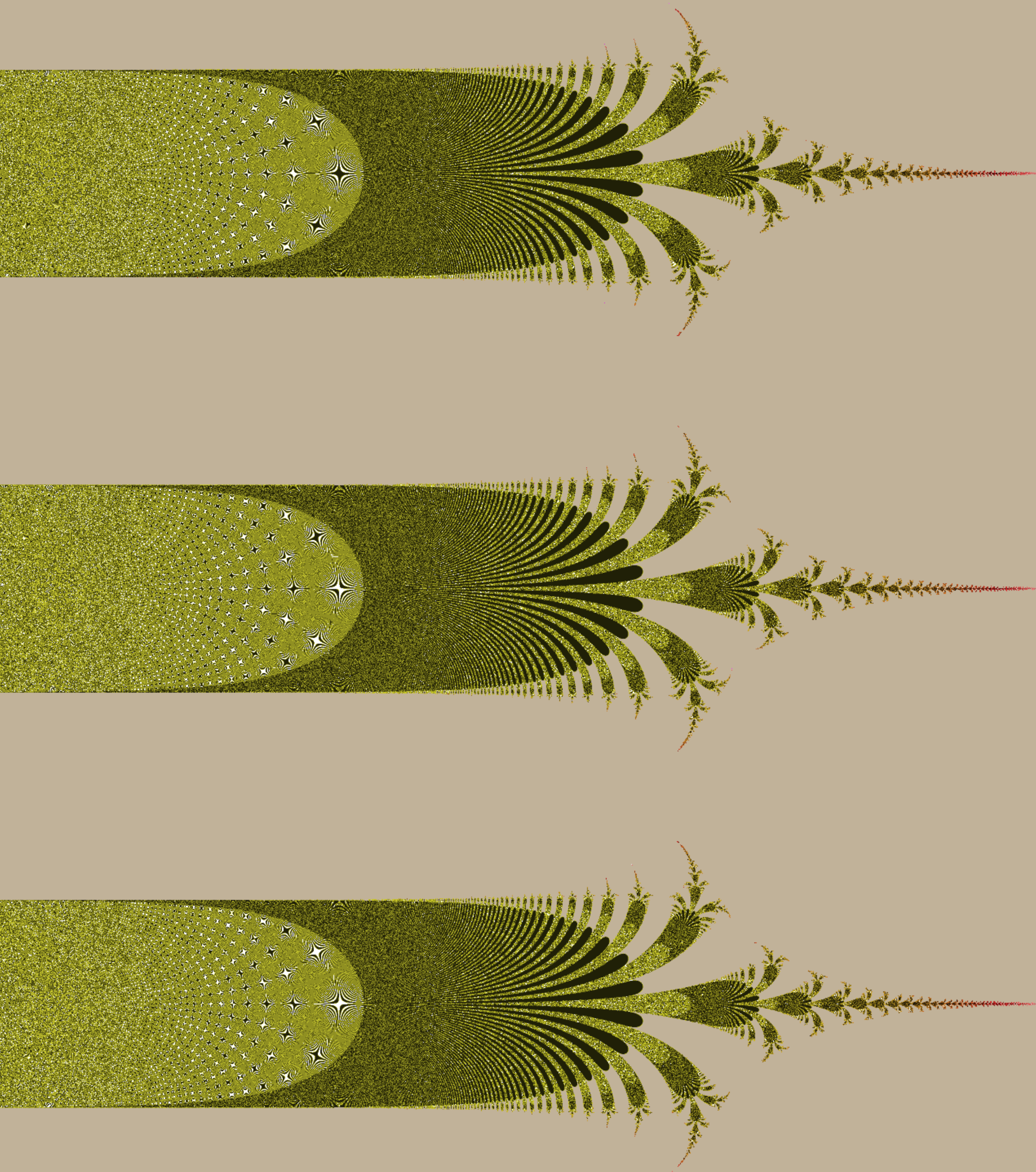}
\hfill
\includegraphics[width=0.577\textwidth]{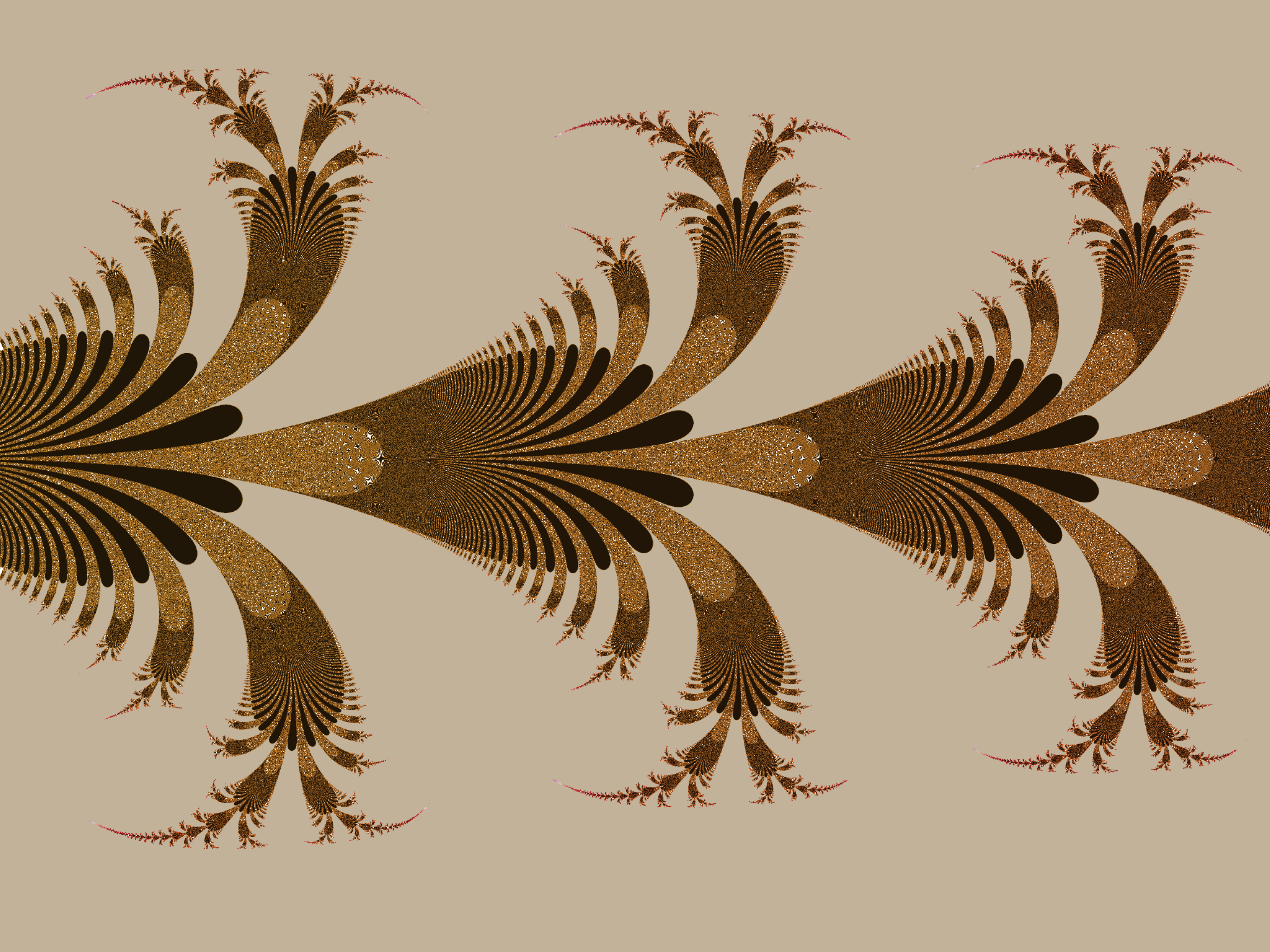}

\caption{\small Left: The dynamical plane of the map $N(z) = z +e^{-z}$, Newton's method of $F(z)=e^{-e^z}$, showing the accesses to infinity from invariant Baker domains $U_k$. Right: Zoom of the dynamical plane.}
\end{figure}

Another example of an interesting boundary behaviour is given by the Newton method applied to $F(z) = 1+z e^z$. Computer pictures suggest that the Newton map has infinitely many superattracting basins of the zeroes of $F$ on which the degree is $2$, and all but two of them show infinitely many accesses to infinity. The two special ones, adjacent to the real axis, exhibit only one access to infinity. See Figure~\ref{brown}. 

\begin{figure}[htbp!]
\centering
\includegraphics[width=0.48\textwidth]{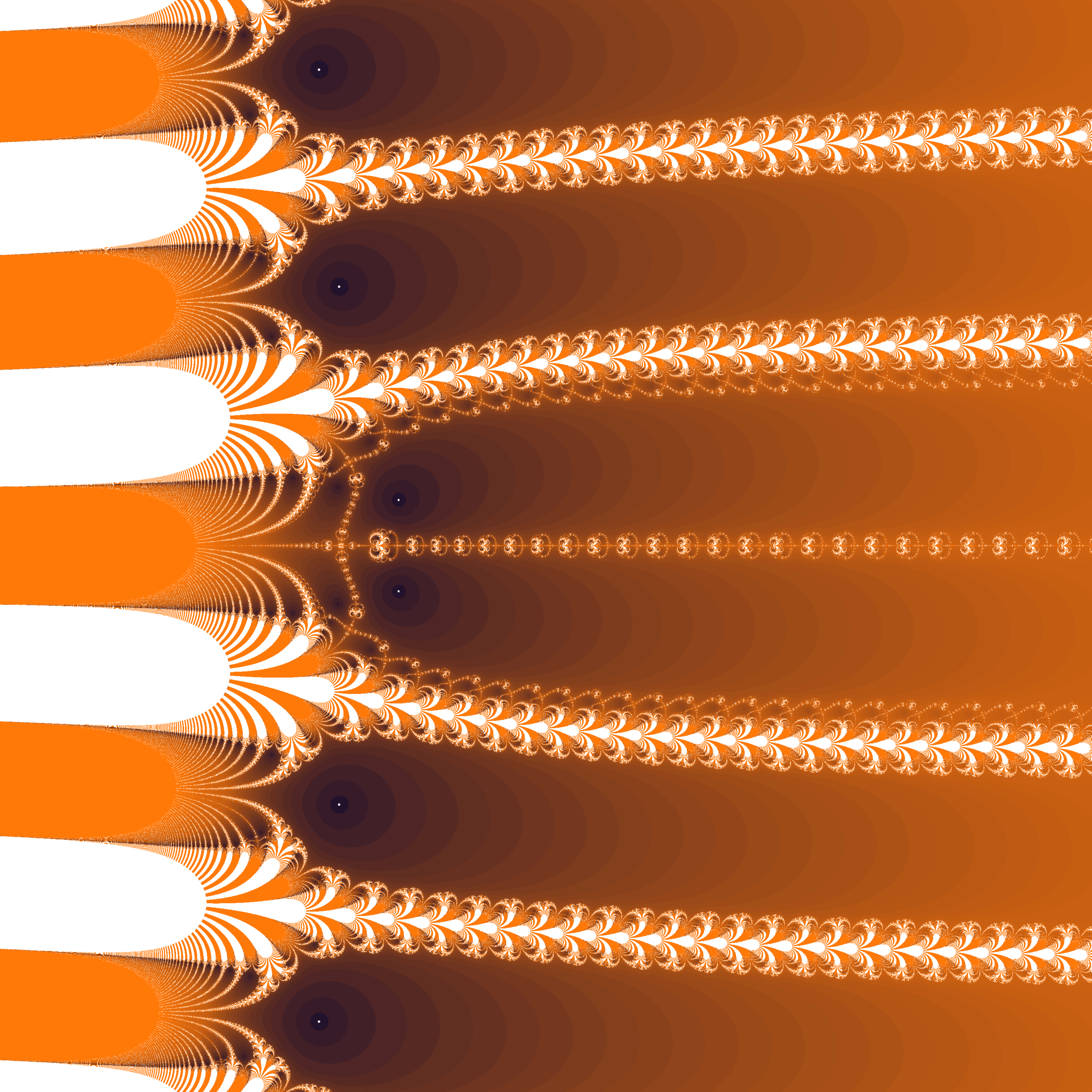}
\hfill
\includegraphics[width=0.48\textwidth]{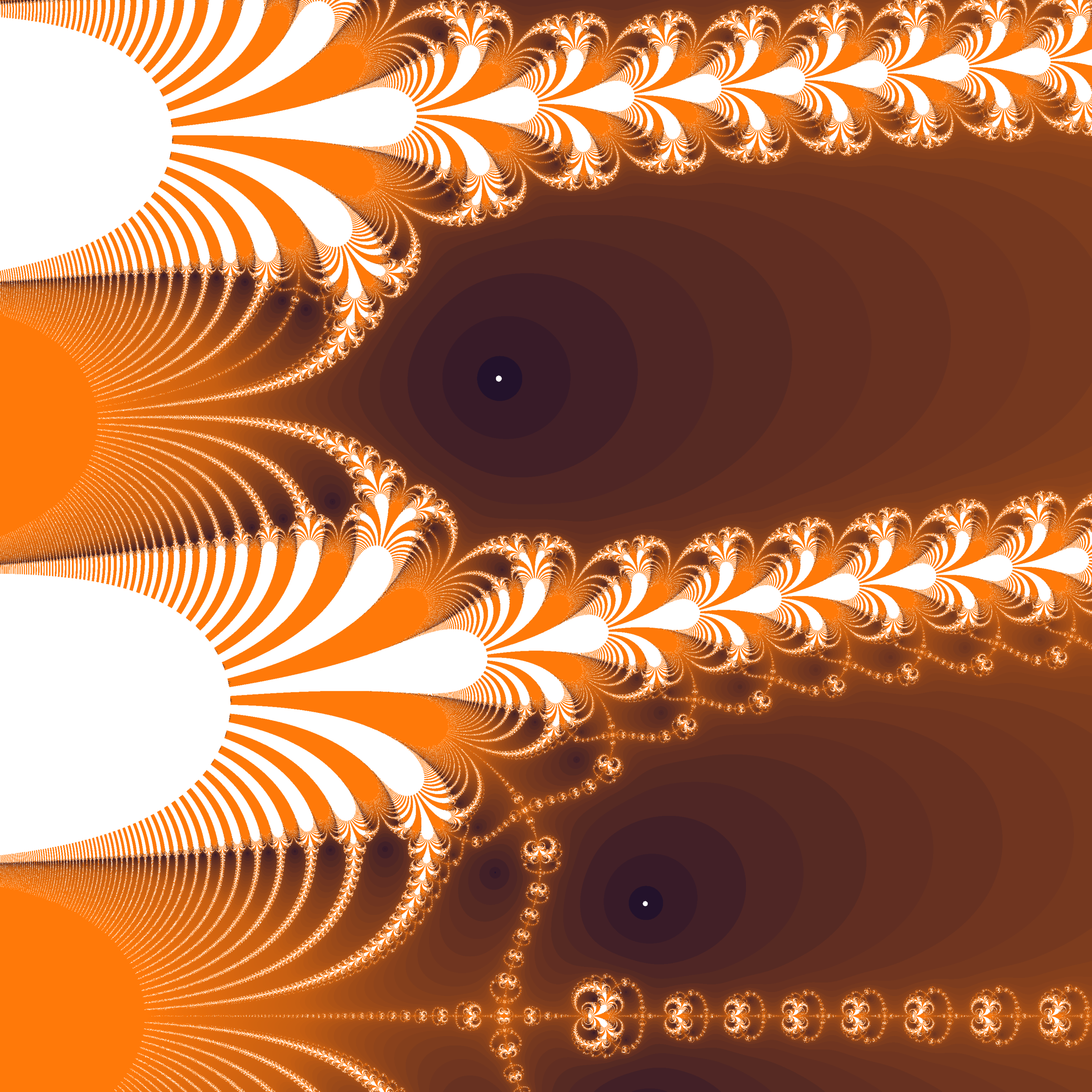}
\caption{\small \label{brown} Left: The dynamical plane of the map $N(z) = z - (z+e^{-z})/(z+1)$, Newton's method of $F(z)=1+z e^z$, showing the accesses to infinity from attracting basins of the zeroes of $F$. Right: Zoom of the dynamical plane.}
\end{figure}

Note that for transcendental entire maps, invariant Fatou components are always simply connected (see \cite{baker75}). In this setup, the investigation of the relations between the dynamics of $f$, the boundary behaviour of a Riemann map and the geometry of the boundary of an invariant Fatou component $U$ (in particular the question of the accessibility of the point at infinity and a possible number of different accesses to this point) has been a subject of interest for a long time, in which one finds a beautiful interplay between dynamics, analysis and topology. 
An early surprising example was given by Devaney and Goldberg \cite{devaney-goldberg} where they considered hyperbolic exponential maps $E(z)= \lambda e^z$ with a completely invariant basin of attraction $U$, whose boundary has infinitely many accesses to infinity. In fact, they studied the self-map of the disc $g=\varphi^{-1} \circ E \circ \varphi$ (called an {\em inner function associated to} $E|_U$), where $\varphi:\D\to U$ is a Riemann map. They showed that $\varphi$ has a radial limit at all points of $\bdd$, and those which correspond to accesses to infinity from $U$ form a dense set. Their results were later generalized by Bara\'nski and
Karpi\'nska \cite{coding} and Bara\'nski \cite{hairs} in the case of completely invariant basins of attraction of entire maps from the Eremenko--Lyubich class $\mathcal{B}$ of disjoint type (possibly of finite order). In the general setup of transcendental entire maps, the subject was further explored by Baker and Weinreich \cite{bw}, Baker and Dom\'{\i}nguez \cite{baker-dominguez} and Bargmann \cite{bargmann}. In particular, in \cite{baker-dominguez} it was proved that if $U$ is an invariant Fatou component of an transcendental entire map, which is not a univalent Baker domain, then infinity has either none or infinitely many accesses from $U$. 

In this paper we consider the accessibility of the point at infinity in the setup of transcendental meromorphic functions, i.e.~maps with preimages of the essential singularity at infinity (poles). Up to now, this subject has been almost untouched in the literature. The setting is different in many essential aspects from both the entire and rational case.  On one hand, infinite degree of the map on an invariant Fatou component makes the dynamics more complicated --  both for the map $f$ and for the inner function $g$ associated to $f$ on the given invariant component. On the other hand, a possible presence of poles seems to ``simplify'' the topology of the components' boundaries. Our goal in this paper is to explore these various relationships.  

\section{Main Results}

We start by giving a precise definition of the notion of an access to a boundary point from a planar domain. Let $U$ be a simply connected domain in the complex plane $\mathbb C$. By $\bd U$ we denote the boundary of $U$ in the Riemann sphere $\clC$. 

\begin{defn}[{\bf Access to boundary point}{}] 
A point $v\in \partial U$ is {\em accessible} from $U$, if there exists a curve  
$\gamma:[0,1] \to \clC$ such that $\gamma([0,1)) \subset U$ and $\gamma(1) = 
v$. We also say that $\gamma$ {\em lands} at $v$. (Equivalently, one can consider curves $\gamma:[0,1) \to U$ such that $\lim_{t \to 1^-} \gamma(t) = v$).

Fix a point $z_0 \in U$ and an accessible point $v\in \partial U$. A homotopy 
class (with fixed endpoints) of curves $\gamma:[0,1] \to \clC$ such that 
$\gamma([0,1)) \subset U$, $\gamma(0) = z_0$, $\gamma(1) = v$ is called an {\em 
access} from $U$ to the point $v$ (or an access to $v$ from $U$).
\end{defn}

The choice of the point $z_0$ is irrelevant in the sense that for any other point $z_0' \in U$ there is a one-to-one correspondence between accesses defined by curves starting from $z_0$ and from $z_0'$, given by $\gamma \longleftrightarrow \gamma_0 \cup \gamma$, where $\gamma_0$ is a curve connecting $z_0$ to $z_0'$ in $U$ (with a suitable parameterization). For convenience, throughout the paper we assume $z_0 = \varphi(0)$,
where 
$$\varphi : \D \to U$$ 
is a Riemann map from the open unit disc onto $U$.  

Every point $\zeta\in\partial \D$ for which the radial limit $\lim_{t\to 1^-} \varphi(t \zeta)$ of $\varphi$ exists and equals a point $v\in\partial U$ defines an access to $v$ from $U$ in the obvious way. But a stronger correspondence is true. 
\begin{cortheorem}
Let $U\subset \C$ be a simply connected domain and let $v \in \bd U$. Then there is a one-to-one correspondence between accesses from $U$ to $v$ and points $\zeta \in \bdd$, such that the radial limit of $\varphi$ at $\zeta$ exists and is equal to $v$. The correspondence is given as follows.
\begin{enumerate}[\rm (a)]
\item If $\A$ is an access to $v \in\bd U$, then there is a point $\zeta \in \bdd$, such that the radial limit of $\varphi$ at $\zeta$ is equal to $v$,  and for every $\gamma \in \A$, the curve $\varphi^{-1}(\gamma)$ lands at $\zeta$. Moreover, different accesses correspond to different points in $\partial \D$.
\item If the radial limit of $\varphi$ at a point $\zeta 
\in \bdd$ is equal to $v \in \bd U$, then there exists an access $\A$ to $v$, such that for every curve $\eta \subset \D$ landing at $\zeta$, if $\varphi(\eta)$ lands at some point $w \in\clC$, then $w = v$ and $\varphi(\eta) \in \A$.
\end{enumerate}
Equivalently, there is a one-to-one correspondence between accesses to $v$ from $U$ and the prime ends of $\varphi$ of the first or second type, whose impressions contain $v$ as 
a principal point. 
\end{cortheorem}
Although this is a folklore result used in several papers, we were unable to find a written reference for it. Hence, for completeness we include its proof in Section~\ref{sec:accesses}.

Now suppose that 
\[
f:U\to U
\]
is a holomorphic map. Then the map $f$ induces some dynamics on the set of  accesses to boundary points of $U$.

\begin{defn}[{\bf Invariant access}{}]  An access $\A$ to $v$ is \emph{invariant}, if there exists $\gamma \in \A$, such that $f(\gamma) \cup \eta \in \A$, where $\eta$ is a curve  connecting $z_0$ to $f(z_0)$ in $U$. An access $\A$ to $v$ is \emph{strongly invariant}, if for every $\gamma \in \A$, we have $f(\gamma) \cup \eta \in \A$.
\end{defn}

\begin{rem*} Since $U$ is simply connected, the choice of the curve $\eta$ is irrelevant.
\end{rem*}

\begin{rem*} As we consider two kinds of invariance of accesses, our terminology differs slightly from the one in \cite{johannes}. An invariant access in the sense of \cite{johannes} is called here strongly invariant.
\end{rem*}

As an example, observe that every simply connected invariant Baker domain $U$ of an entire or meromorphic map $f$ has an invariant access to infinity. Indeed, given any point $z\in U$ and a curve $\gamma$ joining $z$ and $f(z)$ within $U$, the curve $\Gamma:=\bigcup_{n\geq 0} f^n(\gamma)$ is unbounded and lands at infinity, defining an access to this point from $U$, which we call the {\em dynamical access} to infinity from $U$. Since $f(\Gamma) = \bigcup_{n\geq 1} f^n(\gamma) \subset \Gamma$ also lands at infinity, this access is invariant. Similarly, every simply connected invariant parabolic basin $U$ of an entire or meromorphic map $f$ has an invariant dynamical access to a parabolic fixed point $v \in \bd U$. 

\begin{rem*} 
In the same way one can define periodic and strongly periodic accesses to  boundary points of $U$. In the case of a rational map $f$, the analysis of periodic accesses to boundary points is strongly related to the study of landing periodic rays in simply connected invariant Fatou components, e.g.~basins of infinity for polynomials, see e.g.~\cite{carlesongamelin,milnor} and the references therein.
\end{rem*}

The notions of invariance and strong invariance of accesses do not coincide, as shown by the following example.

\begin{ex*}[{\bf Invariant access which is not strongly invariant}{}]
Let $f:\C \to \clC$, $f(z) = z + \tan z$ and $U  = \{z \in \C \mid \Im(z) > 0\}$. Then $U$ is an invariant Baker domain of $f$ and the dynamical access to $\infty$ defined by the homotopy class of the curve $i\R_+$ is invariant, but not strongly invariant (see Example~\ref{ex1} for details). Another example of an invariant access which is not strongly invariant is given in Example~\ref{ex2}.
\end{ex*}

\begin{ex*}[{\bf Strongly invariant access}{}]
Let $f:\C \to \clC$, $f(z)=z-\tan z $, the Newton's method of $F(z)=\sin z$. Then the Fatou set of $f$ contains infinitely many basins of attraction $U_k$, $k\in\Z$ of superattracting fixed points $k\pi$, and every $U_k$ has two strongly invariant accesses to infinity. See Example~\ref{ex5} for details. 
\end{ex*}

The following proposition gives a precise condition for an invariant access to be  strongly invariant. The proof is included in Section~\ref{sec:TheoremE}.

\begin{prop}[{\bf Characterization of strongly invariant accesses}{}] \label{strongly}
Let $U\subset \C$ be a simply connected domain and let $f:U\to U$ be a holomorphic map. Suppose $\cala$ is an invariant access from $U$ to a point $v\in \partial U$. Then $\cala$ is strongly invariant if and only if for every $\gamma \in\cala$, the curve $f(\gamma)$ lands at some point in $\partial U$.
\end{prop}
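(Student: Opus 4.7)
The proof has two directions. For $(\Rightarrow)$, if $\A$ is strongly invariant then for every $\gamma \in \A$ the curve $f(\gamma) \cup \eta$ represents $\A$ and hence lands at $v$; since $\eta$ is a bounded curve joining $z_0$ to $f(z_0)$ inside $U$, the landing at $v \in \partial U$ must occur along the $f(\gamma)$ piece, so $f(\gamma)$ lands at $v$.

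For $(\Leftarrow)$, assume the landing hypothesis and fix $\gamma_0 \in \A$ witnessing invariance, so $f(\gamma_0) \cup \eta \in \A$ and in particular $f(\gamma_0)$ lands at $v$. Let $\varphi \colon \D \to U$ be the Riemann map with $\varphi(0) = z_0$, let $g = \varphi^{-1} \circ f \circ \varphi$ be the associated inner function, and let $\zeta \in \bdd$ be the point corresponding to $\A$ via the Correspondence Theorem. Writing $\tilde\gamma_0 = \varphi^{-1}(\gamma_0)$ and applying Correspondence Theorem~(a) both to $\gamma_0$ and to $f(\gamma_0) \cup \eta$, we see that $\tilde\gamma_0$ and $g(\tilde\gamma_0) = \varphi^{-1}(f(\gamma_0))$ both land at $\zeta$ (the second since $\varphi^{-1}(\eta) \cdot g(\tilde\gamma_0)$ lands at $\zeta$ and $\varphi^{-1}(\eta)$ is a bounded curve inside $\D$). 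By Lindel\"of's theorem applied to the bounded holomorphic self-map $g$, this means $g$ has non-tangential limit $\zeta$ at $\zeta$.

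Given any $\gamma \in \A$, set $\tilde\gamma = \varphi^{-1}(\gamma)$ and $h = g(\tilde\gamma) = \varphi^{-1}(f(\gamma))$. By hypothesis $f(\gamma) = \varphi(h)$ lands at some $w \in \partial U$, so $f(\gamma) \cup \eta$ represents an access $\A'$ to $w$ corresponding by the Correspondence Theorem to a point $\zeta' \in \bdd$ at which $\varphi$ has radial limit $w$; consequently $h$ lands at $\zeta'$. Strong invariance reduces to proving $\zeta' = \zeta$ for every $\gamma \in \A$, as this identifies $w$ with $v$ and $\A'$ with $\A$, whence $f(\gamma) \cup \eta \in \A$.

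The crux is thus to verify $\zeta' = \zeta$. When $\tilde\gamma$ approaches $\zeta$ non-tangentially, this is immediate from the non-tangential limit of $g$ at $\zeta$. The delicate tangential case is where the full strength of the landing hypothesis is needed: applying it to every element of a homotopy $(\gamma_s)_{s \in [0, 1]}$ inside $\A$ between $\gamma_0$ and $\gamma$, we obtain a family of boundary points $\zeta'_s \in \bdd$, and a connectedness/interlacing argument is intended to rule out non-constancy by splicing pieces of the $\gamma_s$ near $v$ to produce a curve in $\A$ whose $g$-image oscillates between neighborhoods of two distinct points of $\bdd$ and so fails to land, contradicting the hypothesis applied to the spliced curve. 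The main obstacle will be carrying out this splicing so that the result genuinely belongs to $\A$; the plan is to localize the modifications inside a neighborhood of $\zeta$ in $\D$ where all curves landing at $\zeta$ are homotopic rel endpoints in $\D \cup \{\zeta\}$, so their $\varphi$-images remain in the prime-end class corresponding to $\A$.
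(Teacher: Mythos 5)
Your setup matches the paper's argument closely: both directions, the identification of $\zeta\in\bdd$ via the Correspondence Theorem, the observation that $\zeta$ is a radial boundary fixed point of $g$, and the reduction to showing $\zeta'=\zeta$ for an arbitrary $\gamma\in\cala$. The gap is in the last step. You treat the possibility that $\tilde\gamma=\varphi^{-1}(\gamma)$ approaches $\zeta$ tangentially as a ``delicate case'' needing a homotopy/splicing argument, which you do not carry out and which would be awkward to make rigorous. But this worry is unfounded, because it rests on reading the Lehto--Virtanen Theorem in the wrong direction. That theorem (which you already invoked, under the name Lindel\"of, to conclude that $g$ has angular limit $\zeta$ at $\zeta$) says precisely: if $h:\D\to\clC$ is normal and $h\to w$ along \emph{any} curve landing at $\zeta\in\bdd$ --- tangential approach explicitly allowed --- then $h$ has angular limit $w$ at $\zeta$. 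Applying it once more to $g$ along $\tilde\gamma$ (which lands at $\zeta$, with $g(\tilde\gamma)$ landing at $\zeta'$) gives that the angular, hence radial, limit of $g$ at $\zeta$ is $\zeta'$; since you already know this radial limit is $\zeta$, uniqueness of radial limits forces $\zeta'=\zeta$ immediately. No splicing, no separate tangential case. This is exactly how the paper closes the argument.

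One minor stylistic difference: the paper cites Theorem~A to get that $\cala$ corresponds to a boundary fixed point of $g$, whereas you derive it directly from the Correspondence Theorem and Lehto--Virtanen. Your route is actually cleaner in this instance, since Proposition~\ref{strongly} is stated for a general holomorphic self-map $f:U\to U$, while Theorem~A is phrased for a global meromorphic $f$ and an invariant Fatou component; the self-contained derivation sidesteps that mismatch.
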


From now on, assume that 
\[
f: \C \to \clC
\]
is meromorphic and $U \subset \C$ is a simply connected invariant Fatou component of $f$. Although our results hold also for rational maps, we are mainly interested in the case when $f$ is transcendental. As above, $\varphi:\D\to U$ is a Riemann map. Let 
\[
g: \D \to \D, \qquad  g = \varphi^{-1} \circ f \circ \varphi,
\]
be the \emph{inner function associated to} $f|_U$. Then $g$ has radial limits belonging to $\bdd$ at almost every point of $\bdd$ (see Section~\ref{sec:prelim}). 

Our first result shows a relation between the dynamics of the inner function $g$ and the dynamics of $f$ on the set of accesses of $U$. More precisely, we shall relate invariant accesses in $U$ and fixed points of $g$ in $\partial \D$. Since $g$ does not necessarily extend to $\partial \D$, we use a weaker concept of a (\emph{radial}) {\em boundary fixed point} of $g$, i.e.~a point $\zeta \in \bdd$, such that the radial limit of $g$ at $\zeta$ is equal to $\zeta$. The Julia--Wolff Lemma (Theorem~\ref{jw}) ensures that at such points the \emph{angular derivative} $\lim_{t \to 1^-} (g(t\zeta) - \zeta)/((t-1)\zeta)$ of $g$ exists and belongs to $(0,+\infty) \cup \{\infty\}$ (see Section~\ref{sec:prelim}). The finiteness of the angular derivative turns out to be essential. 

\begin{defn}[{\bf Regular boundary  fixed point}{}] We say that a boundary fixed point $\zeta \in \bdd$ of a holomorphic map $g: \D \to \D$ is \emph{regular}, if the angular derivative of $g$ at $\zeta$ is finite.
\end{defn}

Our first result is the following dynamical version of the Correspondence Theorem.
Recall that a fixed point $z$ of a holomorphic map $f$ is called weakly repelling, if $|f'(z)| > 1$ or $f'(z) = 1$.

\begin{thmA}
Let $f:\C\to\chat$ be a meromorphic map and $U\subset \C$ a simply connected invariant Fatou component. Let $\varphi:\D\to U$ be a Riemann map and $g= \varphi^{-1} \circ f \circ \varphi :\D\to\D$ the inner function associated to $f|_U$. Then the following hold.
\begin{enumerate}[\rm (a)]
\item If $\cala$ is an invariant access from $U$ to a point $v \in \bd U$, then $v$ is either infinity or a fixed point of $f$ and $\A$ corresponds to a boundary fixed point $\zeta  \in \partial \D$ of $g$. 
\item If $\zeta \in \partial \D$ is a regular boundary fixed point of $g$, then the radial limit of $\varphi$ at $\zeta$ exists and is equal to $v$, where $v$ is either infinity or a weakly repelling fixed point of $f$ in $\partial U$. Moreover, $\zeta$ corresponds to an invariant access $\A$ from $U$ to $v$.
\end{enumerate}
\end{thmA}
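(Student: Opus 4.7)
The proof splits along the two directions of the biconditional. The overarching strategy uses the Correspondence Theorem to translate between accesses in $U$ and points on $\bdd$, together with the Lehto--Virtanen theorem on normal functions to upgrade ``asymptotic value along a curve'' into a radial limit.

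For part (a), fix a representative $\gamma\in\cala$ and set $\tilde\gamma:=\varphi^{-1}\circ\gamma$. The Correspondence Theorem produces a unique $\zeta\in\bdd$ at which $\tilde\gamma$ lands and whose radial $\varphi$-limit is $v$. Invariance of $\cala$ means that $f\circ\gamma$, joined with a short arc $\eta$ from $z_0$ to $f(z_0)$ in $U$, again represents $\cala$, so $\varphi^{-1}\circ f\circ\gamma=g\circ\tilde\gamma$ lands at the same $\zeta$. Since $g$ is bounded, hence normal on $\D$, Lehto--Virtanen turns the asymptotic value $\zeta$ of $g$ at $\zeta$ along $\tilde\gamma$ into an angular (in particular radial) limit, so $\zeta$ is a boundary fixed point of $g$. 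If $v\in\C$, meromorphy of $f$ gives $f(\gamma(t))\to f(v)\in\chat$, with $f(v)=\infty$ in case $v$ is a pole; comparing with $f\circ\gamma\to v$ excludes the pole case and forces $f(v)=v$.

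For part (b), the plan is to construct a curve in $\D$ landing at the regular boundary fixed point $\zeta$ with a suitable $g$-invariance, push it forward by $\varphi$, and extract an invariant access. The Julia--Wolff lemma gives $g'(\zeta)\in(0,\infty)$, and the Denjoy--Wolff theorem separates two cases. If $g'(\zeta)\le 1$, then $\zeta$ is the Denjoy--Wolff point and $g^n\to\zeta$ locally uniformly on $\D$; taking any arc $\sigma_0$ joining some $z_1$ to $g(z_1)$, the concatenation $\sigma:=\bigcup_{n\ge 0}g^n(\sigma_0)$ is connected, $g$-invariant, and confined to shrinking wedges by the invariant horocycle of Julia's Lemma, hence lands at $\zeta$. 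If $g'(\zeta)>1$, an analogous construction uses backward iteration inside a horodisc at $\zeta$, available because the horocycle interior is mapped strictly into itself with a definite contraction factor (Julia's Lemma again), producing a curve $\sigma$ landing at $\zeta$.

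Set $\Gamma:=\varphi(\sigma)\subset U$. Univalence of $\varphi$ makes it normal as a map into $\chat$, so by Lehto--Virtanen it suffices to show that $\Gamma$ has an asymptotic value $v\in\chat$, which will automatically be the radial limit of $\varphi$ at $\zeta$. The existence of this asymptotic value is the main obstacle of the argument and is forced dynamically: $f(\Gamma)\subset\Gamma$ modulo a compact piece (or its backward counterpart), so the accumulation set of $\Gamma$ in $\chat$ is a compact, connected, $f$-invariant subset of $\partial U\cup\{\infty\}$, and the finite angular derivative of $g$ at $\zeta$ prevents this accumulation from being a non-degenerate continuum, leaving a single point $v$. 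The homotopy class of $\Gamma$ then defines the invariant access $\cala$ corresponding to $\zeta$. Applying part (a) to $\cala$ yields $v=\infty$ or $f(v)=v$; since $v\in\partial U\subset\J(f)$, attracting and Siegel fixed points are excluded, and transferring the Julia--Wolff derivative data from $g$ through $\varphi$ rules out the remaining indifferent cases with $f'(v)\ne 1$, so $v$ is weakly repelling.
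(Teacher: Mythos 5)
Part (a) of your argument is essentially the paper's: the Correspondence Theorem gives the landing point $\zeta$ for $\varphi^{-1}(\gamma)$, invariance forces $g(\varphi^{-1}(\gamma))$ to land at the same $\zeta$, and Lehto--Virtanen converts the asymptotic value along the curve into a radial limit of $g$ at $\zeta$. Your remark that the pole case is automatically excluded is correct and only a slight elaboration of the paper's ``by continuity.''

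Part (b), however, departs from the paper and contains genuine gaps. Your plan is to build a $g$-invariant curve $\sigma$ landing at $\zeta$ by forward or backward iteration and then push forward by $\varphi$. Two of its steps are unsupported. First, the crucial step is to show that $\Gamma=\varphi(\sigma)$ lands at a single point; you assert that ``the finite angular derivative of $g$ at $\zeta$ prevents this accumulation from being a non-degenerate continuum,'' but you give no mechanism for why that should be true. The accumulation set of a curve is automatically a continuum; to rule out a non-degenerate one you need to show it sits inside a discrete set, and nothing in your argument produces discreteness. The paper gets this by a different and sharper observation: regularity of $\zeta$ and the Julia--Wolff Lemma give a uniform bound $\varrho_{\D}(g(t\zeta),t\zeta)<c$, which transfers by conformal invariance to $\varrho_U(f(\gamma(t)),\gamma(t))<c$ for the radial curve $\gamma(t)=\varphi(t\zeta)$; combining this with the standard lower estimate for the hyperbolic metric in a simply connected domain forces every finite accumulation point $v$ of $\gamma$ to satisfy $f(v)=v$. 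Since the set of fixed points of $f$ (together with $\infty$) is discrete, the connected accumulation set of $\gamma$ must be a single point. Your curve-construction route also needs a well-defined inverse branch of $g$ in the repelling case, which is nontrivial for a general inner function (backward iteration at a repelling boundary fixed point is a theorem in its own right, not merely ``Julia's Lemma again'').

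Second, your claim that weak repellingness follows by ``transferring the Julia--Wolff derivative data from $g$ through $\varphi$'' does not work: $\varphi$ need not be differentiable (even in the angular sense) at $\zeta$, so you cannot relate $g'(\zeta)$ to $f'(v)$ this way. The paper instead proves a modified Douady--Hubbard--Sullivan Snail Lemma (Lemma~\ref{prop:lands}), which replaces the usual invariance hypothesis on the landing curve by the bounded hyperbolic displacement $\varrho_U(f(\gamma(t)),\gamma(t))<c$, and then runs an index argument in $U$ to exclude $f'(v)=e^{2\pi i\beta}$, $\beta\in(0,1)$. Finally, to show $\zeta$ actually corresponds to an invariant access, the paper uses one more hyperbolic-distance lemma (Lemma~\ref{lem:hyp}) saying that two curves from $z_0$ whose hyperbolic distance stays bounded define the same access; this is what upgrades ``$\gamma$ and $f(\gamma)$ both land at $v$'' into ``they lie in the same access.'' Your proposal does not address this step. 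In short: part (a) is fine; part (b) needs the hyperbolic displacement estimate from regularity, the discreteness-of-fixed-points argument for landing, a snail-type lemma for weak repelling, and an explicit criterion for two curves to lie in the same access.
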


\begin{rem*}
If $\zeta\in\bdd$ is an irregular  boundary fixed point of $g$ (i.e.~$g$ has infinite angular derivative at $\zeta$) and $\varphi$ has a radial limit at $\zeta$ equal to a point $v\in\partial U$, then $\zeta$ still corresponds to an access from $U$ to $v$, but this access may be not invariant. To ensure invariance, one would need the unrestricted limit of $\varphi$ to exist or, equivalently, the impression of the prime end $\zeta$ to be the singleton $\{v\}$. In particular, this holds when $\bd U$ is locally connected.
\end{rem*}

\begin{rem*}
If $g$ extends holomorphically to a neighbourhood of the boundary fixed point $\zeta$, then one can show that $\cala$ contains an invariant curve (in fact many of them). More precisely, there is a curve $\gamma$ in $U$ landing at $\infty$ or at a weakly repelling fixed point of $f$ in $\bd U$, such that $f(\gamma) \supset \gamma$ (in the case when $\zeta$ is repelling) or $f(\gamma) \subset \gamma$ (in the case when $\zeta$ is attracting or parabolic). Such a curve can be constructed by the use of suitable local coordinates near $\zeta$ (see e.g.~\cite{carlesongamelin, milnor}). 
\end{rem*}

In order to use Theorem~A for describing the set of accesses to infinity from $U$, we need the following notion of a singularly nice map. 

\begin{defn}\label{defn:sparse}  A point $\zeta^* \in \bd \D$ is a {\em singularity }of $g$, if $g$ cannot be extended holomorphically to any neighbourhood of $\zeta^*$ in $\C$.  
We say that $f|_U$ is {\em singularly nice} if there exists a singularity $\zeta^*\in\bdd $ of the inner function $g$ associated to $f|_U$, such that the angular derivative of $g$ is finite at every point $\zeta$ in some punctured neighbourhood of $\zeta^*$ in $\bdd$. 
\end{defn}

\begin{rem*}
If $g$ has an isolated singularity in $\bdd$, then $f|_U$ is singularly nice. Indeed, in this case $g$ extends holomorphically to a neighbourhood of any point $\zeta \in \bdd$, $\zeta \neq \zeta^*$ close to $\zeta^*$, so the derivative of $g$ exists and is finite at $\zeta$.
\end{rem*}

Note that by definition, if $f|_U$ is singularly nice, then $\deg f|_U$ (and hence $\deg g$) is infinite. If $\deg f|_U$ is finite, then $g$ is a finite Blaschke product and extends by the Schwarz reflection to the whole Riemann sphere. In this case, $g$ has finite derivative at every point $\zeta \in \bdd$ (see Section~\ref{sec:prelim}).

The following proposition is proved in Section~\ref{sec:TheoremE}. and gives a useful condition to ensure that $f|_U$ is singularly nice.

\begin{prop}[{\bf Singularly nice maps}{}]\label{prop:sing}
Let $f:\C \to \clC$ be a meromorphic map and $U \subset \C$ a simply connected invariant Fatou component such that $\deg f|_U = \infty$. If there exists a non-empty open set $W \subset U$, such that for every $z \in W$ the set $f^{-1}(z) \cap U$ is contained in the union of a finite number of curves in $U$ landing at some points of $\bd U$, then $f|_U$ is singularly nice. 
\end{prop}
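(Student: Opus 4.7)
My plan is to show that the singular set $\Sigma\subset\bdd$ of $g$ is in fact \emph{finite}; once this is done, any $\zeta^*\in\Sigma$ is automatically isolated in $\Sigma$, so $g$ extends holomorphically on a punctured arc neighborhood of $\zeta^*$, and therefore has finite ordinary (hence angular) derivative there. The key tool will be Frostman's theorem on inner functions, which says that for a ``generic'' value $\lambda$ the preimage $g^{-1}(\lambda)$ accumulates on $\bdd$ exactly at $\Sigma$.

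First I would record what $\deg f|_U=\infty$ gives us. Since $\varphi$ is a biholomorphism, $\deg g=\infty$, so $g$ is not a finite Blaschke product; in particular $\Sigma\neq\emptyset$. Frostman's theorem (see e.g.\ Garnett, \emph{Bounded Analytic Functions}, Ch.~II) provides an exceptional set $E\subset\D$ of logarithmic capacity zero such that for every $\lambda\in\D\setminus E$, the M\"obius transform $g_\lambda:=(g-\lambda)/(1-\bar\lambda g)$ is a Blaschke product. Since $g_\lambda$ is obtained from $g$ by post-composition with an automorphism of $\D$, one has $\deg g_\lambda=\deg g=\infty$, so $g_\lambda$ is an \emph{infinite} Blaschke product; its zero set is $g^{-1}(\lambda)$, its singular set coincides with $\Sigma$, and the singular set of an infinite Blaschke product on $\bdd$ is precisely the set of accumulation points of its zeros. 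Hence for $\lambda\in\D\setminus E$, the accumulation set of $g^{-1}(\lambda)$ on $\bdd$ equals $\Sigma$.

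Next I would feed in the hypothesis. Set $W':=\varphi^{-1}(W)\subset\D$, a non-empty open set. Since $E$ has capacity zero (hence Lebesgue measure zero), I can pick $w_0\in W'\setminus E$; put $z_0=\varphi(w_0)\in W$. By hypothesis there exist finitely many curves $\gamma_1,\dots,\gamma_k\subset U$, each landing at some $p_i\in\partial U$, with $f^{-1}(z_0)\cap U\subset\bigcup_i\gamma_i$. Each $\gamma_i$ defines an access from $U$ to $p_i$, so by the Correspondence Theorem the pullback $\tilde\gamma_i:=\varphi^{-1}(\gamma_i)$ is a curve in $\D$ landing at some $\zeta_i\in\bdd$; a short compactness argument (the initial segment $\tilde\gamma_i([0,t])$ is compact in $\D$ for $t<1$, while $\tilde\gamma_i$ lands at $\zeta_i$) then yields $\overline{\tilde\gamma_i}\cap\bdd=\{\zeta_i\}$. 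From
$$g^{-1}(w_0)=\varphi^{-1}\bigl(f^{-1}(z_0)\cap U\bigr)\subset\bigcup_{i=1}^k\tilde\gamma_i$$
the accumulation set of $g^{-1}(w_0)$ on $\bdd$ lies in $\{\zeta_1,\dots,\zeta_k\}$, and combining with the first step gives $\Sigma\subset\{\zeta_1,\dots,\zeta_k\}$, a finite set.

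To conclude, I would pick any $\zeta^*\in\Sigma$; since $\Sigma$ is finite there is an open arc $I\subset\bdd$ with $I\cap\Sigma=\{\zeta^*\}$, and on $I\setminus\{\zeta^*\}$ the map $g$ extends holomorphically to a neighborhood in $\C$ of each point, forcing the angular derivative to be finite there. Thus $f|_U$ is singularly nice with singularity $\zeta^*$. The step that carries the real content is the appeal to Frostman's theorem: it is what converts the mild-looking ``preimages lie on finitely many landing curves'' hypothesis into the strong conclusion that $\Sigma$ is actually finite. The remaining ingredients---the Correspondence Theorem and the topology of landing curves---are bookkeeping.
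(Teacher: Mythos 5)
Your proposal is correct and follows essentially the same route as the paper. The only cosmetic difference is that you invoke Frostman's theorem directly, whereas the paper packages exactly this content (for almost every $z\in\D$, $\Sing(g)$ equals the set of accumulation points of $g^{-1}(z)$) as part~(c) of its preliminary Theorem~\ref{thm:bargmann} and cites Garnett for the proof; the remainder — pulling back the finitely many landing curves via the Riemann map to show the accumulation set, hence $\Sing(g)$, is finite, and then using $\deg f|_U=\infty$ to guarantee $\Sing(g)\neq\emptyset$ so that an isolated singularity exists — matches the paper's argument step for step.
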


 In Section~\ref{sec:examples} we shall see several applications of this criterion.

Define the sets 
\begin{align*}
\IA(U)&= \{\text{invariant accesses from $U$ to its boundary points}\}\\
\IA(\infty, U)&= \{\text{invariant accesses from $U$ to infinity}\}\\
\IA({\rm wrfp}, U)&=\{\text{invariant accesses from $U$ to weakly repelling fixed points of $f$ in $\partial U$}\}.
\end{align*}

As the first application of Theorem~A, we prove the following result.

\begin{thmB}
Let $f:\C \to \clC$ be a meromorphic map and let $U \subset \C$ be a  simply connected invariant Fatou component of $f$. Let $\varphi:\D\to U$ be a Riemann map and $g= \varphi^{-1} \circ f \circ \varphi:\D\to\D$ the inner function associated to $f|_U$. Set $d = \deg f|_U$. Then the following statements hold.
\begin{enumerate}[\rm (a)]
\item If $ d< \infty$, then 
\[
\IA(U) = \IA(\infty, U) \cup \IA({\rm wrfp}, U)
\]
and $\IA(U)$ has exactly $D$ elements, where $D$ is the number of fixed points of $g$ in $\bdd$. Moreover, $D \ge 1$ $($unless $U$ is an invariant Siegel disc$)$ and $d - 1 \le D \le d + 1$.  
\item
If $d = \infty$ and $f|_U$ is singularly nice, then the set $\IA(\infty, U) \cup \IA({\rm wrfp}, U)$ is infinite. In particular, if $U$ has only finitely many invariant accesses to infinity, then $f$ has a weakly repelling fixed point in $\bd U$ accessible from $U$.

\item If $U$ is bounded, then $f$ has a weakly repelling fixed point in $\bd U$ accessible from~$U$ or $U$ is an invariant Siegel disc.
\end{enumerate}
\end{thmB}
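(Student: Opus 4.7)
For part~(a), my first step would be to observe that $d<\infty$ forces the inner function $g:\D\to\D$ to be a finite Blaschke product of degree $d$, and hence to extend by Schwarz reflection to a rational self-map of $\widehat\C$ of degree $d$. In particular every boundary fixed point of $g$ has finite angular derivative, i.e.\ is regular. Parts~(a) and~(b) of Theorem~A then combine to give a bijection between $\IA(U)$ and the boundary fixed points of $g$ (injectivity in each direction via the Correspondence Theorem), and to identify the radial limit at each such fixed point as $\infty$ or a weakly repelling fixed point of $f$; this yields both the splitting $\IA(U)=\IA(\infty,U)\cup\IA({\rm wrfp},U)$ and the equality $|\IA(U)|=D$. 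To count $D$, I would clear denominators in $g(z)=z$ to read off $d+1$ fixed points on $\widehat\C$ counted with multiplicity; the Blaschke symmetry $g(1/\bar z)=1/\overline{g(z)}$ pairs fixed points off $\bdd$ as $\{z,1/\bar z\}$, and Schwarz--Pick permits at most one such pair, giving $d-1\le D\le d+1$. Finally, $D\ge 1$ unless $U$ is a Siegel disc follows from the Denjoy--Wolff dichotomy: the Denjoy--Wolff point of $g$ is either on $\bdd$ (hence already a boundary fixed point), or is an interior point, in which case either $g$ is an elliptic automorphism with $d=1$ (the Siegel case) or $d\ge 2$ and the above count already gives $D\ge d-1\ge 1$.

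For part~(b), the plan is to produce infinitely many regular boundary fixed points of $g$ clustering at the singularity $\zeta^*$ and then invoke Theorem~A(b). By hypothesis the angular derivative of $g$ is finite at every point of some punctured neighbourhood $V\subset\bdd$ of $\zeta^*$, so radial limits exist and $g$ behaves regularly along $V$. Since $\zeta^*$ is a non-removable singularity of an inner function, the cluster set of $g$ at $\zeta^*$ is all of $\overline\D$, and inner-function theory then ensures that $\arg g(\zeta)$ has infinite total variation along $V$ as $\zeta\to\zeta^*$. Combining this oscillation with the local regularity of $g$ on $V$, an intermediate value argument applied to $\arg g(\zeta)-\arg\zeta$ produces a sequence $\zeta_n\to\zeta^*$ in $V$ with $g(\zeta_n)=\zeta_n$. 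Each $\zeta_n$ is a regular boundary fixed point, so Theorem~A(b) assigns it an element of $\IA(\infty,U)\cup\IA({\rm wrfp},U)$, and distinct $\zeta_n$ give distinct accesses by the Correspondence Theorem. The ``in particular'' clause is then immediate: if $\IA(\infty,U)$ is finite, $\IA({\rm wrfp},U)$ must be nonempty, supplying an accessible weakly repelling fixed point.

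For part~(c), boundedness of $U$ rules out $\infty\in\bd U$, so $\IA(\infty,U)=\emptyset$; the rest is a reduction to part~(a). I would first show that $d<\infty$: since $\bd U$ is compact in $\C$ and the poles of $f$ are isolated, only finitely many poles lie on $\bd U$, so $f$ is meromorphic on a neighbourhood of $\overline U$; if $w\in U$ and a sequence $z_n\in f^{-1}(w)\cap U$ accumulated at some $z^*\in\bd U$, then either $z^*$ is a pole (forcing $w=\infty$) or $f(z^*)\in\bd U$ equals $w\in U$, both impossible, so $f|_U:U\to U$ is proper and of finite degree. Part~(a) then yields either a Siegel disc or $\IA(U)=\IA({\rm wrfp},U)\neq\emptyset$, giving an accessible weakly repelling fixed point of $f$ in $\bd U$. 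The step I expect to be the most delicate is the oscillation argument in~(b): turning the inner-function singularity at $\zeta^*$ into an infinite sequence of boundary fixed points of $g$ requires careful control of boundary behaviour of inner functions, most likely via a Frostman/L\"owner-type description rather than a bare intermediate value theorem.
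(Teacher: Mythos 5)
Parts (a) and (c) of your proposal track the paper's argument closely. In~(a) you extend $g$ to a finite Blaschke product, invoke Theorem~A together with the Correspondence Theorem to get the bijection between $\IA(U)$ and boundary fixed points of $g$, and count fixed points via reflection symmetry and the uniqueness of the interior Denjoy--Wolff point, whereas the paper runs through the elliptic/hyperbolic/simply-parabolic/doubly-parabolic trichotomy; the arithmetic is the same, though to deduce $D\ge d-1$ from the count-with-multiplicity you should record explicitly that boundary repelling fixed points of a Blaschke product are simple and that the single possible non-repelling boundary fixed point (the Denjoy--Wolff point) has multiplicity at most~$3$. In~(c) your direct properness argument showing $d<\infty$ when $U$ is bounded is correct and replaces the paper's appeal to Proposition~\ref{degree}; both lead to the same reduction to part~(a).

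The genuine gap is in part~(b). You propose to manufacture infinitely many boundary fixed points of $g$ by an oscillation/intermediate-value argument along a punctured arc $V$ around the singularity $\zeta^*$, resting on two unestablished claims: that finite angular derivative at each point of $V$ forces the radial boundary function $g^*$ to be continuous on $V$, and that $\arg g^*$ has infinite total variation as $\zeta\to\zeta^*$. Neither follows from what you have in hand --- pointwise finiteness of the angular derivative does not by itself give continuity of $g^*$ on an arc, and quantifying the winding of $\arg g^*$ near a singularity requires real work on the Blaschke/singular-measure decomposition of $g$, as you yourself flag. The paper avoids this entirely by quoting Theorem~\ref{thm:bargmann}(a) (Bargmann's Theorem~2.32): every singularity of an inner function is an accumulation point of \emph{boundary repelling fixed points}. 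Combined with the singularly-nice hypothesis, which guarantees finite angular derivative at points of $V\setminus\{\zeta^*\}$ and hence makes those fixed points regular, this immediately hands you infinitely many regular boundary fixed points in $V$, and then Theorem~A(b) plus the Correspondence Theorem finish. Making your oscillation argument rigorous would amount to re-proving that theorem; the intended (and much shorter) route is to cite it.
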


\begin{rem*}
By Theorem~B, if $d = \infty$, $U$ has only finitely many invariant accesses to infinity and $f$ has only finitely many weakly repelling fixed points in $\partial U$, then at least one of the points must have infinitely many invariant accesses from $U$. However, we do not know of any example where this is the case. On the other hand, the map $f(z)=z+\tan(z)$ provides an example of a singularly nice Baker domain with only one access to infinity and infinitely many weakly repelling (accessible) fixed points of $f$ in $\partial U$ (see Example~\ref{ex1} for details). 
\end{rem*}

Now we apply the above results to the Newton maps. Recall that a Newton map $N$ is Newton's method for finding zeroes of an entire function $F$, 
\[                                                                                                                                                                                    N(z) = z - \frac{F(z)}{F'(z)}.                                                                                                                                                                                      \]
This class is of particular interest in this context, since for  Newton maps, all Fatou components are simply connected, as proved in \cite{bfjknew,bfjk,shishikura}. Moreover, all fixed points of a Newton map are attracting (except for the repelling fixed point $\infty$ for rational Newton maps). Hence, Newton maps have no invariant parabolic basins or Siegel discs and no finite fixed points in their Julia sets. In particular there are no finite weakly repelling fixed points. This fact together with Theorem~B immediately implies the following corollary.

\begin{corC}
Let $N:\C \to \clC$ be the Newton's method for an entire function and let $U \subset \C$ be an invariant Fatou component of $N$. Let $\varphi:\D\to U$ be a Riemann map and $g= \varphi^{-1} \circ N \circ \varphi:\D\to\D$ the inner function associated to $N|_U$. Set $d= \deg N|_U$. Then the following statements hold.
\begin{enumerate}[\rm (a)]
\item If $ d< \infty$, then there are no invariant accesses from $U$ to points $v \in \bd U \cap \C$ and exactly $D$ invariant accesses from $U$ to infinity, where $D$ is the number of fixed points of $g$ in $\bdd$. Moreover, $D \ge 1$ and $d - 1 \le D \le d + 1$.  
\item If $d= \infty$ and $N|_U$ is singularly nice, then there are infinitely many invariant accesses to infinity from $U$. 
\item $U$ is unbounded.

\end{enumerate}
\end{corC}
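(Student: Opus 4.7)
My plan is to derive each part of the corollary directly from the corresponding part of Theorem~B, after recording two elementary structural facts about Newton maps that eliminate the alternatives appearing on the right-hand side of that theorem.

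First I would note the two inputs. Any finite fixed point $z_0$ of $N$ satisfies $F(z_0)=0$, and a short computation gives $N'(z_0)=1-1/k$, where $k$ is the multiplicity of the zero, so $0\le N'(z_0)<1$ and $z_0$ is (super)attracting; hence $N$ has no finite weakly repelling fixed point, and in the notation of Theorem~B one has $\IA(\mathrm{wrfp},U)=\emptyset$. Moreover, for a rational Newton map the point at infinity is repelling (with multiplier $d/(d-1)$ for a degree-$d$ polynomial), and for a transcendental Newton map $\infty$ is an essential singularity rather than a fixed point; combined with the fact that every finite fixed point of $N$ is attracting, this shows that $N$ has no irrationally indifferent fixed points, and hence no Siegel discs, so in particular $U$ is not an invariant Siegel disc.

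With these two inputs in hand, the three parts follow mechanically. For (a), Theorem~B(a) gives $\IA(U)=\IA(\infty,U)\cup\IA(\mathrm{wrfp},U)$ of cardinality $D$ with $d-1\le D\le d+1$, and $D\ge1$ unless $U$ is an invariant Siegel disc; since $\IA(\mathrm{wrfp},U)=\emptyset$ and $U$ is not a Siegel disc, all $D\ge 1$ invariant accesses land at infinity and none at a finite boundary point. For (b), Theorem~B(b) asserts that $\IA(\infty,U)\cup\IA(\mathrm{wrfp},U)$ is infinite, and the second summand is empty, so $\IA(\infty,U)$ itself is infinite. For (c), if $U$ were bounded, then $\partial U$ would be a compact subset of $\C$, so the weakly repelling fixed point supplied by Theorem~B(c) would be finite, contradicting the first input; the Siegel disc alternative is also excluded, so $U$ must be unbounded.

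There is no genuine obstacle here: Theorem~B does all the dynamical work, and what remains is the standard computation of $N'$ at fixed points together with a bookkeeping remark about the status of $\infty$ for rational versus transcendental Newton maps.
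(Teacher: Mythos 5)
Your proposal is correct and follows exactly the same route as the paper: the paper observes that all finite fixed points of a Newton map are (super)attracting, hence there are no finite weakly repelling fixed points and no invariant Siegel discs, and then states that "this fact together with Theorem~B immediately implies" Corollary~C. You have simply supplied the small computation $N'(z_0)=1-1/k$ and spelled out the bookkeeping that the paper leaves implicit.
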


See Section~\ref{sec:examples} for several illustrating examples.

\begin{rem*} The statement~(c) was previously proved in \cite{mayer}. 
Note that Corollary~C shows that infinity is accessible from an invariant Fatou component $U$ of a Newton map $N$, unless $\deg N|_U$ is infinite and $N|_U$ is not singularly nice. It remains an open question, whether infinity is always accessible from $U$. 
\end{rem*}

In the case when a Newton map has a completely invariant Fatou component, much more can be said about accesses in the remaining invariant components. 

\begin{thmD}
Let $N:\C \to \clC$ be a Newton map with a completely invariant Fatou component $V \subset \C$ and let $U \subset \C$, $U \neq V$ be an invariant Fatou component of $f$. Set $d=\deg N|_U$. Then 
\[
d  \in \{1, 2, \infty\}                                                                                                                                                                                                                                                                                 \]
and for every $v \in \bd U$ there is at most one access to $v$ from $U$. Moreover,
\begin{enumerate}[\rm (a)]
\item if $d \in \{1, 2\}$, then $U$ has a unique access $\A$ to $\infty$ and $\A$ is invariant,
\item if $d = 1$, then $\bd U$ does not contain a pole of $N$ accessible from $U$,
\item if $d = 2$, then $\bd U$ contains exactly one accessible pole of $N$.
\end{enumerate}
\end{thmD}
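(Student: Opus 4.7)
The plan is to leverage complete invariance of $V$ to obtain strong topological control on $\bd U$, and then combine it with Theorem~B and Corollary~C.

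Because $V$ is completely invariant, $\bd V = \calj(N)$, so $\bd U \subset \bd V = \calj(N)$ for every Fatou component $U \neq V$. The first, key step is to show that \emph{each $v \in \bd U$ has at most one access from $U$}. Suppose, for contradiction, that two distinct accesses to $v$ are represented by simple curves $\gamma_1, \gamma_2 \subset U \cup \{v\}$ from $z_0$ to $v$, disjoint apart from endpoints. Then $\Gamma = \gamma_1 \cup \gamma_2$ is a Jordan curve dividing $\clC$ into two domains $\Omega_1, \Omega_2$. Since $V$ is connected and does not meet $\Gamma$, we may assume $V \subset \Omega_1$; openness of $V$ then forces $\bd V \cap \Omega_2 = \emptyset$, and hence $\bd U \setminus \{v\} \subset \Omega_1$. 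It follows that $\bar U \cap \Omega_2 = U \cap \Omega_2$ is clopen in the connected set $\Omega_2$, so either $\Omega_2 \subset U$ or $\Omega_2 \cap U = \emptyset$. The first case would make $\gamma_1, \gamma_2$ homotopic rel endpoints through the disc $\bar \Omega_2 \subset U \cup \{v\}$, contradicting distinctness of accesses. In the second, $\Omega_2 \subset \mathcal{F}(N) \setminus (V \cup U)$ lies in a Fatou component $W$; since $W$ misses $\Gamma \setminus \{v\} \subset U$ and $v \in \calj(N)$, connectedness forces $W = \Omega_2$, whence $\bd W = \Gamma$, contradicting $\bd W \subset \calj(N)$ while $\Gamma \setminus \{v\} \subset \mathcal{F}(N)$.

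With boundary uniqueness in hand, the statements on $d$ and part~(a) follow quickly. Newton maps have no finite weakly repelling fixed points and their attracting/superattracting fixed points lie in $\mathcal{F}(N)$, not in $\bd U$; by Theorem~A every invariant access from $U$ goes to $\infty$, and by uniqueness $|\IA(U)| \le 1$, i.e., $D \le 1$ in the notation of Theorem~B. If $d < \infty$, Theorem~B gives $d - 1 \le D \le 1$, so $d \in \{1, 2\}$; otherwise $d = \infty$. For $d \in \{1, 2\}$, Corollary~C(a) yields $D \ge 1$, hence $D = 1$, giving exactly one invariant access, which goes to $\infty$; boundary uniqueness promotes this to the unique access to $\infty$, proving~(a).

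For (b) and (c), $g$ is a finite Blaschke product of degree $d$, hence extends analytically to $\clC$. Let $\zeta_\infty \in \bdd$ be the boundary fixed point of $g$ corresponding via the Correspondence Theorem to the access to $\infty$. Any accessible pole $p \in \bd U$ yields a prime end $\zeta' \in \bdd$ with $\varphi$-radial limit $p$, and the functional equation $\varphi \circ g = N \circ \varphi$ forces $g(\zeta') = \zeta_\infty$. For $d = 1$, $g$ is a M\"obius map and $\zeta_\infty$ is its own only preimage, so no such $\zeta'$ exists, proving~(b). For $d = 2$ the Blaschke product $g$ has no critical points on $\bdd$, so $\zeta_\infty$ has exactly one other preimage $\zeta' \ne \zeta_\infty$. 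Since $g'(\zeta') \ne 0$, the curve $g(r\zeta')$ approaches $\zeta_\infty$ non-tangentially, and Lindel\"of's theorem yields $N(\varphi(r\zeta')) = \varphi(g(r\zeta')) \to \infty$; the radial cluster set of $\varphi$ at $\zeta'$ is therefore a continuum contained in $N^{-1}(\infty) \cup \{\infty\}$, and it must exclude $\infty$ (else $\zeta'$ would be a second prime end with radial limit $\infty$, contradicting boundary uniqueness), so it collapses to a single pole $p$. Uniqueness of the accessible pole follows because any other would provide a third preimage of $\zeta_\infty$ under $g$, which has only two.

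The main obstacle will be the topological uniqueness of accesses per boundary point, which requires careful handling of the clopen alternative for $U \cap \Omega_2$ and makes essential use of $\bd V = \calj(N)$. The Lindel\"of/cluster-set identification of the pole in part~(c) is delicate but follows standard techniques for finite Blaschke products.
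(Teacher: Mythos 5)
Your proof follows the same architecture as the paper's: a Jordan curve argument exploiting complete invariance of $V$ (so that $\mathcal{J}(N)=\bd V$, hence $\bd U\subset\bd V$ lies on one side of any Jordan curve through $U$) gives the at-most-one-access claim; Corollary~C then pins down $d$ and the unique invariant access to $\infty$; and the fiber $g^{-1}(\zeta_\infty)$ of the finite Blaschke product is used to count accessible poles in parts~(b) and~(c). The one structural difference is that your uniqueness argument re-derives the relevant half of Lemma~\ref{lem:no_boundary1} through the clopen dichotomy for $U\cap\Omega_2$ and the Fatou-component case analysis, whereas the paper simply cites Lemma~\ref{lem:no_boundary1}(b) — both complementary components of $\gamma_0\cup\gamma_1$ meet $\bd U$ — which already contradicts $\bd U\subset\bd V\subset\overline{\Omega_1}$. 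Your version is self-contained but longer.

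There is a small logical gap in your part~(c): you write ``it must exclude $\infty$ (else $\zeta'$ would be a second prime end with radial limit $\infty$)'' before having established that the radial cluster set is a singleton. Merely having $\infty$ as one of the points of the cluster set does not produce a radial limit. The correct order is to first observe that $N^{-1}(\infty)\cup\{\infty\}$ is totally disconnected in $\clC$ (poles are isolated, and $\infty$ is at most their accumulation point), so any continuum inside it is a single point; the dichotomy ``that point is $\infty$ or a pole'' is then what you rule one side of by uniqueness of the access to $\infty$. With this reordering the argument closes. Note that the paper's version of~(c), which pulls back a curve landing at $\infty$ through $g$ and asserts the image lands at $\infty$ or a pole, relies on the same totally-disconnected observation, so this is a local fix rather than a flaw in your approach.
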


Note that an invariant Fatou component of a transcendental Newton map can have accesses to infinity which are not invariant, as shown by the example in Figure~\ref{fig:bd}. This is an important difference with respect to rational Newton maps, where all accesses to infinity from the attracting basins are (strongly) invariant  \cite{hubbardschleicher}. 
This phenomenon is related to the existence of accessible poles of $f$ in the boundary of the component, as shown by the following theorem for general meromorphic maps.

\begin{thmE} 
Let $f:\C\to\chat$ be a meromorphic map and $U \subset \C$  a simply connected invariant Fatou component, such that infinity is accessible from $U$. Set $d=\deg f|_U$.
\begin{enumerate}[\rm (a)]
\item If $1 < d < \infty$ and $\partial U \cap \C$ contains no poles of $f$ accessible from $U$, then $U$ has infinitely many accesses to infinity, from which at most $d+1$ are invariant.
\item If $d = \infty$ and $\partial U$ contains only finitely many poles of $f$ accessible from $U$, then $U$ has infinitely many accesses to infinity. 
\end{enumerate}
\end{thmE}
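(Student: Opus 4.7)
The common strategy I would pursue in both parts is to lift a single curve landing at $\infty$ through $f$ and count the resulting accesses from $U$ to points $v$ with $f(v)=\infty$ (so $v$ is $\infty$ or a pole of $f$ accessible from $U$).

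I would begin by fixing $\gamma\colon[0,1)\to U$ with $\gamma(0)=z_0=\varphi(0)$ landing at $\infty$ (which exists by hypothesis), and, after a small perturbation, I would arrange that $\gamma$ avoids the countable set of critical values of $f$ in $U$. For every $z'\in f^{-1}(z_0)\cap U$, there is then a unique lift $\tilde\gamma_{z'}$ of $\gamma$ under $f$ starting at $z'$, and these lifts are disjoint. Each lift stays in $U$ because $F(f)$ is completely invariant and $\tilde\gamma_{z'}\subset f^{-1}(F(f))=F(f)$ is a connected curve starting in the component $U$. The key observation is that the tail of $\tilde\gamma_{z'}$ lies in $f^{-1}(\{|w|>R\})$ for every $R$, and the components of this open set are one unbounded neighborhood of $\infty$ together with one punctured neighborhood of each pole of $f$ in $\C$. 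By continuity $\tilde\gamma_{z'}$ is eventually trapped in a single component, so it lands either at $\infty$ or at a single pole, which is then accessible from $U$.

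For part (a) the inner function $g$ is a finite Blaschke product of degree $d>1$, extending to $\bdd$ as a $d$-to-$1$ covering. I would choose $\gamma$ so that $\zeta^*:=\text{landing of }\varphi^{-1}(\gamma)$ avoids the finitely many critical values of $g|_{\bdd}$; then the $d$ lifts of $\gamma$ correspond under $\varphi^{-1}$ to the $d$ $g$-lifts of $\varphi^{-1}(\gamma)$, landing at the $d$ distinct points of $g^{-1}(\zeta^*)\subset\bdd$. Since no pole is accessible, all $d$ lifts land at $\infty$, giving $d$ distinct accesses to $\infty$ via the Correspondence Theorem. Iterating, the backward orbit $\bigcup_n g^{-n}(\zeta^*)$ of $\zeta^*$ under the degree-$d$ covering $g|_{\bdd}$ is infinite (in fact $d^n$ distinct points at depth $n$ for generic $\zeta^*$), producing infinitely many distinct accesses to $\infty$. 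The bound of at most $d+1$ invariant accesses is then immediate from Theorem~B(a).

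For part (b), $d=\infty$ yields infinitely many disjoint lifts of $\gamma$, each landing at $\infty$ or at one of the accessible poles $p_1,\ldots,p_k$. Arguing by contradiction, I would assume there are only $n<\infty$ accesses to $\infty$. Near an accessible pole $p_j$ of order $m_j$, the map $f$ is an $m_j$-to-$1$ branched cover of a punctured neighborhood of $\infty$, so each component of $U\cap\{|w|>R\}$ (there are exactly $n$ such components, corresponding to accesses to $\infty$) contributes at most $m_j$ components of $U\cap D(p_j,\epsilon)$, and accesses from $U$ to $p_j$ are in bijection with such components. Therefore accesses to $p_j$ number at most $m_j n$, and the total number of accesses from $U$ to $\{\infty,p_1,\ldots,p_k\}$ is at most $n(1+\sum m_j)<\infty$. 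The contradiction follows by showing that the infinitely many disjoint lifts produce infinitely many distinct accesses: via $\varphi^{-1}$ they become disjoint $g$-lifts starting at distinct elements of $g^{-1}(0)$, and the Correspondence Theorem identifies distinct accesses with distinct boundary landing points in $\bdd$; the finite-to-one nature of $g$ at each admissible landing point keeps the multiplicities bounded, so infinitely many disjoint lifts force infinitely many landing points.

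The main obstacle is the last step of part (b), namely controlling how many disjoint $g$-lifts can share a common landing point in $\bdd$. This is delicate because the singular set of $g$ on $\bdd$ may be large when $d=\infty$, and a priori infinitely many lifts could accumulate at a single singularity of $g$; the plan is to use the finite local degree of $f$ at each candidate endpoint ($\infty$ or a pole $p_j$) together with the topological decomposition of $f^{-1}$ described above to bound this multiplicity and thereby force infinitely many distinct accesses.
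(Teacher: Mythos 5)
Your part~(a) is essentially the paper's argument: pull a curve landing at $\infty$ backwards through the finite Blaschke product $g$ and use the Correspondence Theorem. The paper obtains the infinitude of $\bigcup_n g^{-n}(\zeta)$ from the density of backward orbits in $\mathcal J(g)\subset\bdd$, while you count $d^n$ preimages at depth $n$ after choosing $\zeta^*$ generically; both work and the difference is cosmetic. No objection there.

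Part~(b) has a genuine gap, and it is precisely the one you flag yourself. Your proposed repair --- ``use the finite local degree of $f$ at each candidate endpoint ($\infty$ or a pole $p_j$)'' --- cannot succeed at the endpoint $\infty$: since $d=\deg f|_U=\infty$, the map $f$ is transcendental and $\infty$ is an essential singularity, so there is no finite local degree there at all. The finite-local-degree argument does control how many disjoint lifts can land at a given pole $p_j$ (at most $m_j$, by degree counting in the branched cover $D(p_j,\epsilon)\setminus\{p_j\}\to\{|w|>R\}$), but once the pole case is disposed of you are left with infinitely many disjoint lifts landing at $\infty$, and nothing prevents all of them from accumulating at a single singularity $\zeta\in\Sing(g)$. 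Near a singularity the inner function $g$ has infinite valence --- by Theorem~\ref{thm:bargmann}(c), $\Sing(g)$ is exactly the set of accumulation points of $g^{-1}(z)$ for almost every $z$ --- so infinitely many disjoint $g$-lifts of $\eta$ can land at the same $\zeta$, yielding a single access to $\infty$ rather than infinitely many. Your intermediate counting (``accesses from $U$ to $p_j$ are in bijection with components of $U\cap D(p_j,\epsilon)$'') is also not correct --- a single such component can support several accesses to $p_j$, or none --- but this is secondary to the $\infty$-endpoint problem.

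The paper avoids the difficulty entirely by not lifting curves at all in part~(b). Instead it invokes Proposition~\ref{poles}, which states that $\Sing(g)\subset\Acc(\Theta)$ where $\Theta$ is the set of $\zeta\in\bdd$ at which $\varphi$ has radial limit $\infty$ or a pole. That proposition rests on Theorem~\ref{thm:bargmann}(b), a result of Baker and Dom\'{\i}nguez on singularities of inner functions: every singularity of $g$ is accumulated by points $\zeta'\in\bdd$ admitting paths $\eta$ with $g(\eta)$ landing in a Stolz angle at a prescribed boundary point. Since $d=\infty$ forces $\Sing(g)\neq\emptyset$, the set $\Theta$ is automatically infinite, and by the Correspondence Theorem one gets infinitely many accesses from $U$ to $\infty$ or to poles. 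Only then does the paper use the finite-local-degree argument at poles (via Lemma~\ref{lem:no_boundary1}), exactly as you intended, to pass from ``infinitely many accesses to $\infty$ or poles'' to ``infinitely many accesses to $\infty$.'' Without some replacement for the Baker--Dom\'{\i}nguez input, the lifting approach does not close in case~(b).
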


The proof of Theorem~E uses the following proposition, which is analogous to \cite[Theorem 1.1]{baker-dominguez}. The proof is contained in Section~\ref{sec:TheoremE}.

\begin{prop}\label{poles}
Let $f:\C\to\chat$ be a meromorphic map and $U \subset \C$ a simply connected invariant Fatou component. Let $\varphi:\D\to U$ be a Riemann map and $g= \varphi^{-1} \circ f \circ \varphi:\D\to\D$ the  inner function associated to $f|_U$. Suppose $\infty$ is accessible from $U$ and let 
\[
\Theta = \{\zeta \in\partial \D \mid  \text{the radial limit of $\varphi$ at $\zeta$ is  $\infty$ or a pole of $f$}\}.
\]
Then 
\[
\Sing(g) \subset \Acc(\Theta),
\]
where $\Sing(g)$ denotes the set of singularities of $g$ and $\Acc(\Theta)$ is the set of accumulation points of $\Theta$.
\end{prop}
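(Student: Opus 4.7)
I argue the contrapositive: suppose $\zeta^* \in \partial\D \setminus \Acc(\Theta)$ and show $\zeta^* \notin \Sing(g)$. Choose an open arc $J \subset \partial\D$ containing $\zeta^*$ with $J \cap \Theta \subset \{\zeta^*\}$, so that for every $\zeta \in J \setminus \{\zeta^*\}$ where the radial limit $\varphi^*(\zeta)$ exists (a.e.\ by Fatou's theorem), $\varphi^*(\zeta)$ is a finite point of $\partial U$ which is not a pole of $f$.

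The central tool is the semi-conjugacy $f \circ \varphi = \varphi \circ g$. Since $\infty$ is accessible from $U$, the Correspondence Theorem supplies $\zeta_0 \in \partial\D$ with $\varphi^*(\zeta_0) = \infty$. For a.e.\ $\zeta \in \partial\D$ at which $g^*(\zeta) = \zeta_0$, applying Lindel\"of's theorem to the identity $f(\varphi(r\zeta)) = \varphi(g(r\zeta))$ as $r \to 1^-$ forces $f(\varphi^*(\zeta)) = \infty$, and hence $\varphi^*(\zeta)$ is either $\infty$ or a pole of $f$, i.e.\ $\zeta \in \Theta$. It therefore suffices to produce, for every neighbourhood $V \subset \partial\D$ of $\zeta^*$, some $\alpha \in \Theta$ such that $(g^*)^{-1}(\alpha) \cap V$ has positive linear measure on $\partial\D$ --- for then $\Theta \cap V$ has positive measure and in particular is infinite, contradicting $\zeta^* \notin \Acc(\Theta)$.

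For the required preimage accumulation I invoke Frostman's theorem: for every $\alpha \in \partial\D$ outside an exceptional set of logarithmic capacity zero, the Frostman shift $h_\alpha := (\alpha - g)/(1 - \bar{\alpha} g)$ is a Blaschke product with zeros exactly at $g^{-1}(\alpha) \cap \D$. These zeros accumulate at every singularity of $h_\alpha$, which for such good $\alpha$ coincides with $\Sing(g)$, and a classical cluster-set analysis then yields positive linear measure of $(g^*)^{-1}(\alpha) \cap V$ for every neighbourhood $V$ of $\zeta^*$. The main obstacle is that our distinguished $\zeta_0$ may itself belong to the Frostman exceptional set, so the substitution $\alpha = \zeta_0$ is not immediately allowed. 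This is overcome by first enlarging $\Theta$: using the cluster-set equality $C(g, \zeta^*) = \overline{\D}$ (valid at every singularity of an inner function), one finds an infinite sequence of Blaschke zeros $w_n \to \zeta^*$ with $\varphi(w_n) \in f^{-1}(\varphi(0))\cap U$; since $f$ is locally a finite branched cover away from $\infty$ and its poles, the sequence $\varphi(w_n)$ must accumulate in $\partial U$ only at $\infty$ or at poles of $f$. A topological construction of distinct accesses from $U$ out of such accumulation, in the spirit of \cite{baker-dominguez}, converts this into a sufficiently rich subset of $\Theta$ that one can select a Frostman-generic $\alpha \in \Theta$ within any prescribed neighbourhood. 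Running the Frostman accumulation argument with such an $\alpha$ in place of $\zeta_0$ completes the proof.
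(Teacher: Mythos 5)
Your general strategy (contrapositive, exploit the semi-conjugacy $f\circ\varphi=\varphi\circ g$, find boundary points near $\zeta^*$ that $g$ sends to $\zeta_0$) is sound in outline and is not far from the paper's. But the concrete mechanism you propose for producing such boundary points has a fatal flaw, and the argument you build around it does not recover.

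The central false step is the claim that, for Frostman-generic $\alpha$, the set $(g^*)^{-1}(\alpha)\cap V$ has \emph{positive linear measure}. This cannot happen: if a non-constant bounded analytic function on $\D$ had a constant radial limit on a set of positive Lebesgue measure, it would be identically constant by the Riesz--Privalov uniqueness theorem. So $(g^*)^{-1}(\alpha)$ has measure zero for \emph{every} $\alpha\in\clC$, and the entire measure-theoretic mechanism you lean on collapses. Weakening ``positive measure'' to ``infinite'' or even ``nonempty'' does not save the argument, because nothing in the Frostman/Blaschke-zero analysis you cite produces boundary points with a prescribed \emph{radial} limit; the Blaschke zeros are interior points, and the passage from interior accumulation at $\zeta^*$ to radial boundary preimages of $\alpha$ near $\zeta^*$ is exactly the content that needs proof. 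Moreover, even if you did somehow produce $\zeta\in(g^*)^{-1}(\zeta_0)$ near $\zeta^*$, your preliminary step (that $\varphi^*(\zeta)$ must then be $\infty$ or a pole) relies on $\varphi^*(\zeta)$ existing; since $(g^*)^{-1}(\zeta_0)$ has measure zero, the ``a.e.''~existence guaranteed by Fatou's theorem gives no information on that set, so intersecting two a.e.~statements is not available to you. Finally, the detour through a ``topological construction of distinct accesses in the spirit of Baker--Domínguez'' to escape the Frostman exceptional set is too vague to assess and is, in any case, dangerously close to assuming what is to be proved (that $\Theta$ is rich near $\zeta^*$).

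The paper sidesteps all of this in one stroke. It invokes Theorem~\ref{thm:bargmann}~(b) (the paper's restatement of \cite[Lemma~5]{baker-dominguez}): at a singularity $\zeta^*$ of an inner function $g$ and for any prescribed target $\zeta\in\bdd$, there are points $\zeta'\in\bdd$ accumulating at $\zeta^*$ carrying a curve $\eta\subset\D$ landing at $\zeta'$ with $g(\eta)$ landing at $\zeta$ \emph{in a Stolz angle}. Because of the Stolz angle, the known radial limit $\varphi^*(\zeta)=\infty$ transfers to the limit of $\varphi$ along $g(\eta)$, so $f(\varphi(\eta))=\varphi(g(\eta))\to\infty$; the cluster set of $\varphi(\eta)$ is a continuum contained in $\{\infty\}\cup\{\text{poles}\}$, hence $\varphi(\eta)$ lands at $\infty$ or a pole, and by the Correspondence Theorem $\zeta'\in\Theta$. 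Working with curve landings rather than radial limits is precisely what makes the argument go through: it requires neither a positive-measure preimage (which is impossible) nor a separate existence statement for $\varphi^*(\zeta')$. If you want to rebuild your proof, the point you need to import is exactly this lemma; the Frostman machinery is both insufficient and unnecessary here.
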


The paper is organized as follows. Section~\ref{sec:prelim} contains preliminaries. In Section~\ref{sec:accesses} we include the proof of the Correspondence Theorem, while Theorems~A and~B are proved in Section~\ref{sec:proofs}. Theorems~D and~E and the remaining facts are proved in Section~\ref{sec:TheoremE}. Finally, in Section~\ref{sec:examples}, we apply our results to study a number of examples of meromorphic maps showing a diversity of phenomena related to accesses to boundary points of invariant Fatou components. 

\subsection*{Acknowledgements}

We wish to thank Institut de Matem\`atica de la Universitat de Barce\-lona (IMUB) and the Institute of Mathematics of the Polish Academy of Sciences (IMPAN) for their hospitality while this work was in progress. We are grateful to Lasse Rempe, Phil Rippon and Gwyneth Stallard for interesting questions and comments. 

\section{Preliminaries} \label{sec:prelim}

In this section we state some classical results from complex analysis and topology, which we use in our proofs.

\subsection{Holomorphic maps on the unit disc $\D$ and their boundary behaviour}  

\

\smallskip

For a general exposition on this wide field of research refer e.g.~to \cite{col-loh,garnett,pommerenke-book,shapiro} and the references therein.

Let $U$ be a simply connected domain in $\mathbb C$ and let $\varphi : \D \to U$ be a Riemann map onto $U$. Caratheodory's Theorem (see e.g.~\cite[Theorems~2.1 and~2.6]{pommerenke-book}) states that $\bd U$ is locally connected if and only if $\varphi$ extends continuously to $\overline{\D}$. But even when this is not the case, radial limits of $\varphi$ exist at almost all points of $\bdd$. This is known as Fatou's Theorem (see e.g.~\cite[Theorem 1.3]{pommerenke-book}).

\begin{thm}[{\bf Fatou Theorem}{}] For almost every $\zeta \in \bdd$ there 
exists the \emph{radial limit} 
$$
\lim_{t\to 1^-} \varphi(t\zeta)
$$
of $\varphi$ at $\zeta$. 
Moreover, if we fix $\zeta\in \bdd$ so that this limit exists, then for almost 
every $\zeta'\in \bdd$ the radial limit of $\varphi$ at $\zeta'$ is different from the radial limit at $\zeta$.
\end{thm}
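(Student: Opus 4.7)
My plan is to reduce the first assertion to the classical Fatou theorem for bounded holomorphic functions on $\D$, and then to deduce the second assertion from the Lusin--Privalov uniqueness theorem.

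For the first part, I first note that $U\neq \C$ (otherwise $\varphi^{-1}:\C\to\D$ would be a bounded entire function, contradicting Liouville). Pick any $a \in \C\setminus\overline U$ and define $h := 1/(\varphi - a):\D\to\C$. Then $h$ is bounded and bounded away from $0$ on $\D$, and the radial limit of $\varphi$ at $\zeta$ exists in $\chat$ if and only if the radial limit of $h$ at $\zeta$ exists in $\C$. It therefore suffices to prove the existence of radial limits a.e.\ for bounded holomorphic functions on $\D$. For this I would write $\re h$ and $\imag h$ as Poisson integrals of their $L^\infty$ boundary values (using that bounded harmonic functions on $\D$ are exactly the Poisson integrals of $L^\infty$ data) and then apply the Lebesgue differentiation theorem on $\bdd$, equivalently the weak-type $(1,1)$ bound for the Hardy--Littlewood maximal operator, to conclude that such Poisson integrals have non-tangential limits a.e.\ equal to their boundary values.

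For the second part, suppose for contradiction that for some $v\in\chat$ the set
\[
E = \{\zeta' \in \bdd \mid \lim_{t\to 1^-}\varphi(t\zeta') = v\}
\]
has positive Lebesgue measure. Define a non-constant meromorphic function on $\D$ by $F := \varphi - v$ if $v \in \C$, and by $F := 1/\varphi$ if $v = \infty$. In either case $F$ has radial limit $0$ at every point of $E$; the Lusin--Privalov uniqueness theorem then forces $F\equiv 0$, contradicting the injectivity of $\varphi$. The main obstacle is precisely this second step: while the existence of radial limits is a fairly direct consequence of Poisson-integral representation and Lebesgue differentiation, the sharpness statement rests on the considerably deeper Lusin--Privalov theorem, whose proof invokes harmonic measure together with the a.e.\ existence of radial limits of a harmonic conjugate.
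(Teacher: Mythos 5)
Your reduction of the first assertion to the classical Fatou theorem for bounded holomorphic functions via $h = 1/(\varphi - a)$ with $a\notin\overline U$ is the standard route, and the Poisson-integral/Lebesgue-differentiation argument is correct. One small slip: $h$ is bounded but not, in general, bounded away from $0$ (when $U$ is unbounded, $\varphi$ is unbounded, so $h$ approaches $0$ along the corresponding radii); this is harmless, since boundedness alone is what you actually use.

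For the second assertion there is a genuine gap. The Lusin--Privalov uniqueness theorem, as usually stated, concerns \emph{angular} (non-tangential) boundary values rather than radial ones; indeed Lusin and Privalov themselves produced a non-zero holomorphic function on $\D$ whose radial limit is $0$ at \emph{every} point of $\bdd$, so "radial limit $0$ on a set of positive measure" by itself does not force $F\equiv 0$. You therefore need to upgrade radial limits to angular limits on $E$ before invoking the uniqueness theorem. This can be done here: since $U$ is simply connected and $U\neq\C$, the complement $\chat\setminus U$ is a continuum containing more than two points, so $\varphi$ (hence also $F=\varphi-v$ or $F=1/\varphi$) omits at least three values in $\chat$ and is thus a normal function; the Lehto--Virtanen theorem, recorded in the paper's preliminaries, then promotes each radial limit on $E$ to an angular limit, after which Lusin--Privalov applies. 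Alternatively, and more economically, you can bypass Lusin--Privalov entirely: the Möbius map $w\mapsto 1/(w-a)$ carries $E$ onto the set where the bounded function $h$ has radial limit $1/(v-a)$ (or $0$ when $v=\infty$), and the uniqueness theorem for $H^p$ functions (a consequence of the F.\ and M.\ Riesz theorem, or of $\log|h^*|\in L^1(\bdd)$) applied to $h-1/(v-a)$ gives $|E|=0$ directly. This second route keeps the entire argument within the theory of bounded holomorphic functions you already invoked for the first assertion, which is preferable.
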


\begin{rem*} Although Fatou's Theorem is stated for univalent maps, it is also true for general bounded analytic maps of $\D$, see e.g.~\cite[Section 6.1 and p.~139]{pommerenke-book}.
\end{rem*}
 
 
Another classical result about Riemann maps is the following (see e.g.~\cite[Theorem 2.2]{carlesongamelin}).

\begin{thm}[{\bf Lindel\"of Theorem}{}]
Let $\gamma:[0,1) \to U$ be a curve which lands at a point $v \in \partial 
U$. Then the curve $\varphi\inv(\gamma)$ in $\D$ lands at some point $\zeta \in 
\partial \D$. Moreover, $\varphi$ has the radial limit at $\zeta$ equal to $v$. In 
particular, curves that land at different points in $\partial U$ correspond to curves which land at different points of $\partial \D$.
\end{thm}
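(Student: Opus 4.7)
The plan is to lift the curve via $\varphi^{-1}$ and show the lift must land at a single point of $\partial\D$ whose radial image is $v$; the substance of the proof is ruling out that the lift oscillates around $\partial\D$.

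First I set $\sigma := \varphi^{-1}\circ\gamma : [0,1)\to\D$ and verify that $|\sigma(t)|\to 1$ as $t\to 1^-$. If not, then by compactness we may extract $t_n\to 1^-$ with $\sigma(t_n)\to z_0\in\D$; continuity of $\varphi$ on $\D$ gives $\gamma(t_n)=\varphi(\sigma(t_n))\to\varphi(z_0)\in U$, contradicting $\gamma(t_n)\to v\in\partial U$ and the fact that $U$ is open in $\clC$.

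Next I show $\sigma$ lands at a unique point. Introduce the cluster set
$$C:=\bigcap_{t_0<1}\overline{\sigma([t_0,1))}\subset\bdd,$$
which as a nested intersection of nonempty compact connected sets is nonempty, closed, and connected, hence either a single point or a non-degenerate closed arc of $\bdd$. Suppose for contradiction the latter. Since $\varphi$ is univalent on $\D$ it is in particular a normal meromorphic function. For every interior point $\zeta'$ of the arc $C$, one extracts from the oscillation of $\sigma$ an asymptotic route ending at $\zeta'$ along which $\varphi=\gamma\to v$; the Lindel\"of theorem for normal functions (equivalently the Koebe-based Pommerenke \emph{Prop.~2.14}) then yields that $\varphi$ has angular, and in particular radial, limit $v$ at $\zeta'$. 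This would give a set of positive Lebesgue measure in $\bdd$ on which $\varphi$ attains the radial limit $v$, directly contradicting the second part of Fatou's theorem as stated above. Therefore $C=\{\zeta\}$ is a singleton and $\sigma(t)\to\zeta$, establishing that $\varphi^{-1}(\gamma)$ lands.

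With $\sigma$ now genuinely landing at $\zeta$, applying the same univalent version of Lindel\"of one final time gives the angular limit of $\varphi$ at $\zeta$, which equals $v$; this is the radial-limit claim. The ``in particular'' assertion is then immediate: if $\gamma_1,\gamma_2$ land at $v_1\neq v_2\in\partial U$ but $\varphi^{-1}(\gamma_1)$ and $\varphi^{-1}(\gamma_2)$ both land at the same $\zeta\in\bdd$, the first part of the theorem forces the radial limit of $\varphi$ at $\zeta$ to be simultaneously $v_1$ and $v_2$, which is absurd.

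The main obstacle is the landing step, where a priori $\sigma$ might accumulate on an entire arc of $\bdd$. The decisive analytic inputs are the univalence of $\varphi$ (entering either through Koebe's distortion estimates in the Pommerenke proof, or through the normality needed for Lindel\"of's theorem for asymptotic values) coupled with the uniqueness clause of Fatou's theorem; once these are in place, the remaining arguments are routine topology and bookkeeping.
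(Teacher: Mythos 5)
The paper itself does not prove the Lindel\"of Theorem: it is stated as a classical background result with a reference to Carleson--Gamelin, so there is no ``paper's proof'' to compare against. What follows is therefore an assessment of your proof on its own terms.

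Your overall strategy is the standard one (reduce to showing the cluster set $C$ of $\sigma=\varphi^{-1}\circ\gamma$ on $\bdd$ is a singleton, and rule out the arc case by contradicting the uniqueness clause of Fatou's theorem), and the last step — applying the Lehto--Virtanen theorem once $\sigma$ genuinely lands — is correct. The gap is in the arc case, at the sentence ``one extracts from the oscillation of $\sigma$ an asymptotic route ending at $\zeta'$ along which $\varphi=\gamma\to v$.'' If $C$ is a non-degenerate arc then $\sigma$ oscillates over all of $C$ and lands at no point; every tail $\sigma|_{[t_0,1)}$ still accumulates on the whole of $C$, so there is no sub-curve of $\sigma$ that lands at an interior point $\zeta'$. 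Stitching together fragments of $\sigma$ with radial segments to manufacture a curve landing at $\zeta'$ does not help either, because on the inserted radial pieces you have no control on $\varphi$. Thus there is no candidate curve to which the Lehto--Virtanen theorem can be applied at $\zeta'$, and the step does not go through as written.

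The contradiction with Fatou's uniqueness can still be reached, but by a slightly different route that avoids Lehto--Virtanen entirely in the arc case. Fix $\zeta'$ in the interior of $C$ at which the radial limit $L(\zeta')$ of $\varphi$ exists; by Fatou's theorem this holds for almost every such $\zeta'$, hence on a set of positive measure. Since $|\sigma(t)|\to 1$ and $\sigma$ accumulates on points of $C$ on both sides of $\zeta'$, the curve $\sigma$ is eventually confined to a thin one-sided neighbourhood of the arc $C$ inside an annulus $\{1-\delta<|z|<1\}$, and in travelling back and forth between the two sides of $\zeta'$ it must cross the radial segment $\{s\zeta':1-\delta<s<1\}$ at a sequence of points $s_n\zeta'\to\zeta'$. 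At those crossings $\varphi(s_n\zeta')=\gamma(t_n)\to v$, and since $L(\zeta')$ exists it must equal $v$. This gives radial limit $v$ on a set of positive measure, contradicting Fatou. (The crossing claim itself requires the short topological argument just sketched and should not be taken as obvious.) With this repair your proof is correct; without it, the argument as written has a genuine hole exactly where the work is.
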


The assertion of the Lindel\"of Theorem holds also for non-univalent holomorphic maps on the unit disc, as shown by the Lehto--Virtanen Theorem (see e.g.~\cite{LVActa} or \cite[Section 4.1]{pommerenke-book}). 
\begin{thm}[{\bf Lehto--Virtanen Theorem}{}] 
Let $h: \D \to \clC$ be a normal holomorphic map $($e.g.~let $h$ be a holomorphic  map on $\D$ omitting three values in $\clC)$ and let $\gamma$ be a curve in $\D$ landing at a point $\zeta \in\bdd$. Then, if $h(z) \to v$ as $z \to \zeta$ along $\gamma$, then $h$ has the angular limit at $\zeta$ (i.e. the limit for $z$ tending to $\zeta$ in a Stolz angle at $\zeta$) equal to $v$.  In particular, radial and angular  limits are the same. 
\end{thm}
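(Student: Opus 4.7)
The plan is to argue by contradiction, using Marty's characterisation of normality: a holomorphic map $h:\D\to\clC$ is normal precisely when the spherical derivative with respect to the hyperbolic metric $d_\D$ is uniformly bounded on $\D$, which is equivalent to $h$ being uniformly Lipschitz from $(\D,d_\D)$ to $(\clC,\chi)$, with $\chi$ the spherical metric. This estimate is preserved under pre-composition with M\"obius automorphisms, so any sequence $\{h\circ\phi_n\}$ of such compositions forms a normal family and has subsequences converging locally uniformly in the spherical metric to a holomorphic map into $\clC$.

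After rotating we may assume $\zeta=1$. I would transfer to the upper half-plane $\H$ via a M\"obius map with $1\mapsto\infty$ and $0\mapsto i$, so that $h$ becomes a normal map $\tilde h:\H\to\clC$, the curve $\gamma$ becomes a curve $\Gamma\subset\H$ escaping to $\infty$ with $\tilde h\to v$ along $\Gamma$, and Stolz angles at $1$ become cones $\Sigma_\alpha=\{w\in\H:\arg w\in[\alpha,\pi-\alpha]\}$. Suppose for contradiction that for some $\alpha\in(0,\pi/2)$ there is a sequence $w_n\in\Sigma_\alpha$ with $w_n\to\infty$ and $\chi(\tilde h(w_n),v)\geq\epsilon>0$; extracting, $\tilde h(w_n)\to w^*\ne v$. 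Setting $\lambda_n=|w_n|$, consider the rescalings $H_n(w)=\tilde h(\lambda_n w)$, which are normal on $\H$ because $w\mapsto\lambda_n w$ is an automorphism of $\H$. By Marty a subsequence converges locally uniformly in the spherical metric to a holomorphic $H:\H\to\clC$. The points $\xi_n:=w_n/\lambda_n$ have modulus $1$ and argument in $[\alpha,\pi-\alpha]$, hence lie in a fixed compact subset of $\H$; after extracting once more, $\xi_n\to\xi^*\in\H$ and $H(\xi^*)=w^*\ne v$.

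To reach the contradiction I would show that $H$ also takes the value $v$ at some interior point of $\H$, and then force $H\equiv v$ via the identity principle, contradicting $H(\xi^*)=w^*$. Picking $p_n\in\Gamma$ with $|p_n|=\lambda_n$ gives $\tilde h(p_n)\to v$, so the points $q_n:=p_n/\lambda_n$ on the upper unit semicircle satisfy $H_n(q_n)\to v$. In the \emph{non-tangential} case some subsequence of $q_n$ clusters at a point $q^*$ with $\imag q^*>0$, whence $H(q^*)=v$; together with $H(\xi^*)=w^*\ne v$ and the Lipschitz bound from Marty this forces $H\equiv v$, a contradiction. The main obstacle is the \emph{tangential} case, where $q_n$ accumulates on $\bd\H$ and $\Gamma$ provides no interior value of $H$ after isotropic rescaling. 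I would handle this by replacing the dilations with general automorphisms $\psi_n$ of $\H$ satisfying $\psi_n(i)=p_n$ for a well-chosen $p_n\in\Gamma$, so that $(\tilde h\circ\psi_n)(i)\to v$ by construction; the delicate point is to show that $p_n$ can be selected along $\Gamma$ so that $d_\H(w_n,p_n)$ is bounded along a subsequence, allowing $\psi_n^{-1}(w_n)$ to remain in a compact subset of $\H$ and converge to some $\omega^*\in\H$ with $H(\omega^*)=w^*$. This last step is where the real work lies: one exploits the connectedness of $\Gamma$ and its escape to $\infty$ to produce, on a subsequence, points $p_n\in\Gamma$ whose imaginary parts are comparable to those of $w_n$, so that $d_\H(w_n,p_n)$ stays bounded. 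Granted this, the same connectedness argument as in the non-tangential case closes the proof.
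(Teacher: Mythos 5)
The paper does not prove this theorem: it is stated as a classical fact with a citation to Lehto--Virtanen and to \cite[Section~4.1]{pommerenke-book}, so there is no internal proof to compare against. I will therefore assess your argument on its own.

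The difficulty you flag in the tangential case is a genuine gap, and the repair you sketch cannot work. Take $\tilde h$ normal on $\H$, $\Gamma(t)=t+i$ for $t\ge 0$ (a horizontal ray, which lands tangentially at $\infty$), and $w_n=ni$ on the imaginary axis, which lies in every cone $\Sigma_\alpha$. Using $\cosh \varrho_\H(z_1,z_2)=1+|z_1-z_2|^2/(2\,\imag z_1\,\imag z_2)$ one finds $\varrho_\H(w_n,\Gamma)\ge \varrho_\H(ni,\,\R+i)\sim\log n\to\infty$, monotonically, so there is \emph{no} subsequence along which $\varrho_\H(w_n,p_n)$ stays bounded for $p_n\in\Gamma$; the imaginary parts along $\Gamma$ are identically $1$ and never become comparable to $\imag w_n = n$. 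Consequently, with $\psi_n(i)=p_n\in\Gamma$, the rescalings $H_n=\tilde h\circ\psi_n$ converge locally uniformly to the constant $H\equiv v$ (indeed $H_n\to v$ on a whole horizontal segment through $i$, and the identity principle applies), but the points $\psi_n^{-1}(w_n)$ escape to $\bd\H$, so local uniform convergence says nothing about $\tilde h(w_n)=H_n(\psi_n^{-1}(w_n))$ and no contradiction is reached. A Zalcman-type rescaling only ``sees'' a bounded hyperbolic neighbourhood of the base points, and in the tangential case $\Gamma$ and the Stolz sequence simply drift apart in the hyperbolic metric; no choice of automorphisms $\psi_n$ can bring both into a common compact set.

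The classical proof proceeds by a different mechanism and is not a normal-families limit at the offending points. After normalising $a=0$, Marty/normality is used only to show that a fixed hyperbolic $\rho$-neighbourhood of the tail of $\Gamma$ is contained in the set where $h$ is spherically $\epsilon$-close to $0$, producing a crosscut-like set on which $h$ is small; one then propagates this smallness into a full Stolz angle by a two-constants/Phragm\'en--Lindel\"of (harmonic-measure) estimate, exactly as in Lindel\"of's theorem for bounded functions, with normality supplying the a priori bound that replaces global boundedness. That propagation step is what handles the tangential case, and it is the ingredient your argument is missing and cannot obtain from hyperbolic proximity alone.
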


As noted in the introduction, the concept of angular derivative plays an important role in our discussion.

\begin{defn} Let $h$ be a holomorphic map of $\D$. We say that $h$ has an {\em angular derivative} $a \in \clC$ at a point $\zeta \in \bdd$, if $h$ has a finite radial limit $v$ at $\zeta$ and 
\[
\frac{h(z) - v}{z - \zeta}
\]
has the radial limit at $\zeta$ equal to $a$.
\end{defn}

Now we turn to the special situation where $h:\D\to \D$ is a holomorphic self-map of the unit disc. 
It is a remarkable fact that for such maps, the angular derivative at a boundary point exists (finite or infinite) as soon as the radial limit at that point exists. This is known as the Julia--Wolff Lemma (see e.g.~\cite[Proposition 4.13]{pommerenke-book}).

\begin{thm}[{\bf Julia--Wolff Lemma}{}] \label{jw}
If $h: \D \to \D$ is a holomorphic map which has a radial limit $v$ at a point $\zeta \in \bdd$, then $h$ has an angular derivative at $\zeta$ equal to $a$ such that
\[
0 < \zeta\frac{a}{v} \le +\infty.
\]
In particular, if the radial limit of $h$ at $\zeta$ is equal to $\zeta$, then the angular derivative of $h$ at $\zeta$ is either infinite or a positive real number.
\end{thm}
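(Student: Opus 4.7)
My plan is to deduce the Julia--Wolff Lemma from Julia's Lemma, which in turn is a direct consequence of the Schwarz--Pick invariance of the hyperbolic metric on $\D$. I may assume $|v|=1$, since the conclusion $0 < \zeta a/v \le +\infty$ is only meaningful in this regime.

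First, I would set
\[
\alpha \;=\; \liminf_{r \to 1^-} \frac{1 - |h(r\zeta)|}{1 - r}.
\]
Positivity of $\alpha$ follows from Schwarz--Pick applied with base-point $0$: the triangle inequality $d_{\D}(0, h(r\zeta)) \le d_{\D}(0, h(0)) + d_{\D}(0, r\zeta)$, after exponentiating, gives $(1-|h(r\zeta)|)/(1-r) \ge c$ for some constant $c > 0$ depending only on $|h(0)|$, provided $|h(r\zeta)| \to 1$ (which holds because $|v|=1$). If $\alpha = +\infty$ the conclusion is immediate, so I may assume $\alpha < \infty$.

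Next, I would establish Julia's Lemma,
\[
\frac{|v - h(z)|^2}{1 - |h(z)|^2} \;\le\; \alpha\,\frac{|\zeta - z|^2}{1 - |z|^2} \qquad (z \in \D),
\]
by applying the Schwarz--Pick inequality between $z$ and a sequence $w_n = r_n\zeta$ realizing the liminf, multiplying through by $1 - |h(w_n)|$, and passing to the limit using $h(w_n) \to v$. Geometrically this says that $h$ carries the horodisc at $\zeta$ of parameter $k$ into the horodisc at $v$ of parameter $\alpha k$.

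Finally, I would extract the angular derivative. Using that $|v - h(z)| = |1 - \bar v\, h(z)|$ and $|\zeta - z| = |1 - \bar\zeta\, z|$ (since $|v|=|\zeta|=1$), Julia's inequality rearranges to
\[
\left|\frac{1-\bar v\,h(z)}{1-\bar\zeta\,z}\right|^2 \;\le\; \alpha\cdot\frac{1-|h(z)|^2}{1-|z|^2}.
\]
Inside a Stolz angle at $\zeta$ the right-hand side stays bounded by Schwarz--Pick, yielding a uniform upper bound, while the defining liminf of $\alpha$ along the radius supplies a matching lower bound. A standard geometric argument, exploiting that Stolz angles at $\zeta$ are asymptotically contained in horodiscs tangent to $\bdd$ at $\zeta$, then shows that the nontangential limit of $(1-\bar v\, h(z))/(1-\bar\zeta\, z)$ at $\zeta$ exists and equals $\alpha$. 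The algebraic identity
\[
\frac{h(z) - v}{z - \zeta} \;=\; \frac{v}{\zeta}\cdot\frac{1 - \bar v\,h(z)}{1 - \bar\zeta\,z}
\]
then gives the existence of the angular derivative $a$ satisfying $\zeta a/v = \alpha \in (0, +\infty]$.

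The main obstacle is the last step: Julia's inequality delivers only an upper bound on the invariant quotient, and upgrading this to a genuine nontangential limit requires the geometric input that a Stolz angle at $\zeta$ is squeezed between horodiscs as $z \to \zeta$. Once that extremal argument is carried out, the identification of $a$ reduces to the algebraic identity above.
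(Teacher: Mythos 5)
The paper does not prove the Julia--Wolff Lemma at all; it cites it as classical (Pommerenke's book, Proposition~4.13), so there is no in-paper proof to compare against. Your outline follows the standard route --- Julia's Lemma deduced from Schwarz--Pick, then extraction of the angular derivative --- and the first two stages (positivity of $\alpha$ via the hyperbolic triangle inequality at the base point $0$, and the limiting argument along $r_n\zeta$ to get Julia's inequality) are sound.

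The genuine gap is in the final stage. Julia's inequality, rewritten as
\[
\left|\frac{1-\bar v\,h(z)}{1-\bar\zeta\,z}\right|^2 \le \alpha\cdot\frac{1-|h(z)|^2}{1-|z|^2},
\]
combined with the Stolz-angle bound on $(1-|z|)/|1-\bar\zeta z|$, does give that $\omega(z):=(1-\bar v\,h(z))/(1-\bar\zeta z)$ is bounded in every Stolz angle; but boundedness plus the liminf definition of $\alpha$ is far from establishing that $\omega$ has a \emph{complex} nontangential limit equal to $\alpha$. The geometric observation you invoke --- that Stolz angles at $\zeta$ are eventually contained in every horodisc --- is correct, but it only yields that $h(z)\to v$ nontangentially; it says nothing about the existence of the limit of $\omega$. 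What is actually required is: (i) evaluating Julia's inequality on the radius and comparing with the liminf to conclude $(1-|h(r\zeta)|)/(1-r)\to\alpha$ and then $|1-\bar v\,h(r\zeta)|/(1-|h(r\zeta)|)\to 1$; (ii) a separate argument (using $|1-w|^2=(1-\Re w)^2+(\Im w)^2$ together with $1-\Re w\ge 1-|w|$) to force $\arg(1-\bar v\,h(r\zeta))\to 0$, so that the full complex radial limit $\omega(r\zeta)\to\alpha$ holds, not just the modulus; and (iii) the Lindel\"of/Lehto--Virtanen theorem (already in the paper's toolbox) to upgrade the radial limit to a nontangential one, which is legitimate precisely because $\omega$ is bounded in Stolz angles. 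Alternatively one can finish by a Herglotz-representation argument for the function $f_1-\tfrac{1}{\alpha}f_2$ (where $f_1,f_2$ are the Cayley transforms at $v$ and $\zeta$), whose real part is nonnegative by Julia's inequality. Either way, the final step is a genuine analytic argument, not a routine geometric containment, and should be spelled out rather than delegated to a ``standard geometric argument.''
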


The Denjoy--Wolff Theorem (see e.g.~\cite[Theorem 3.1]{carlesongamelin}) describes the different possibilities for the dynamics of such maps.

\begin{thm}[{\bf Denjoy--Wolff Theorem}{}] 
\label{thm:denjoy}
Let $h: \D \to \D$ be a non-constant
non-M\"obius holomorphic self-map of $\D$. Then there exists a point $\zeta \in \overline{\D}$, called
the \emph{Denjoy--Wolff point} of $h$, such that the iterations $h^n$ tend to $\zeta$  as $n \to \infty$
uniformly on compact subsets of $\D$.
\end{thm}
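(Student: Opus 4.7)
\medskip
\noindent\textbf{Proof proposal.}
The plan is to split into two cases depending on whether $h$ has a fixed point in $\D$, and to apply the Schwarz--Pick lemma together with a normal family argument in each case.

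\emph{Case 1: $h$ has a fixed point $p\in\D$.} First I would conjugate by a M\"obius automorphism $\sigma$ of $\D$ with $\sigma(p)=0$, so that $\tilde h:=\sigma\circ h\circ\sigma^{-1}$ fixes $0$ and is still not a M\"obius automorphism of $\D$. The Schwarz lemma then gives $|\tilde h(z)|\le|z|$ on $\D$, and strict inequality holds everywhere except $0$, for otherwise $\tilde h$ would be a rotation and hence a M\"obius automorphism. In particular $|\tilde h'(0)|<1$, so $0$ is attracting. A Montel/Vitali argument then shows that every subsequential limit $g$ of $\{\tilde h^n\}$ on compacta must satisfy $g(0)=0$ and $|g(z)|\le|z|$, and the strict Schwarz inequality forces $g\equiv 0$; thus $\tilde h^n\to 0$ uniformly on compacta, i.e.\ $h^n\to p$.

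\emph{Case 2: $h$ has no fixed point in $\D$.} Here I would use Wolff's approximation trick: for each $r\in(0,1)$, the map $rh$ sends $\overline{\D}$ into $\{|z|\le r\}\subset\D$, so by Brouwer's fixed point theorem it admits a fixed point $p_r\in\D$. Taking a subsequence $r_n\to 1^-$ with $p_{r_n}\to\zeta\in\overline{\D}$, continuity rules out $\zeta\in\D$ (since $h$ has no interior fixed point) and yields $\zeta\in\bdd$. Julia's lemma, applied to the relation $|p_{r_n}-h(p_{r_n})|=(1-r_n)|h(p_{r_n})|$, should then give, in the limit, that every horocycle $H$ in $\D$ internally tangent to $\bdd$ at $\zeta$ satisfies $h(H)\subset H$. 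Once this horocycle invariance is established, the nested family $\{h^n(K)\}$ for any compact $K\subset\D$ is trapped in a fixed horocycle at $\zeta$ and is simultaneously contracted in the hyperbolic metric by Schwarz--Pick. Since any limit function $g$ of $\{h^n\}$ on compacta would have $g(\D)\subset \overline{H}$ for every horocycle $H$ at $\zeta$, and $\bigcap_H \overline H=\{\zeta\}$, one concludes $g\equiv\zeta$, so $h^n\to\zeta$ uniformly on compacta.

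\emph{Main obstacle and unification.} The delicate part is Case~2: setting up Julia's lemma carefully enough to extract horocycle invariance from the approximating fixed points, and then arguing that the only possible sub-limit of the normal family $\{h^n\}$ is the constant $\zeta$. The non-M\"obius hypothesis is essential for Case~1 (to exclude rotations) but in Case~2 it is used only implicitly, through the absence of interior fixed points combined with the Denjoy--Wolff--Julia contraction. Finally, normality of $\{h^n\}$ throughout follows from Montel's theorem since each $h^n$ maps $\D$ into $\D$, guaranteeing that the convergence statement in either case is on compact subsets.
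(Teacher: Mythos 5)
The paper itself does not prove this statement; it is quoted as a classical result with a reference to Carleson--Gamelin, so there is no in-text proof to compare with. Your Case~1 is fine in outline (the monotonicity $|\tilde h^{n}(z)|\downarrow$ together with the strict Schwarz inequality does force every sublimit to be $\equiv 0$, with a little care). But Case~2 contains a genuine gap at the final step. Horocycle invariance $h(H)\subset H$ only gives you that the orbit $h^n(z)$ remains in the horodisk $H_z$ at $\zeta$ \emph{through} $z$; it does not say the orbit enters arbitrarily small horodisks. Hence for a sublimit $g$ you only get $g(z)\in\overline{H_z}$, not $g(\D)\subset\overline H$ for every horodisk $H$, so the assertion ``$\bigcap_H\overline H=\{\zeta\}$ forces $g\equiv\zeta$'' does not follow as written.

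The standard way to close this gap is to first show that orbits leave every compact subset of $\D$: by Schwarz--Pick, $d_n:=\varrho_\D(h^n(z_0),h^{n+1}(z_0))$ is nonincreasing; if a subsequence $h^{n_k}(z_0)\to w\in\D$ then passing to further limits gives $\varrho_\D(w,h(w))=\varrho_\D(h(w),h^2(w))$, which by strict Schwarz--Pick contraction (here the non-M\"obius hypothesis is used in Case~2 as well, contrary to your claim that it is only ``implicit'') forces $w=h(w)$, contradicting the absence of interior fixed points. Once orbits escape to $\bdd$, any sublimit $g$ satisfies $|g(z)|=1$, so by the open mapping theorem $g$ is constant, $g\equiv c\in\bdd$; then for any horodisk $H$ and $z\in H$ you have $h^{n_k}(z)\in H$, hence $c\in\overline H$, and taking $H$ arbitrarily small yields $c=\zeta$. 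With this insertion your outline becomes a complete and correct proof along the classical Wolff--Denjoy lines.
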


The Denjoy--Wolff point is a special case of a boundary fixed point.
\begin{defn} A point $\zeta \in \bdd$ is a \emph{boundary fixed point} of a holomorphic map $h: \D \to \D$ if the radial limit of $h$ at $\zeta$ exists and is equal to $\zeta$. Such a point is also called \emph{radial} or \emph{angular boundary fixed point}.
\end{defn}

By the Julia--Wolff Lemma, the angular derivative of $h$ exists at every boundary fixed point of $h$. The finiteness of this derivative has a significant influence on the boundary behaviour of $h$ near $\zeta$. 

\begin{defn} \label{radialrepelling}
Let $\zeta \in \bdd$ be a boundary fixed point of $h: \D \to \D$. Then $\zeta$ is called \emph{regular}, if the angular derivative of $h$ at $\zeta$ is finite.
\end{defn}

Boundary fixed points can be classified as follows.

\begin{defn}
We say that $\zeta$ is, respectively, \emph{attracting}, \emph{parabolic}, or \emph{repelling}, if the angular derivative of $h$ at $\zeta$ is, respectively, smaller than $1$, equal to $1$, or greater than $1$ or infinite. 
\end{defn}

The following is the second part of the Denjoy--Wolff Theorem (see e.g.~\cite[Theorem 2.31]{bargmann}).

\begin{thm}\label{DWII}
Let $h:\D\to\D$ be a non-constant
non-M\"obius holomorphic self-map of $\D$ and let $\zeta\in \overline{\D}$ be the 
Denjoy--Wolff point of $h$. Then $\zeta$ is the unique boundary fixed point of $h$ which is attracting or parabolic.
\end{thm}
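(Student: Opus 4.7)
The plan is to prove the statement by exploiting the horocycle invariance that comes from Julia's Lemma at boundary fixed points whose angular derivative does not exceed one, and then confront this invariance with the uniform convergence of iterates provided by the first part of the Denjoy--Wolff Theorem (Theorem~\ref{thm:denjoy}).

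First, I would recall the classical Julia's Lemma in the following form: if $\eta\in\bdd$ is a boundary fixed point of $h$ with angular derivative $\alpha\in(0,+\infty]$, then for every horocycle $H$ at $\eta$ (i.e.~an open disc in $\D$ internally tangent to $\bdd$ at $\eta$), one has $h(H)\subset H_\alpha$, where $H_\alpha$ is the horocycle at $\eta$ whose Euclidean radius is $\alpha$ times that of $H$. In particular, when $\eta$ is attracting or parabolic (so $\alpha\le 1$), every horocycle at $\eta$ is forward invariant: $h(H)\subset H$. This is the only nontrivial analytic input beyond what has already been quoted in Section~\ref{sec:prelim}, and identifying and using it is the conceptual heart of the argument.

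Next, let $\zeta\in\clD$ be the Denjoy--Wolff point, and suppose $\eta\in\bdd$ is a boundary fixed point of $h$ with angular derivative at most $1$. Pick any horocycle $H$ at $\eta$ and any $z\in H$. By the invariance $h(H)\subset H$ established above, the forward orbit $\{h^n(z)\}_{n\ge 0}$ is contained in $H$, and in particular in the compact set $\overline{H}\cap\clD$. By Theorem~\ref{thm:denjoy}, $h^n(z)\to\zeta$, hence $\zeta\in\overline{H}$. Since this holds for every horocycle at $\eta$ and $\bigcap_H\overline{H}=\{\eta\}$, we must have $\zeta=\eta$. In particular, if $\zeta\in\D$, then no boundary fixed point of $h$ can be attracting or parabolic; and if $\zeta\in\bdd$, then $\zeta$ is the only candidate among the boundary fixed points to be attracting or parabolic.

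It remains to verify, in the case $\zeta\in\bdd$, that $\zeta$ itself is indeed attracting or parabolic, i.e.~that its angular derivative is at most $1$. This is the standard half of the Denjoy--Wolff statement that is already implicit in Theorem~\ref{thm:denjoy}: pick any $z_0\in\D$ and any horocycle $H$ at $\zeta$ containing $z_0$; since $h^n(z_0)\to\zeta$ and $h^n(z_0)$ stays in a compact subset of $\D$, one can apply Julia's inequality with $h^n$ to obtain that the angular derivative of $h$ at $\zeta$ is finite and bounded by $1$, or alternatively invoke directly the Julia--Wolff Lemma (Theorem~\ref{jw}) together with the fact that iterates cannot escape to any horocycle $H$ at $\zeta$ of arbitrarily small radius. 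The main potential obstacle is this last verification when $\zeta\in\bdd$: one has to make sure that the angular derivative is really $\le 1$ and not simply that no contradiction occurs, but this is a direct application of Julia's inequality (as used throughout~\cite{pommerenke-book,carlesongamelin}) and requires no new ideas beyond those assembled above.
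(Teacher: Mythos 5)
The paper cites this theorem (from Bargmann, Theorem~2.31) without supplying its own proof, so there is no in-paper argument to compare against. Your strategy---Julia's inequality at a boundary fixed point $\eta$ with angular derivative $\alpha\le 1$ gives forward invariance $h(H)\subset H$ of every horocycle at $\eta$; combined with $h^n(z)\to\zeta$ from Theorem~\ref{thm:denjoy} and the fact that $\bigcap_H\overline{H}=\{\eta\}$, this forces $\zeta=\eta$---is exactly the standard and correct proof of uniqueness, and it correctly covers both the case $\zeta\in\D$ (ruling out attracting/parabolic boundary fixed points altogether) and $\zeta\in\bdd$.

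Two points of care. First, Julia's inequality scales the Julia coefficient $|\eta-z|^2/(1-|z|^2)$ by $\alpha$, not the Euclidean radius of the horocycle; the two are monotonically related, so your conclusion $h(H)\subset H$ when $\alpha\le 1$ stands, but the phrasing is slightly off. Second, in the final verification that $\zeta\in\bdd$ really has angular derivative $\le 1$, the sentence ``$h^n(z_0)\to\zeta$ and $h^n(z_0)$ stays in a compact subset of $\D$'' is self-contradictory, precisely because $\zeta\in\bdd$; the argument as literally written does not go through. The fix is the one you gesture at in your alternative: by the Wolff Boundary Schwarz Lemma (stated in Section~\ref{sec:prelim}), every horocycle $H$ at $\zeta$ satisfies $h(H)\subset H$, equivalently $E(\zeta,h(z))\le E(\zeta,z)$ for all $z\in\D$, where $E(\zeta,z)=|\zeta-z|^2/(1-|z|^2)$ is the Julia coefficient. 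The sharp form of the Julia--Wolff Lemma identifies the angular derivative at $\zeta$ with $\inf_{z\in\D}E(\zeta,h(z))/E(\zeta,z)$, which is therefore $\le 1$. That sharp form is not explicitly quoted in the paper (Theorem~\ref{jw} only gives existence and positivity of the angular derivative), but it is classical and is the right tool to close the argument.
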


A special class of holomorphic self-maps of $\D$ is given by the class of inner functions.
\begin{defn} An \emph{inner function} is a holomorphic function $h:\D\to \D$ such that the radial limit of $h$ at $\zeta$ has modulus $1$ for almost every $\zeta \in \bdd$.
\end{defn}

If $U$ is a simply connected invariant Fatou component of a meromorphic function $f$ and $\varphi:\D \to U$ is a Riemann map, it is easy to check that the map $g:=\varphi^{-1}\circ f \circ \varphi$ is always an inner function, which justifies the terminology used in the introduction. 

\begin{defn} Let $h:\D\to\D$ be an inner function. A point $\zeta \in \partial \D$ is a {\em singularity} of $h$ if 
the map $h$ cannot be continued analytically to any neighbourhood of $\zeta$ in $\C$. The set of all singularities of $h$ is denoted by $\Sing(h)$.
\end{defn}

The following proposition describes the topological degree of a meromorphic map on a simply connected domain (see \cite[Prop. 2.8]{bfr} or \cite{hei57,heins86}). Although in this references it is stated for entire functions, the proof goes through for meromorphic maps as well. 

\begin{prop} \label{degree}
Let $f$ be a meromorphic map, let $U\subset\C$ be a simply connected domain and
   let $\widetilde{U}$ be a component of $f^{-1}(U)$. Then either
\begin{enumerate}[{\normalfont(a)}]
  \item \label{item:inner_proper}
    $f\colon\widetilde{U}\to U$ is
   a proper map $($and hence has finite degree$)$, or 
  \item \label{item:inner_infinite}
    $f^{-1}(z)\cap \widetilde{U}$ is infinite
     for every $z\in U$ with at most one exception. 
\end{enumerate}
In case~\ref{item:inner_infinite}, either $\widetilde{U}$ contains an asymptotic curve corresponding to an asymptotic value in $U$, or $\widetilde{U}$ contains infinitely many critical points.
\end{prop}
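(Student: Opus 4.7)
The plan is to split on whether $f|_{\widetilde U}\colon\widetilde U\to U$ is proper and to invoke a classical Heins-type value-distribution theorem in the non-proper case. I begin by recording that $f|_{\widetilde U}$ is a holomorphic surjection onto $U$: openness of $f(\widetilde U)$ follows from the open mapping theorem, and surjectivity is a standard component argument --- any point of $\partial f(\widetilde U)\cap U$ would lift to $\partial\widetilde U$ with a neighbourhood inside $f^{-1}(U)$, contradicting the maximality of $\widetilde U$. In case~(a), when $f|_{\widetilde U}$ is proper, the classical theory of proper holomorphic maps between Riemann surfaces gives that it is a branched covering of finite degree.

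In case~(b), when $f|_{\widetilde U}$ is not proper, I would invoke the theorem of Heins: a non-constant holomorphic map between Riemann surfaces with hyperbolic target is either a finite branched cover or takes every value in the target infinitely often with at most one exception. Applied to $f|_{\widetilde U}\colon\widetilde U\to U$, with $U$ simply connected and hyperbolic, this yields the claimed conclusion. The single-exception sharpness ultimately traces back to the factorization of inner functions on $\D$ via the lift $g:=\varphi^{-1}\circ f\circ\pi\colon\D\to\D$ (with $\pi$ a universal covering of $\widetilde U$ and $\varphi\colon\D\to U$ a Riemann map): non-properness of $f|_{\widetilde U}$ means $g$ is not a finite Blaschke product, and for such a self-map of $\D$ the M\"obius translate $T_w\circ g$ can fail to be a Blaschke product for at most one value $w$, namely when it reduces to a zero-free singular inner function.

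For the final clause of case~(b), I would argue by contradiction: suppose $\widetilde U$ contains only finitely many critical points $C$ of $f$ and no asymptotic curve of $f$ corresponding to an asymptotic value in $U$. Then $f$ restricts to a local homeomorphism $\widetilde U\setminus C\to U\setminus f(C)$, and the no-asymptotic-curve hypothesis is precisely the statement that every path in $U\setminus f(C)$ lifts all the way, upgrading the local homeomorphism to a covering map. A covering of the simply connected base $U\setminus f(C)$ minus finitely many punctures that extends holomorphically across the punctures must have finite sheet count; this forces $f|_{\widetilde U}$ to be proper, contradicting case~(b).

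The main obstacle is the Heins value-distribution input with the sharp ``at most one exception'' clause, which is a deep classical theorem resting on inner-function theory. A secondary technical point is the bookkeeping in transferring the conclusion from the lift $g\colon\D\to\D$ back to $f|_{\widetilde U}$, since the fibres of $\pi$ are generally infinite; one must check that non-properness of $f|_{\widetilde U}$ forces $g^{-1}(\omega)$ to split across infinitely many orbits of the deck group of $\pi$, so that the infinite preimage set in $\D$ does project to an infinite set in $\widetilde U$ rather than wrapping repeatedly over finitely many points.
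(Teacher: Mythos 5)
The paper does not actually prove this proposition; it cites \cite{bfr} (Prop.~2.8) and Heins' papers \cite{hei57,heins86}, with a one-line remark that the argument for entire functions carries over to the meromorphic case. So there is no ``paper's own proof'' to compare against, and the fair comparison is to the proofs in those references, which are indeed inner-function arguments much like the one you sketch. Your overall plan (dichotomize on properness, lift to $\D$ via a Riemann map or universal cover, use inner-function theory for the value-distribution dichotomy, and use a path-lifting/covering argument for the last clause) is the right one.

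That said, there are genuine gaps. First, the preliminary surjectivity claim is false and the argument for it breaks exactly where it needs to: for $f(z)=e^z$, $U=\D$, $\widetilde U$ is the left half-plane and $0\in U$ is omitted; your ``lift to $\partial\widetilde U$'' step fails when the sequence of preimages escapes to $\infty$, which is precisely the essential singularity. More seriously, your stated ``Heins' theorem'' is not a true statement: the inclusion $\D\hookrightarrow 2\D$ is a non-constant map to a hyperbolic target that is neither a finite branched cover nor does it hit most values even once. What makes the dichotomy work is exactly that $\widetilde U$ is a full component of $f^{-1}(U)$, which forces the lift $g=\varphi^{-1}\circ f\circ\pi:\D\to\D$ to be an inner function (boundary prime ends of $\widetilde U$ other than $\infty$ map into $\partial U$, and $\infty$ has harmonic measure zero); you use this structure implicitly but present the theorem as if it held for arbitrary maps, which hides the heart of the argument. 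Your invocation of ``the M\"obius translate $T_w\circ g$ can fail to be a Blaschke product for at most one value $w$'' is also not a correct statement of Frostman's theorem, which only gives an exceptional set of capacity zero; the sharp ``at most one exception'' requires a separate argument, and this is the hard point. The deck-group bookkeeping you flag (non-properness of $f|_{\widetilde U}$ versus $g$ not being a finite Blaschke product when $\widetilde U$ is multiply connected) is likewise a real gap, not a formality. Finally, in the last clause, the step ``a covering of $U\setminus f(C)$ that extends across the punctures has finite sheet count'' needs justification --- punctured simply connected domains admit infinite coverings, and what one must actually show is that with no asymptotic values and finitely many critical points, lifts of paths in $U$ do not die, hence $f|_{\widetilde U}$ is proper; the pole-avoidance observation (a lift cannot terminate at a pole because the image path would then go to $\infty\notin U$) is what makes the meromorphic extension work and deserves to be stated explicitly.
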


If an inner function $h: \D \to \D$ is a proper map (i.e.~has a finite degree) then $h$ is a finite Blaschke product of the form
\[
h(z)=e^{i\theta}\prod_{k=1}^d  \frac{z-a_k}{1-\overline{a_k}z}
\]
for some $a_1, \ldots, a_d \in \D$, $\theta\in [0,2\pi)$, and therefore $h$ has no singularities and extends to the whole Riemann sphere. Note that since $\D$ is invariant under $h$, the map cannot have critical points on $\partial \D$. 
Moreover, $h$ has $d+1$ 
fixed points on $\clC$, counted with multiplicity and at most one of them is in $\D$. Among the fixed points on $\bdd$, at most one is not repelling, by Theorem~\ref{DWII}.

If $h$ is not proper, then it has infinite degree and has at least one singularity on $\partial \D$. In this case there are infinitely many repelling fixed 
points of $h$ in $\bdd$, as shown by the following theorem, together with other properties of singularities which will be used in our proofs.

\begin{thm}[{\bf Singularities of an inner function}{}] \label{thm:bargmann}
Let $h: \D \to \D$ be an inner function and let $\zeta^* \in \bdd$ be a singularity of $h$. Then the following hold. 
\begin{enumerate}[\rm (a)]
\item  $\zeta^*$ is an accumulation point of the set of  boundary repelling fixed points of $h$. 
\item Given any $\zeta \in \bdd$, the singularity $\zeta^*$ is an accumulation point of the set of points $\zeta' \in \bdd$, for which there is a path $\eta$ in  $\D$ landing at $\zeta'$, such that $h(\eta)$ lands at $\zeta$ in a Stolz angle.
\item $\Sing(h)$ is the set of accumulation points of the set $h^{-1}(z)$ for almost every point $z \in \D$.
\end{enumerate}
\end{thm}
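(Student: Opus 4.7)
The plan is to establish part (c) first via Frostman's theorem, and then use the resulting characterisation of $\Sing(h)$, together with the Lehto--Virtanen theorem and Theorem~\ref{DWII}, to derive (b) and (a).

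For (c), I would invoke Frostman's theorem: for every $a\in\D$ outside an exceptional set $E$ of logarithmic capacity zero, the Frostman shift
\[
h_a(z) = \frac{h(z)-a}{1-\bar a\, h(z)}
\]
is a Blaschke product. For any Blaschke product it is classical that the set of singularities on $\bdd$ coincides with the set of accumulation points on $\bdd$ of the zero sequence. Since $h_a$ and $h$ differ by a M\"obius automorphism of $\D$, they have the same singularity set; and the zeros of $h_a$ are exactly $h^{-1}(a)$. Hence $\Sing(h)=\Acc(h^{-1}(a))$ for every $a\in\D\setminus E$, proving (c).

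For (b), fix $\zeta\in\bdd$ and, for $t$ close to $1$, set $a_t=t\zeta$. Perturbing $t$ slightly if necessary, we may assume $a_t\notin E$, so by (c) the set $h^{-1}(a_t)$ accumulates at $\zeta^*$. Pick $w_t\in h^{-1}(a_t)$ arbitrarily close to $\zeta^*$ and lift the radial tail $\{s\zeta : s\in[t,1)\}$ through $h$ starting at $w_t$; the lift exists since the radial segment avoids the critical values of $h$ for a generic choice of direction, and any obstruction would only force us to break the segment into finitely many pieces before reaching $\bdd$. The resulting curve $\eta_t\subset\D$ either lands at some $\zeta'\in\bdd$ or stays in a compact subset, and a standard length-area argument excludes the latter. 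Since $h(\eta_t)$ eventually coincides with a radial segment ending at $\zeta$, the Lehto--Virtanen theorem applied to the normal map $h$ guarantees that $h(\eta_t)$ lands at $\zeta$ in a Stolz angle. Because $w_t\to\zeta^*$ can be arranged along a subsequence of $t\to1^-$ and the resulting $\zeta'$'s are forced to be distinct (by the Lindel\"of-type statement that curves landing at different boundary points have non-confluent lifts), this gives infinitely many such $\zeta'$ accumulating at $\zeta^*$, proving (b).

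For (a), apply (b) with $\zeta=\zeta^*$ to produce a sequence of points $\zeta'_n\to\zeta^*$ and paths $\eta_n$ landing at $\zeta'_n$ whose $h$-images land at $\zeta^*$ in a Stolz angle. Using these paths as ``markers'' on either side of $\zeta^*$, one sets up an intermediate value argument on small arcs $I_n\subset\bdd$ straddling $\zeta^*$: the Julia--Wolff Lemma turns the angular information on $h\circ\eta_n$ into a statement that the (almost everywhere defined) radial limit map $h^*:\bdd\to\bdd$ maps $I_n$ around $\bdd$ with nonzero winding, and a Brouwer-type fixed point argument then produces a radial boundary fixed point $\zeta_n\in I_n$. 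Shrinking $I_n$ gives $\zeta_n\to\zeta^*$. By Theorem~\ref{DWII}, at most one boundary fixed point of $h$ is non-repelling, so discarding at most one of the $\zeta_n$ yields the required sequence of boundary repelling fixed points accumulating at $\zeta^*$.

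The hard part will be the winding/Brouwer step in (a): making rigorous the claim that $h^*$ wraps nontrivially around $\bdd$ on small arcs near $\zeta^*$ is delicate because $h^*$ is only defined almost everywhere, and one must substitute a genuine continuous object -- for instance, a carefully chosen level curve of $\arg(h)$ emanating from the paths supplied by (b), or a boundary extension of $h$ across the dense set of non-singularities -- before any topological fixed point argument can be applied.
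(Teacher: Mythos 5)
The paper does not prove Theorem~\ref{thm:bargmann} at all: part~(a) is cited as \cite[Theorem~2.32]{bargmann}, part~(b) as \cite[Lemma~5]{baker-dominguez}, and part~(c) is attributed to \cite[Theorems~II.6.1 and~II.6.4]{garnett}. So there is no argument in the paper to compare against; you are attempting to re-derive cited results, and your attempt should be judged on its own merits.

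Part~(c) is essentially correct and is exactly the standard argument: Frostman's theorem produces Blaschke-product Frostman shifts $h_a$ for $a$ outside a capacity-zero exceptional set, the singularity set of a Blaschke product is the derived set of its zeros, Frostman shifts preserve the singularity set, and the zeros of $h_a$ are $h^{-1}(a)$. Since capacity-zero sets are Lebesgue-null, this gives the ``almost every $z$'' statement.

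Parts~(b) and~(a), however, have genuine gaps. In~(b), you start a lift of the radial ray at a preimage $w_t$ close to $\zeta^*$ and then need the resulting landing point $\zeta'$ to also be close to $\zeta^*$. Nothing in your argument controls where the lift terminates: a lift starting near $\zeta^*$ can wander across $\D$ and land far away, and the ``length--area'' assertion only rules out non-landing, not escape from a neighbourhood of $\zeta^*$. You also invoke a ``Lindel\"of-type statement that curves landing at different boundary points have non-confluent lifts'' to conclude the $\zeta'$ are distinct, but this addresses injectivity of endpoints of \emph{images}, not of \emph{lifts}, and in any case distinctness is useless without localisation near $\zeta^*$. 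The entire force of the Baker--Dom{\'\i}nguez lemma is precisely this localisation, which your argument does not supply. In~(a), you yourself flag the winding/Brouwer step as unfinished: the radial limit function $h^*$ is only defined almost everywhere, is not continuous, and carries no degree theory; the substitute ``level curve of $\arg(h)$'' construction you gesture at is exactly the missing content. Since~(a) is built on~(b), both parts remain unproved. Bargmann's actual proof of~(a) proceeds differently, via an analysis of angular derivatives and the boundary Schwarz/Julia inequalities rather than a topological fixed-point argument; if you wish to re-derive~(a) and~(b), you should consult \cite{bargmann} and \cite{baker-dominguez} directly rather than trying to improvise the missing control.
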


The statement~(a) is proved in \cite{bargmann} as Theorem~2.32, while the statement~(b) is \cite[Lemma~5]{baker-dominguez}. The statement~(c) follows from standard facts on inner functions, see e.g.~\cite[Theorems~II.6.1 and~II.6.4]{garnett}. 
 
We end this section by recalling the Wolff Boundary Schwarz Lemma (see e.g. \cite[p. 81]{shapiro} or \cite[Lemma 2.33]{bargmann}).

\begin{thm}[\bf Wolff Boundary Schwarz Lemma{}]
If $h: \D \to \D$ is a holomorphic map with the Denjoy--Wolff point $\zeta \in \bdd$ and $V \subset \D$ is an open disc tangent to $\bdd$ at $\zeta$, then $h(V) \subset V$. If, additionally, $h$ is a non-M\"obius inner function, then  $h(\overline V \setminus \{\zeta\}) \subset V$.
\end{thm}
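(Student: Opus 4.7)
\emph{Strategy.} My plan is to transfer the problem to the upper half-plane $\H$, where horodiscs tangent to $\partial \D$ at $\zeta$ become upper half-planes $\{w \in \H : \Im w > s\}$ and the Denjoy--Wolff point is sent to $\infty$. Fix a Möbius transformation $M : \D \to \H$ with $M(\zeta) = \infty$ and set $\tilde h := M \circ h \circ M^{-1} : \H \to \H$. Under $M$, the horodisc $V$ corresponds to a half-plane $H_s = \{w \in \H : \Im w > s\}$ for some $s > 0$, and $\overline V \setminus \{\zeta\}$ corresponds to the closed half-plane $\overline{H_s} \cap \H = \{w \in \H : \Im w \ge s\}$.

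\emph{Invariance $h(V) \subset V$.} This is the classical Julia inequality applied at the Denjoy--Wolff point. By Theorem~\ref{jw}, $h$ has an angular derivative $\alpha \in (0, +\infty) \cup \{\infty\}$ at $\zeta$. Since $\zeta$ is the Denjoy--Wolff point, i.e., the unique non-repelling boundary fixed point by Theorem~\ref{DWII}, we have $\alpha \le 1$. The Julia inequality then yields
\[
\frac{|\zeta - h(z)|^2}{1 - |h(z)|^2} \le \alpha \, \frac{|\zeta - z|^2}{1 - |z|^2} \le \frac{|\zeta - z|^2}{1 - |z|^2}
\]
for every $z \in \D$. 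A short computation identifies the sublevel sets of $z \mapsto |\zeta - z|^2/(1 - |z|^2)$ with the open discs internally tangent to $\partial \D$ at $\zeta$; in half-plane coordinates these are precisely the $H_s$. Hence $\Im \tilde h(w) \ge \Im w$ on $\H$, the half-plane $H_s$ is forward invariant, and $h(V) \subset V$ follows.

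\emph{Strict inclusion for non-Möbius inner $h$.} Consider the holomorphic function $P(w) := \tilde h(w) - w$ on $\H$. The previous step gives $\Im P(w) \ge 0$, so $P$ maps $\H$ into the closed upper half-plane. The harmonic function $\Im P$ is non-negative on $\H$; if it vanished at a single interior point, the minimum principle would force $\Im P \equiv 0$, and hence $P$ would be a real constant $c \in \R$. But then $\tilde h(w) = w + c$ would be a Möbius transformation and, conjugating back by $M$, so would $h$, contradicting the hypothesis. Therefore $\Im P(w) > 0$ strictly on $\H$, and for every $w \in \overline{H_s} \cap \H$ we obtain $\Im \tilde h(w) > \Im w \ge s$, placing $\tilde h(w)$ in the open half-plane $H_s$. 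Transferring back via $M$ gives $h(\overline V \setminus \{\zeta\}) \subset V$.

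\emph{Main obstacle.} The delicate step is the appeal to the Julia inequality at the Denjoy--Wolff point together with the geometric identification of its sublevel sets as tangent discs; these are standard but depend on the bound $\alpha \le 1$ at $\zeta$, which rests on the Denjoy--Wolff classification of boundary fixed points via Theorem~\ref{DWII}. A byproduct of the $P = \tilde h - \mathrm{id}$ argument is that the ``inner function'' hypothesis in the second assertion is not actually needed — non-Möbius alone suffices — which is a mild strengthening of the stated result.
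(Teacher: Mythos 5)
The paper does not prove this lemma; it is quoted from the references (\cite{shapiro}, \cite{bargmann}), so there is no in-paper argument to compare against. That said, your proof is correct. The conjugation to $\H$, the appeal to Julia's inequality together with the bound $\alpha\le 1$ on the angular derivative at the Denjoy--Wolff point (which follows from Theorem~\ref{DWII}), and the minimum-principle argument via $P=\tilde h-\mathrm{id}$ are all sound; the identification of the sublevel sets of $z\mapsto|\zeta-z|^2/(1-|z|^2)$ with the internally tangent discs (and, under $M$, with the half-planes $H_s$) is a routine computation that checks out. Two remarks. First, your route derives horodisc invariance from the Denjoy--Wolff classification of boundary fixed points, which is the reverse of the usual logical order: the classical proof of Wolff's lemma proceeds via interior fixed points of $z\mapsto h(rz)$ for $r<1$, Schwarz's lemma, and a limit $r\to 1^-$, and is typically a stepping stone \emph{towards} the Denjoy--Wolff theorem rather than a corollary of it. Within the paper's framework, where both the Denjoy--Wolff theorem and the Julia--Wolff Lemma appear as preliminaries, this is harmless, but note that you also silently invoke the full Julia inequality, which is strictly stronger than the version of the Julia--Wolff Lemma quoted as Theorem~\ref{jw} (the latter gives only existence and range of the angular derivative, not the horodisc inequality). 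Second, your observation that ``inner function'' can be relaxed to ``non-M\"obius'' in the second assertion is correct: your argument isolates the failure case as exactly $\tilde h(w)=w+b$ with $b\in\R$, i.e.\ the identity or a parabolic M\"obius automorphism fixing $\zeta$; in fact hyperbolic M\"obius automorphisms with Denjoy--Wolff point on $\bdd$ also satisfy the strict inclusion, so the hypothesis could be weakened still further.
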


\subsection{Topology} 

\ 

\smallskip

\begin{defn} By a \emph{topological arc} we mean a set homeomorphic to the closed interval $[0,1]$, and by a \emph{Jordan curve}  a set homeomorphic to a circle.
\end{defn}

We will frequently use (without noting) the Jordan Theorem stating that a planar Jordan curve separates the plane into two connected components.

\begin{defn}A path-connected topological space $X$ is \emph{simply connected}, if every loop in $X$ is homotopic to a point in $X$. An equivalent condition is that every two curves in $X$ connecting points $x, y \in X$ are homotopic with endpoints fixed.
\end{defn}

The following facts are standard results in planar topology (see e.g.~\cite{kuratowski,whyburn}). 
\begin{thm} \label{thm:simply-con}
Let $V$ be a domain in $\clC$. Then the following statements are 
equivalent.
\begin{itemize}
\item $V$ is simply connected,
\item $\clC\setminus V$ is connected 
\item $\bd V$ is connected.
\end{itemize}
\end{thm}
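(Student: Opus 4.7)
My plan is to prove the three-way equivalence via $(1)\Leftrightarrow(2)$ using winding numbers and $(2)\Leftrightarrow(3)$ using two short topological arguments, the most substantial being an appeal to the unicoherence of $\clC$.

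For $(1)\Rightarrow(2)$, assume $V$ simply connected and suppose for contradiction that $X:=\clC\setminus V=A\sqcup B$ is a disconnection into disjoint nonempty closed sets. After a M\"obius change placing $\infty\in V$, the sets $A,B$ become disjoint compact subsets of $\C$, so $A$ may be enclosed in a smoothly bounded open neighborhood $N$ disjoint from $B$; then $\partial N\subset V$ is a finite union of Jordan curves, at least one of which, say $\gamma$, has winding number $\pm 1$ around a point $a\in A$. Simple connectivity forces $\gamma$ to be null-homotopic in $V$, so its winding number around $a\notin V$ must vanish, a contradiction. Conversely, for $(2)\Rightarrow(1)$, if $X$ is connected, take any closed curve $\gamma\subset V$ and apply a M\"obius transformation placing $\infty\in X$; then $p\mapsto n(\gamma,p)$ is locally constant on $\clC\setminus\gamma\supset X$, vanishes at $\infty$, and is therefore $0$ on the connected set $X$. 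Hence $\gamma$ is null-homologous in $V\subset\C$, which for open planar sets is equivalent to being null-homotopic.

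For $(3)\Rightarrow(2)$, a short clopen argument works: if $X=A\sqcup B$ is a disconnection, connectedness of $\bd V\subset X$ forces $\bd V\subset A$, so $B$ avoids $\bd V$. Separating $A,B$ by disjoint open neighborhoods $U_A,U_B\subset\clC$ yields $B=U_B\cap X=U_B\cap\inter(X)$, which is open in $\clC$; being clopen in the connected space $\clC$, $B$ must be empty, a contradiction.

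The main obstacle is the converse $(2)\Rightarrow(3)$: assuming $X$ connected, one must show $\bd V$ is connected, despite the possibility that $\inter(X)=\clC\setminus\overline{V}$ might bridge a potential disconnection of $\bd V$ through the connected set $X$. I would sidestep this by invoking the unicoherence of $\clC$, a classical theorem in planar topology (available in Kuratowski or Whyburn) stating that whenever $\clC$ is written as the union of two closed connected subsets, their intersection is connected. Applied to the decomposition $\clC=\overline{V}\cup X$, where both $\overline{V}$ (the closure of the domain $V$) and $X$ (by hypothesis) are closed and connected, this yields that $\overline{V}\cap X=\bd V$ is connected. Unicoherence of $\clC$ itself is the deepest topological input to the proof and can be established e.g.~via a Mayer--Vietoris computation in reduced \v{C}ech cohomology, or by a direct continuum-theoretic argument on $S^2$.
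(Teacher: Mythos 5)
The paper does not give a proof of this statement at all: it appears in the ``Topology'' preliminaries with the remark ``The following facts are standard results in planar topology (see e.g.~\cite{kuratowski,whyburn}),'' so there is no in-paper argument to compare yours against. Your decomposition of the three-way equivalence into $(1)\Leftrightarrow(2)$ via winding numbers and $(2)\Leftrightarrow(3)$ via a clopen argument and unicoherence is a reasonable way to supply a full proof, and the $(3)\Rightarrow(2)$ and $(2)\Rightarrow(3)$ steps are correct. (Minor: in $(2)\Rightarrow(1)$, ``null-homologous is equivalent to null-homotopic'' is not true loop-by-loop in a planar domain -- a commutator in $\C\setminus\{0,1\}$ is null-homologous but not null-homotopic; what is true, and what you need, is that if \emph{every} loop in $V$ is null-homologous then $H_1(V)=0$, hence $\pi_1(V)=0$ because the fundamental group of an open planar set is free.)

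There is, however, a genuine gap in $(1)\Rightarrow(2)$. You normalize so that $\infty\in V$, take a Jordan curve $\gamma\subset\partial N\subset V$ with $n(\gamma,a)=\pm1$ for some $a\in A$, and conclude that null-homotopy of $\gamma$ in $V$ forces $n(\gamma,a)=0$. But the winding number around a finite point $a$ is a homotopy invariant only for homotopies that avoid both $a$ \emph{and} $\infty$ (it is an invariant of loops in $\clC\setminus\{a,\infty\}\simeq S^1$, not in the contractible space $\clC\setminus\{a\}$). Since you put $\infty\in V$, the null-homotopy is free to pass through $\infty$, and the winding number can jump: e.g.\ in $V=\clC\setminus\{1\}$ the circle $|z|=2$ has $n=1$ around $1$ yet is null-homotopic in $V$ by sliding it out through $\infty$. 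The fix is easy: normalize instead so that $\infty$ lies in the complement (say $\infty\in B$), which makes $V$ an open subset of $\C$, so the null-homotopy stays in $\C\setminus\{a\}$ and the winding-number contradiction is valid. Alternatively, keep $\infty\in V$ but compare \emph{two} boundary points: with the full cycle $\delta=\partial N$, pick $a\in A$, $b\in B$, note $n(\delta,a)=1$, $n(\delta,b)=0$, and apply $\pi_1$-triviality of $V$ to the map $z\mapsto(z-a)/(z-b)$ into $\C^*$, which yields $n(\delta,a)=n(\delta,b)$, a contradiction. Either repair closes the gap; as written, the step does not follow.
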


\begin{thm}
A path-connected continuum $X \subset \clC$ is simply connected if and only if 
it does not separate the plane.
\end{thm}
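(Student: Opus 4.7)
The plan is to prove both directions, using a duality argument for one and a ``filling in'' construction for the other.

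For the forward direction ($\Rightarrow$), I would assume $X$ is path-connected and simply connected. Since $\pi_1(X) = 0$, every continuous map $\alpha: X \to S^1$ lifts through the universal cover $\R \to S^1$, and is therefore null-homotopic. By the Bruschlinsky-Hopf theorem, $[X, S^1] \cong \check H^1(X; \Z)$ for any compact metric space, so $\check H^1(X; \Z) = 0$. Alexander duality in $\clC \cong S^2$ then yields $\tilde H_0(\clC \setminus X; \Z) \cong \check H^1(X; \Z) = 0$, so $\clC \setminus X$ is connected.

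For the backward direction ($\Leftarrow$), assume $\clC \setminus X$ is connected, and take an arbitrary loop $\gamma: S^1 \to X$. Let $V_0$ be the component of $\clC \setminus \gamma(S^1)$ containing $\infty$, and define the ``filled loop'' $K := \clC \setminus V_0$. The key observation is that $K \subset X$: if a point $p$ in some other component $V \neq V_0$ of $\clC \setminus \gamma(S^1)$ were outside $X$, then $p$ and $\infty$ would both lie in the open connected (hence path-connected) set $\clC \setminus X$, and a path between them would avoid $\gamma(S^1) \subset X$, contradicting that $V$ and $V_0$ are distinct components of $\clC \setminus \gamma(S^1)$. So $\gamma(S^1) \subset K \subset X$. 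Each $\overline{V_i}$ for $i \geq 1$ meets $\gamma(S^1)$, so $K$ is a continuum; its complement $V_0$ is connected, hence a simply connected domain by Theorem~\ref{thm:simply-con}.

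It remains to show that $\gamma$ is null-homotopic in $K$, which will give null-homotopy in $X$. For this I would invoke the classical fact that a non-separating plane continuum is cellular (a theorem of R. L. Moore), i.e., can be written as $K = \bigcap_n D_n$ for a nested sequence of closed topological disks in $\clC$. Combined with a Riemann map $\phi: \D \to V_0$ and the theory of prime ends, one constructs a continuous extension $h: D^2 \to K$ of $\gamma$, yielding the desired null-homotopy. The main obstacle lies precisely in this last step: the Riemann map $\phi$ need not extend continuously to $\overline{\D}$, since $\partial V_0$ may fail to be locally connected, so one must use either R. L. Moore's monotone decomposition theorem for $S^2$ or careful prime-end arguments to bridge the gap. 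This is the content of the cited references \cite{kuratowski,whyburn}, where the argument is carried out in full detail.
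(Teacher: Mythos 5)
The paper states this result without proof, listing it among ``standard results in planar topology'' and citing \cite{kuratowski,whyburn}, so there is no in-paper argument to compare against; I assess your proposal on its own. Both directions have real gaps. In the forward direction, the step ``$\pi_1(X)=0$ implies every map $X\to S^1$ lifts through $\R\to S^1$'' invokes the lifting criterion for covering spaces, which requires the domain to be \emph{locally} path-connected in addition to path-connected; a path-connected planar continuum need not be locally path-connected, and in its absence $\pi_1(X)=0$ does not by itself force $[X,S^1]=0$ (the Warsaw circle, with $\pi_1=0$ but $\check H^1\cong\Z$, illustrates exactly this danger once local connectivity fails, even though it is not itself path-connected). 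To make this route rigorous you would either need to restrict to Peano continua or replace the lifting step with a different argument.

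In the backward direction, the reduction is sound: filling $\gamma$ to get $K=\clC\setminus V_0$ and deducing $K\subset X$ from the connectedness of $\clC\setminus X$ is correct and is the right first move. But you then defer precisely the crux --- that $\gamma$ is null-homotopic inside $K$ --- to an unexecuted appeal to cellularity and prime ends. Cellularity ($K=\bigcap_n D_n$) gives a null-homotopy of $\gamma$ in each enclosing disc $D_n$, not inside $K$ itself, so that observation alone does not close the gap; ``careful prime-end arguments'' is a plan, not a proof. The ingredient you actually want, and which the paper places immediately after this very theorem, is the Torhorst Theorem: since $\gamma(S^1)$ is a Peano continuum, $\bd V_0$ is a locally connected continuum, and hence by Carath\'eodory the Riemann map $\D\to V_0$ extends continuously to $\overline\D$. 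Even with that in hand one still must convert the boundary extension into a continuous map of $\overline\D$ onto $K$ restricting to $\gamma$ on $\bd\D$; this construction is the heart of the matter and is missing. As written, the proposal correctly reduces the problem but does not solve it.
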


We will use the following theorems.

\begin{thm}[{\bf Mazurkiewicz--Moore Theorem} \cite{kuratowski}]
Let $X$ be a complete, locally connected metric space.
Then for every open connected set $V \subset X$ and every $x, y \in V$ there 
exists a topological arc in $V$ joining $x$ and $y$.
\end{thm}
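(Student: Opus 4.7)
The plan is to reduce this to two standard facts: first, that every open connected subset of a complete locally connected metric space is path-connected; second, that any continuous path between two distinct points in a Hausdorff space can be thinned to an arc whose image lies in the original path's image (a classical result of Moore). Combining these, applied to $V$ and to any continuous path from $x$ to $y$ in $V$, produces the desired arc.

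For path-connectedness of $V$, fix $x \in V$ and let $W \subset V$ denote the set of points joinable to $x$ by a continuous path inside $V$. I would show $W$ is nonempty, open, and closed in $V$, hence equal to $V$. Openness is the crux: given $y \in W$, choose a connected open neighborhood $B_0$ of $y$ with $\overline{B_0} \subset V$. For any $z \in B_0$, repeatedly apply local connectedness to build at stage $n$ a chain $U^n_0, \ldots, U^n_{k_n} \subset B_0$ of connected open sets of diameter $< 2^{-n}$ with $y \in U^n_0$, $z \in U^n_{k_n}$, and $U^n_i \cap U^n_{i+1} \neq \emptyset$, arranged so that each chain refines the preceding one. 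Parameterizing this refinement dyadically on $[0,1]$ yields a Cauchy family of approximating maps; completeness of $X$ guarantees that the uniform limit is a well-defined continuous path from $y$ to $z$ inside $V$. Closedness of $W$ in $V$ is handled analogously by joining a nearby element of $W$ to the limit via the same local construction.

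For the arc extraction, given a continuous $f: [0,1] \to V$ with $f(0) = x$, $f(1) = y$, consider the family $\mathcal{C}$ of closed subsets $C \subset [0,1]$ containing $\{0,1\}$ for which $f(C)$ is a continuum joining $x$ and $y$. This family is nonempty, and by compactness of $[0,1]$ together with the fact that a decreasing intersection of continua joining $x$ and $y$ is again such a continuum, $\mathcal{C}$ satisfies the hypotheses of Zorn's Lemma. A minimal element $C_0$ has the property that $f(C_0)$ is irreducible between $x$ and $y$, whence every point of $f(C_0) \setminus \{x, y\}$ is a cut point of $f(C_0)$; by the classical characterization of arcs as the compact connected metric spaces with exactly two non-cut points, $f(C_0)$ is then an arc from $x$ to $y$ lying in $V$.

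The main obstacle is the openness step above: local connectedness does not in general imply local path-connectedness, and completeness of $X$ must be invoked essentially to upgrade the combinatorial chain of connected open sets into a continuous path. Once this local path-connectedness is in place, the remaining arguments are purely topological and rely only on standard properties of Peano continua.
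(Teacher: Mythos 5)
The paper does not prove the Mazurkiewicz--Moore Theorem; it states it and cites Kuratowski's book, so there is no internal proof to compare against. Judged on its own terms, your two-stage plan (show $V$ is path-connected, then thin a path to an arc) is the standard decomposition, and your sketch of the path-connectedness step --- refining chains of small connected open sets inside a fixed $B_0\subset V$ and using completeness to pass to a uniform limit --- is essentially the correct argument. The difficulty you flag (local connectedness does not automatically give local path-connectedness, and completeness is what closes this gap) is indeed the crux, and you handle it correctly.

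The arc-extraction step, however, contains a genuine gap. From the Zorn's Lemma argument you correctly obtain a minimal $C_0$ with $f(C_0)$ an \emph{irreducible} continuum between $x$ and $y$ (the minimality does transfer, via $C'=f^{-1}(K)\cap C_0$ for a putative proper subcontinuum $K$). But the next assertion --- that irreducibility forces every point of $f(C_0)\setminus\{x,y\}$ to be a cut point --- is false in general. The topologist's sine continuum $W=\{(t,\sin 1/t): 0<t\le 1\}\cup(\{0\}\times[-1,1])$ is irreducible between $(1,\sin 1)$ and $(0,0)$, yet no point of the limit segment is a cut point of $W$ (removing any such point leaves the dense sine curve intact, so the complement stays connected). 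More dramatically, the pseudo-arc is irreducible between suitable pairs of points but is hereditarily indecomposable and has no cut points at all. So the Moore two-non-cut-points characterization of arcs cannot be invoked from irreducibility alone; one must first know that $f(C_0)$ is locally connected, and nothing in the construction guarantees that, since $C_0$ is just some compact subset of $[0,1]$ and continuous images of such sets can be arbitrary continua. The clean repair is to bypass the Zorn's Lemma detour entirely: $f([0,1])$ is a Peano continuum by the Hahn--Mazurkiewicz theorem, and Peano continua are arcwise connected (itself a theorem of Mazurkiewicz/Moore, proved for instance by uniform local arcwise connectedness or by the chain-refinement method you already used in step one), which directly yields an arc from $x$ to $y$ inside $f([0,1])\subset V$.
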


\begin{thm}[\bf{Torhorst Theorem {\cite[Theorem 2.2]{whyburn}}}] 
\label{theorem:torhorst}
If $X$ is a locally connected continuum in $\clC$, then
the boundary of every component of $\clC \setminus X$ is a locally connected
continuum.
\end{thm}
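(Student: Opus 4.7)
The plan is to establish two properties of $\partial V$ separately: that it is a continuum, and that it is locally connected. The first is a short application of Theorem~\ref{thm:simply-con}; the second is the substantive part.

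\textbf{Continuum property.} Since $V$ is a component of the open set $\clC\setminus X$, its boundary satisfies $\partial V\subseteq X$ and is in particular compact. To show $\partial V$ is connected it suffices, by Theorem~\ref{thm:simply-con}, to verify that $\clC\setminus V$ is connected. I write
\[
\clC\setminus V \;=\; X\;\cup\;\bigcup_{V_i\neq V}\overline{V_i},
\]
where the $V_i$ range over the remaining components of $\clC\setminus X$. Each continuum $\overline{V_i}$ meets the connected set $X$ (via its nonempty boundary), so the union is connected. Hence $\partial V$ is a continuum.

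\textbf{Local connectedness.} I plan to invoke the standard Property~S characterization: a compact metric space $K$ is locally connected if and only if, for every $\varepsilon>0$, $K$ is a finite union of closed connected subsets of diameter less than $\varepsilon$. Given $\varepsilon>0$, local connectedness of $X$ supplies a finite cover $E_1,\ldots,E_n$ of $X$ by closed connected sets with $\diam E_j<\varepsilon/3$. A standard consequence of local connectedness of $X$ is that only finitely many components $W_1,\ldots,W_k$ of $\clC\setminus X$ have diameter at least $\varepsilon/3$. The idea is to enlarge each $E_j$ to a closed connected set $\widetilde E_j$ by adjoining all ``small'' complementary components whose closures meet $E_j$; each $\widetilde E_j$ then has diameter less than $\varepsilon$. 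Together with the $\overline{W_\ell}$ (for $W_\ell\neq V$), the family $\{\widetilde E_j\}\cup\{\overline{W_\ell}:W_\ell\neq V\}$ covers $\clC\setminus V$, and intersecting with $\partial V$ produces the desired finite cover of $\partial V$ by sets of diameter less than $\varepsilon$.

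\textbf{Main obstacle.} The delicate step is showing that each $\widetilde E_j\cap\partial V$ decomposes into only finitely many connected closed pieces, so that the resulting cover of $\partial V$ witnessing Property~S is indeed finite. This requires a careful topological argument ruling out infinite proliferation of components: informally, one must show that $V$ approaches $\widetilde E_j$ through only finitely many ``windows'', using again the local connectedness of $X$ applied at a finer scale. This bookkeeping constitutes the technical core of Torhorst's classical argument as carried out in Whyburn's book, and is the step I would expect to demand the most care in a detailed write-up.
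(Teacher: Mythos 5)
The paper does not prove this theorem; it is cited as a classical result from Whyburn's book, so there is no internal proof to compare against. Judging your argument on its own terms:

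Your first part, that $\partial V$ is a continuum, is correct and cleanly done: $\partial V\subseteq X$ is compact, and writing $\clC\setminus V=X\cup\bigcup_{V_i\neq V}\overline{V_i}$ with each $\overline{V_i}$ a continuum meeting the connected set $X$ shows $\clC\setminus V$ is connected, whence $\partial V$ is connected by Theorem~\ref{thm:simply-con}.

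The second part has a genuine gap, which you flag but seriously underestimate. The step you defer --- showing each $\widetilde E_j\cap\partial V$ breaks into only finitely many small connected pieces --- is not bookkeeping; it is the entire substance of the theorem, and your setup makes no progress on it. Worse, the enlargement from $E_j$ to $\widetilde E_j$ is ineffective for this purpose: you adjoin closures of complementary components of $\clC\setminus X$, but $\partial V\subseteq X$, and the interiors of those adjoined components are disjoint from $X$, so $\widetilde E_j\cap\partial V=E_j\cap\partial V$. The enlargement changes nothing about what you intersect with $\partial V$. And there is no a priori control on $E_j\cap\partial V$: $X$ being locally connected gives you small connected pieces of $X$, but $V$ can approach a given small piece of $X$ in a complicated pattern, so the trace of that piece on $\partial V$ need not be connected nor have finitely many components. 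The classical argument (Whyburn's, via cyclic element theory, or equivalently the route through Carath\'eodory using the fact that $V$ is simply connected and showing the Riemann map extends continuously) requires genuinely new ideas beyond Property~S for $X$; as written, your proposal reduces the theorem to a claim that is at least as hard as the theorem itself.
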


\section{Accesses and radial limits: proof of the Correspondence Theorem}\label{sec:accesses}

In this section we prove the Correspondence Theorem formulated in the introduction, which describes the relation between the topology of accesses and boundary behaviour of the Riemann map. To that end, we first state some preliminaries. Although some of them are a kind of folklore, we include their proofs for completeness.

For $z_1, z_2 \in \C$, we denote by $[z_1, z_2]$ the straight line segment joining $z_1$ to $z_2$. 

\begin{lem}\label{lem:no_boundary1} Let $U \subset\C$ be a simply connected domain, $z_0\in U$ and $v\in\partial U$. Let $\gamma_0, \gamma_1: [0,1] \to \clC$ be curves 
such that $\gamma_j([0,1)) \subset U$, $\gamma_j(0) = z_0$, $\gamma_j(1) = v$ for $j = 0,1$. Then the following statements hold.
\begin{enumerate}
\item[(a)] $\gamma_0$ and $\gamma_1$ are in the same access to $v$ if and only there is exactly one component of $\clC \setminus (\gamma_0 \cup \gamma_1)$ 
intersecting $\bd U$. 
\item[(b)] $\gamma_0$ and $\gamma_1$ are in different accesses to $v$ if and only 
if there are exactly two components of $\clC \setminus (\gamma_0 \cup \gamma_1)$ 
intersecting $\bd U$. 
\end{enumerate}

\end{lem}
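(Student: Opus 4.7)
The strategy is to reduce to the case where $\gamma_0,\gamma_1$ are simple arcs meeting only at $z_0$ and $v$, so that $K:=\gamma_0\cup\gamma_1$ is a Jordan curve, and then exploit the Jordan Curve Theorem on its two complementary Jordan domains.

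The reduction proceeds in two stages. First, using the classical Hahn-type fact that every continuous curve in a Hausdorff space between two distinct points contains a simple sub-arc with the same endpoints, I would extract simple arcs $\alpha_j\subset\gamma_j$ from $z_0$ to $v$; since $U$ is simply connected, the leftover pieces of $\gamma_j$ form loops in $U$ which are null-homotopic, so $\alpha_j$ lies in the same access as $\gamma_j$. Second, if $\alpha_0$ and $\alpha_1$ still meet at interior points, I would further modify them by the standard cut-and-paste trick, swapping sub-arcs at consecutive common points: the pieces traded off form loops in $U$ and are therefore null-homotopic, preserving both access classes. One must also verify that this replacement does not alter the number or $\bd U$-footprint of the complementary components.

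Now with $\alpha_0,\alpha_1$ simple and meeting only at $\{z_0,v\}$, the set $K=\alpha_0\cup\alpha_1$ is a Jordan curve; by the Jordan Curve Theorem, $\clC\setminus K=D_1\sqcup D_2$ with $\partial D_1=\partial D_2=K$ and each $\overline{D_i}$ homeomorphic to the closed unit disc. The heart of the argument is the equivalence
\[
\alpha_0 \text{ and } \alpha_1 \text{ are in the same access to } v \iff D_i\subset U \text{ for some } i\in\{1,2\}.
\]
For ``$\Leftarrow$'', $D_i\subset U$ together with $K\setminus\{v\}\subset U$ gives $\overline{D_i}\setminus\{v\}\subset U$; pulling back through the disc homeomorphism $\overline{D_i}\cong\overline{\D}$, the arcs $\alpha_0,\alpha_1$ become two boundary arcs with common endpoints, between which a chord homotopy inside the disc pushes forward to a homotopy in $\overline{D_i}$ whose intermediate curves lie in $U$ for parameter $t<1$. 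For ``$\Rightarrow$'', a homotopy $H:[0,1]^2\to U\cup\{v\}$ from $\alpha_0$ to $\alpha_1$ collapses along its two constant edges $\{t=0\}$, $\{t=1\}$ to a continuous map of a topological $2$-disc into $\clC$ whose boundary restriction is $K$; a standard degree/winding-number argument (removing in turn one putative uncovered point from each side of $K$ and invoking the planar fact that a disc map covers the Jordan domain bounded by its Jordan boundary) shows the image must cover at least one of $D_1,D_2$, and since this image lies in $U\cup\{v\}$ and each $D_i$ is open with $v\in\partial D_i$, the covered $D_i$ must lie entirely in $U$.

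Granting this equivalence, both parts follow. If $\alpha_0\sim\alpha_1$, some $D_i\subset U$, and since $\bd U\setminus\{v\}\subset\clC\setminus K$ is disjoint from $U$, it lies entirely in $D_{3-i}$, yielding exactly one $\bd U$-intersecting component and giving part (a). If $\alpha_0\not\sim\alpha_1$, neither $D_i$ is contained in $U$; each $D_i$ contains points of $U$ (from either side of the interior of $K$), so by connectedness each $D_i$ must cross $\bd U$, and both components meet $\bd U$, giving part (b). The degenerate case $\bd U=\{v\}$ forces $U=\C$ and trivializes all accesses, so it does not interfere. The main obstacle I anticipate is the reduction step: verifying carefully that simple-arc replacement preserves both the access class and the combinatorics of the complementary components when $\gamma_0,\gamma_1$ have wild or tangled intersections requires delicate topological bookkeeping; the Jordan-curve step and the degree argument themselves are standard.
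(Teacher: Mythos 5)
Your strategy---reduce to simple arcs $\alpha_0,\alpha_1$ meeting only at $\{z_0,v\}$ so that $K=\alpha_0\cup\alpha_1$ is a Jordan curve, then apply the Jordan Curve Theorem together with a degree/winding argument---is genuinely different from the paper's, which never reduces to arcs. The paper works directly with the original (possibly wildly self-intersecting) curves: it applies the Torhorst Theorem to the complementary components of $\gamma_0\cup\gamma_1$ to obtain continuous boundary extensions of their Riemann maps, first proves that at most two components can meet $\bd U$, and then constructs a separating closed curve $\eta\subset U$ from pieces of those Riemann maps together with the given homotopy. Your chord homotopy for ``$\Leftarrow$'' and the winding argument for ``$\Rightarrow$'' are both correct and would give a cleaner proof \emph{if} the reduction worked.

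However, the reduction has a genuine gap, which you flag but do not close. First, two simple arcs from $z_0$ to $v$ can meet in a Cantor-like set (take $\alpha_0$ a straight segment and $\alpha_1$ the graph of a continuous function vanishing exactly on a Cantor set), so there is no finite collection of ``consecutive common points'' at which to cut and paste, and after a swap the resulting curves need not remain simple. Second, and more seriously, the lemma is a statement about $\clC\setminus(\gamma_0\cup\gamma_1)$, not $\clC\setminus(\alpha_0\cup\alpha_1)$: since $\alpha_0\cup\alpha_1\subsetneq\gamma_0\cup\gamma_1$, distinct components of the former complement can merge into a single component of the latter, and the number of components meeting $\bd U$ is not obviously preserved under either the passage from $\gamma_j$ to a simple subarc $\alpha_j$ or the subsequent disjointification. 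You acknowledge this (``one must also verify that this replacement does not alter the number or $\bd U$-footprint of the complementary components'') but never do it, and closing that gap is where most of the real work in the lemma lives---it is precisely what the paper's Torhorst/Riemann-map machinery, applied directly to the original curves, is doing.
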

\begin{proof} Note first that $\bd U$ is connected and $\gamma_0 \cup \gamma_1$ is connected and locally connected. Moreover, since $\bd U$ is infinite, there is at least one component of $\clC \setminus (\gamma_0 \cup \gamma_1)$ intersecting $\bd U$.

Now we show that there are at most two such components. On the contrary, suppose there are three different components $V_0, V_1, V_2$ of $\clC \setminus (\gamma_0 \cup \gamma_1)$ intersecting $\bd U$ and let $v_j \in \bd U \cap V_j$, $j = 0, 1, 2$. Note that $v \in \bd V_j$ for every $j$, because otherwise $\bd V_j \subset U$ for some $j$, which contradicts the connectivity of $\bd U$. Since $\gamma_0 \cup \gamma_1$ is connected, $V_j$ is simply connected. Let $\psi_j: \D \to V_j$ be a Riemann map such that $\psi_j(0) = v_j$. By Theorem~\ref{theorem:torhorst}, $\bd V_j$ is locally connected, so $\psi_j$ extends continuously to $\bdd$. As $v \in \bd V_j$, there exists $\xi_j \in \bd \D$ such that $\psi_j(\xi_j) = v$. 

Note that the curves $\psi_j([0, \xi_j])$ are topological arcs landing at the common endpoint $v$, such that $\psi_j([0, \xi_j]) \setminus \{v\}$ are disjoint curves in $\clC \setminus (\gamma_0 \cup \gamma_1)$. Hence, there is a neighbourhood $W$ of $v$, such that $W \setminus \left(\psi_0([0, \xi_0]) \cup \psi_1([0, \xi_1]) \cup \psi_2([0, \xi_2])\right)$ consists of three components $W_0, W_1, W_2$. Since $(\gamma_0([0, 1)) \cup \gamma_1([0, 1))) \cap (V_0 \cup V_1 \cup V_2) = \emptyset$ and $\gamma_0(1) = \gamma_1(1) = v$, each of the two ``tails'' $\gamma_0((t, 1))$, $\gamma_1((t, 1))$ for $t$ sufficiently close to $1$ is contained in one of the components $W_0, W_1, W_2$. Hence, one of them (say $W_0$) does not intersect any of the two tails. Then we can join two different curves $\psi_j([0, \xi_j])$ by a curve in $W_0 \setminus (\gamma_0 \cup \gamma_1)$, which is a contradiction.
Hence, there are at most two components of $\clC \setminus (\gamma_0 \cup \gamma_1)$ intersecting $\bd U$.

Now it is clear that the statements~(a) and~(b) will follow by showing that $\gamma_0$ and $\gamma_1$ are in the same access to $v$ if and only if there is exactly one component of $\clC \setminus (\gamma_0 \cup \gamma_1)$ intersecting $\bd U$.

In one direction we argue by contradiction. Assume that $\gamma_0$ and $\gamma_1$ are in the same access to $v\in\partial U$ and there are two different (simply connected) components $V_0, V_1$ of $\clC \setminus (\gamma_0 \cup \gamma_1)$ intersecting $\bd U$, with $v_j\in V_j\cap \partial U,\ j=0,1$. By a conformal change of coordinates in $\clC$, we may assume $v_1 = \infty$, which implies $\overline{V_0} \subset \C$.

Since $\gamma_0$ and $\gamma_1$ are in the same access to $v$, there exists a suitable homotopy between $\gamma_0$ and $\gamma_1$, i.e. a continuous map $H: [0,1]^2 \to U \cup \{v\}$ such that $H(t,0) = \gamma_0(t)$, $H(t,1) = \gamma_1(t)$ for $t \in [0,1]$, $H([0, 1)\times[0,1]) \subset U$ and $H(0,s) = z_0$, $H(1,s) = v$ for $s \in [0,1]$. Let $B \subset \C$ be a disc centered at $v$ of such a small radius that $v_0 \notin B$ and choose $t_0 \in (0, 1)$ such that
\begin{equation}\label{eq:HinB}
H\left([t_0,1] \times [0,1]\right) \subset B.
\end{equation}

As previously, let $\psi_0:\D \to V_0$ be the Riemann map which extends continuously to $\bdd$, such that $\psi_0(0)=v_0$ and $\psi_0(\xi_0)=v$ for some $\xi_0 \in \bdd$. Notice that $\xi_0$ is the unique point in $\bd \D$ such that $\psi_0(\xi_0) = v$. Indeed, suppose $\psi_0(\xi_0') = v$ for some $\xi_0' \in \bd \D$, 
$\xi_0' \neq \xi_0$. Then, by the Fatou Theorem, $\psi_0([0, \xi_0] \cup [0, \xi_0'])$ is a Jordan curve contained in $(\clC \setminus (\gamma_0 \cup \gamma_1)) \cup \{v\}$ separating the path-connected set $(\gamma_0 \cup \gamma_1) \setminus \{v\}$, which is impossible. Hence, the point $\xi_0$ is unique.

Take $\theta_0 \in \R$ such that $\xi_0 = e^{i\theta_0}$. We can assume $\theta_0 \in (0, 2\pi)$. By the continuity of $\psi_0$, there exist $\theta_1,\theta_2 \in(0,2\pi)$ with $0 <\theta_1 < \theta_0 < \theta_2 < 2\pi$ 
and $0 < \epsilon < 1$, such that 
\begin{equation}\label{eq:psiinB}
\psi_0(\{\rho e^{i \theta} \mid \rho \in [(1-\epsilon), 1], \; \theta \in 
[\theta_1,\theta_2]\}) \subset B.
\end{equation}
Note that $\psi_0(e^{i\theta_1}), \psi_0(e^{i\theta_2}) \neq v$ by the uniqueness of $\xi_0$. Hence, $\psi_0(e^{i\theta_j}) = \gamma_{k_j}(t_j)$, $j = 1, 2$, for some $k_1, k_2 \in \{0, 1\}$, and $t_1, t_2 \in (0,1)$. Moreover, by the continuity of $\psi_0$, we can choose $\theta_1, \theta_2$ such that
\begin{equation}\label{eq:>t_0}
t_1, t_2 \in [t_0,1).
\end{equation}

Let
\[
\alpha = \psi_0(\{\rho e^{i \theta_1} \mid \rho \in [\rho_0,1]\} \cup \{\rho_0 e^{i \theta} \mid \theta \in [0,2\pi] \setminus 
(\theta_1,\theta_2)\} \cup \{\rho e^{i \theta_2} \mid \rho \in [\rho_0,1]\}) 
\]
for $\rho_0 \in (1-\epsilon, 1)$. We claim that taking $\rho_0$ sufficiently close to $1$, we have
\begin{equation}\label{eq:alpha}
\alpha \subset U.
\end{equation}
Indeed, otherwise there exists a sequence $u_n \in \D$, such that $\psi_0(u_n) \in V_0 \setminus U$ and $u_n \to \xi_0'$ 
for some $\xi_0' \in \bd \D \setminus \{e^{i\theta} \mid \theta \in (\theta_1,\theta_2)\}$, which implies $\xi_0' \neq \xi_0$ and $\psi_0(\xi_0') \in \bd U \cap \bd V_0 = \{v\}$, making a contradiction with the uniqueness of $\xi_0$.  

By \eqref{eq:>t_0}, we can connect $\gamma_{k_1}(t_1)$ to $\gamma_{k_2}(t_2)$ by a curve $\beta \subset H\left([t_0, 1) \times [0,1]\right)$. Then it follows from \eqref{eq:HinB} that
\begin{equation}\label{eq:beta}
\beta \subset U \cap B.
\end{equation}
Let us define a closed (not necessarily Jordan) curve by
\[
\eta := \alpha \cup \beta.
\]
It follows from \eqref{eq:psiinB}, \eqref{eq:alpha} and \eqref{eq:beta} that 
\begin{equation}\label{eq:eta}
\eta \subset U \cap (V_0 \cup B).
\end{equation}
See Figure~\ref{fig:One_Two_Accesses}.

\begin{figure}[hbt!]
\centering
\def\svgwidth{0.7\textwidth}
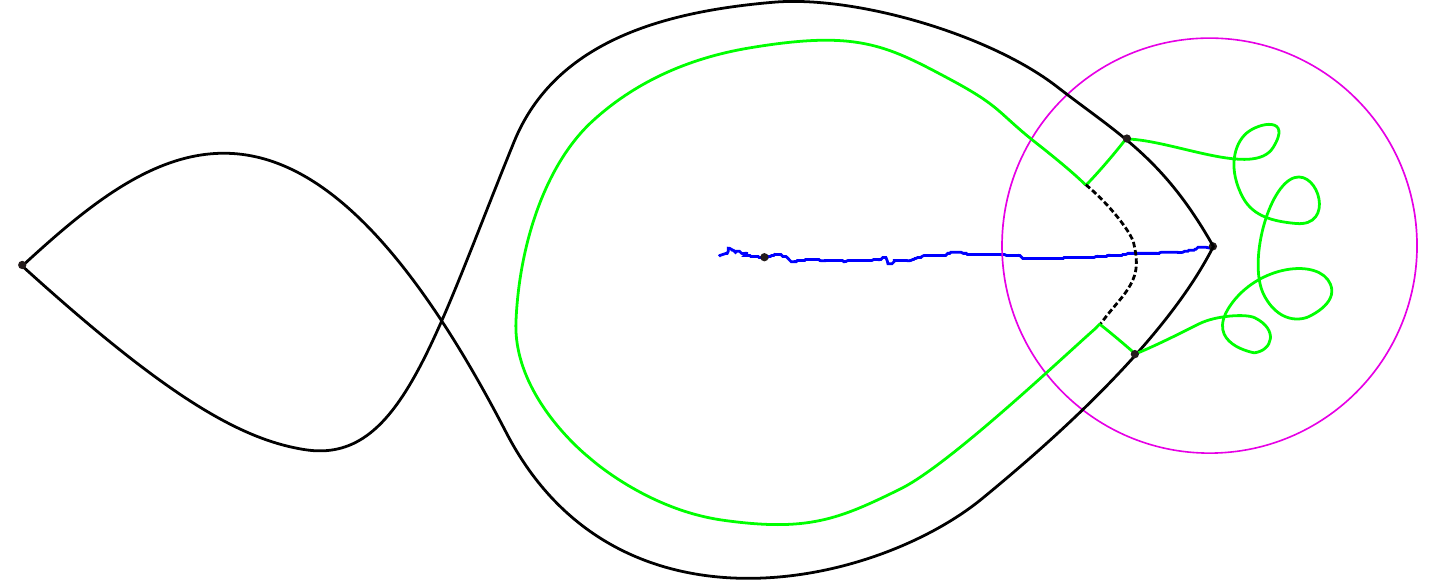
\caption{\small Construction of the curve $\eta$ in the proof of Lemma~\ref{lem:no_boundary1}.} 
\label{fig:One_Two_Accesses}
\end{figure}

Note that the index of $v_0$ with respect to the Jordan curve $\psi_0(\bd 
\D_{\rho_0})$ (with canonical orientation) is equal to $1$ and, by \eqref{eq:psiinB} and \eqref{eq:beta}, $\eta$ differs from $\psi_0(\bd \D_{\rho_0})$ by a closed curve contained in $B$. Hence, since $B$ is a simply connected set outside $v_0$, the index of $v_0$ with respect to $\eta$ (suitably oriented) is also equal to $1$. This implies that $v_0$ is in a bounded component of $\C \setminus \eta$, so $v_0$ and $v_1$ are separated in $\clC$ by $\eta$.  
Since $\eta \subset U$ by \eqref{eq:eta} and $v_0, v_1 \in \bd U$, this contradicts the connectivity of $\bd U$. We have proved that if $\gamma_0, \gamma_1$ are in the same access to $v$, then there is exactly 
one component of $\clC \setminus (\gamma_0 \cup \gamma_1)$ intersecting $\bd U$.

Conversely, suppose that there is exactly one component $V_0$ of $\clC \setminus (\gamma_0 \cup \gamma_1)$ intersecting $\bd U$. Let $K = \clC \setminus V_0$. By Theorem~\ref{thm:simply-con}, $K$ is simply connected. Moreover, the connectivity of $\bd U$ implies that $K$ is the union of $\gamma_0 \cup \gamma_1$ and the components of $\clC \setminus (\gamma_0 \cup \gamma_1)$ contained in $U$, in particular
\[
K \subset U \cup \{v\}.
\]
By the simply connectedness of $K$, the curves $\gamma_0$ and $\gamma_1$ are homotopic (with fixed endpoints) within $K \subset U \cup \{v\}$, which implies that they are in the same access to $v$.
\end{proof}

\begin{lem}\label{lem:arc} For every access $\A$ and every $\gamma \in \A$ there 
exists a topological arc $\gamma' \subset \gamma$ such that $\gamma' \in \A$. In particular, 
every access contains a topological arc.
\end{lem}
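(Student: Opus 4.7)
The plan is to extract a topological arc from $\gamma$ by treating its image as a Peano continuum, and then identify it with the given access via the preceding lemma. Concretely, set $X := \gamma([0,1])$. As a continuous image of the compact interval $[0,1]$ in the Hausdorff space $\clC$, $X$ is compact, connected and, by the Hahn--Mazurkiewicz theorem, locally connected; hence $X$ is a complete locally connected metric space. Applying the Mazurkiewicz--Moore Theorem to $X$ itself (taking $V = X$, $x = z_0 = \gamma(0)$ and $y = v = \gamma(1)$) produces a topological arc $\gamma' \subset X$ joining $z_0$ to $v$.

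Next, parameterise $\gamma'$ as a homeomorphism $\gamma' : [0,1] \to X$ with $\gamma'(0) = z_0$ and $\gamma'(1) = v$. Injectivity forces $\gamma'(t) \neq v$ for every $t \in [0,1)$, and since $\gamma' \subset \gamma([0,1]) \subset U \cup \{v\}$ this yields $\gamma'([0,1)) \subset U$. Thus $\gamma'$ is admissible as a defining curve of an access from $U$ to $v$, and the final clause of the lemma (existence of a topological arc in every access) follows once we place $\gamma'$ in $\cala$.

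It then remains to prove that $\gamma' \in \cala$. Here I would avoid constructing an explicit homotopy in $U \cup \{v\}$ and instead appeal to Lemma~\ref{lem:no_boundary1}(a). Since $\gamma' \subset \gamma$ we have $\gamma \cup \gamma' = \gamma$, so the criterion reduces to showing that exactly one component of $\clC \setminus \gamma$ meets $\bd U$. This follows by specialising the proof of Lemma~\ref{lem:no_boundary1} to the case $\gamma_0 = \gamma_1 = \gamma$: that argument already bounds the number of such components by two in general, and since $\gamma$ is trivially in the same access as itself, item~(b) of the lemma rules out the case of two components, leaving exactly one. The main subtle point is thus this reduction to the preceding lemma; the potentially delicate construction of a fixed-endpoint homotopy between $\gamma$ and $\gamma'$ that stays in $U$ except at the common endpoint $v$ is thereby bypassed.
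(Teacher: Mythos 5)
Your proof is correct, and the final step takes a genuinely different route than the paper's. Both proofs begin the same way: the image $\gamma([0,1])$ is a Peano continuum (by the Hahn--Mazurkiewicz Theorem), so the Mazurkiewicz--Moore Theorem (applied with $V=X=\gamma([0,1])$) produces a topological arc $\gamma'\subset\gamma$ from $z_0$ to $v$, and injectivity forces $\gamma'([0,1))\subset U$. Where you diverge is in showing $\gamma'\in\cala$. The paper builds a simply connected ``filled'' set $K$ (the union of $\gamma$ with the components of $\clC\setminus\gamma$ contained in $U$), verifies $K\subset U\cup\{v\}$ and that $K$ is simply connected, and then constructs the fixed-endpoint homotopy inside $K$ — in effect re-running, in miniature, the converse direction of Lemma~\ref{lem:no_boundary1}. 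You instead observe the key set-theoretic identity $\gamma\cup\gamma'=\gamma$, which lets you invoke Lemma~\ref{lem:no_boundary1} as a black box: applying it once with $\gamma_0=\gamma_1=\gamma$ (a legitimate instance — nothing in that lemma requires the curves to be distinct, and $\gamma$ is trivially in its own access) yields that exactly one component of $\clC\setminus\gamma$ meets $\partial U$, and applying it a second time with $\gamma_0=\gamma$, $\gamma_1=\gamma'$ then places $\gamma'$ in the same access. This is a cleaner deduction that avoids constructing a homotopy and avoids duplicating the simply-connectedness argument already present in Lemma~\ref{lem:no_boundary1}; you also explicitly verify admissibility of $\gamma'$ (that $\gamma'(t)\ne v$ for $t<1$), a detail the paper leaves implicit. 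The small cosmetic note is that you could just cite part (a) directly rather than ruling out the two-component case via part (b), but the logic is sound either way.
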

\begin{proof}
Let $\gamma \in \A$. Let $K$ be the union of $\gamma$ and all components of 
$\clC\setminus \gamma$ contained in $U$. Then $K \subset U$ and $K$ is simply 
connected because otherwise there would exist a loop in $K$ separating the 
boundary of $U$. By the Mazurkiewicz--Moore Theorem, $K$ is path-connected and there exists a topological arc $\gamma' \subset \gamma$ joining $z_0$ and $v$. Then $\gamma$ and $\gamma'$ are homotopic with fixed endpoints within $K$, so $\gamma' \in \A$. 
\end{proof}

\begin{proof}[Proof of the Correspondence Theorem]
To establish the one-to-one correspondence stated in the theorem we construct a bijective map $\Phi$ between accesses from $U$ to $v$ and points in $\partial \D$ for which the radial limit of $\varphi$ is equal to $v$.

Let $\A$ be an access to $v$ and take $\gamma_0 \in \A$. By the Lindel\"of Theorem, $\varphi^{-1}(\gamma_0)$ lands at a point $\zeta \in \bdd$ and the radial limit of $\varphi$ at $\zeta$ is equal to $v$. Set $\Phi(\A) = \zeta$. We claim that $\zeta$ does not depend on the choice of $\gamma_0$ (i.e.~the map $\Phi$ is well-defined). To see this, take some $\gamma_1 \in \A$ and suppose that $\varphi^{-1}(\gamma_1)$ lands at a point $\zeta' \in \bdd$, $\zeta' \neq \zeta$. By the Fatou Theorem, there 
exist $\xi_1, \xi_2 \in \bdd$ separating $\zeta$ and $\zeta'$ in $\bdd$, such 
that the radial limits of $\varphi$ at $\xi_1$ and $\xi_2$ exist and are 
different from $v$. Then, by construction, these radial limits are in two different components of $\clC \setminus (\gamma_0 \cup \gamma_1)$, which contradicts Lemma~\ref{lem:no_boundary1} since $\gamma_0$ and $\gamma_1$ are in the same access.

Now we prove the injectivity of $\Phi$. Suppose that two different accesses $\A_0$, $\A_1$ to $v$ correspond to the same point $\zeta \in\bdd$ and take $\gamma_0 \in \A_0$, $\gamma_1 \in \A_1$. By Lemma~\ref{lem:no_boundary1}, there exist two different components $V_0, V_1$ of $\clC \setminus (\gamma_0 \cup \gamma_1)$ intersecting $\bd U$. Since accessible points are dense in the boundary of a domain, there are points $v_0 \in \bd U \cap V_0$, $v_1 \in \bd U \cap V_1$ such that $v_0, v_1 \neq v$ and $v_j$ is accessible from $U \cap V_j$ by a curve $\eta_j$ for $j = 0, 1$. By the Lindel\"of Theorem, the curves $\varphi^{-1}(\eta_0), \varphi^{-1}(\eta_1)$ land at some points in $\bdd$ different from $\zeta$. But $\varphi^{-1}(\gamma_0), \varphi^{-1}(\gamma_1)$ land at $\zeta$, so $\varphi^{-1}(\eta_0)$ and $\varphi^{-1}(\eta_1)$ can be connected by an arc close to $\bdd$, disjoint from $\varphi^{-1}(\gamma_0) \cup \varphi^{-1}(\gamma_1)$. Hence, $\eta_0$ and $\eta_1$ can be connected by a curve in $U$ not intersecting $\gamma_0 \cup \gamma_1$, which 
is a contradiction.

Take now $\zeta \in \bdd$, such that the radial limit of $\varphi$ is equal to $v$. Then $\gamma := \varphi([0,\zeta])$ defines an access $\A$. By the Lindel\"of Theorem, $\Phi(\A) = \zeta$. It only remains to see that it we take any $\eta \subset \D$ landing at $\zeta$, such that $\varphi(\eta)$ lands at some point $w \in\clC$, then $w=v$ and $\varphi(\eta) \in \A$. The first fact holds by the Lindel\"of Theorem. Knowing $w=v$, the fact that $\varphi(\eta) \in \A$ follows from the injectivity of $\Phi$. 

Finally, the suitable correspondence between accesses to $v$ and the prime ends of $\varphi$ follows directly from the general classification of the prime ends of $\varphi$ and their correspondence to points of $\bd \D$, see e.g.~\cite[Theorem~9.4]{col-loh} for details.
\end{proof}

\section{Proofs of Theorems A and B}\label{sec:proofs}

Our goal in this section is to prove a dynamical version of the Correspondence Theorem (Theorem~A) together with an application concerning accesses to infinity and weakly repelling fixed points of meromorphic maps (Theorem~B). 

By $\varrho_V(\cdot)$, $\varrho_V(\cdot,\cdot)$ we will denote, respectively, the density of the hyperbolic metric and the hyperbolic distance in a simply connected hyperbolic domain $V \subset \C$. We will use the following standard estimate of the hyperbolic metric in simply connected domains:
\begin{equation} \label{eq:hyperb}
\frac{1}{2\dist(z, \bd V)} \leq \varrho_V(z) \leq \frac{2}{\dist(z, \bd V)} \qquad \text{for } z \in V,
\end{equation}
where $\dist$ is the Euclidean distance on the plane (see e.g.~\cite[Theorem 4.3]{carlesongamelin}).

The proof of Theorem~A is based on the three following lemmas. 
As a preliminary, we show a useful condition to ensure that two curves land at a boundary point through the same access. 

\begin{lem} \label{lem:hyp}
Let $U\subset \C$ be a simply connected domain and let $z_0 \in U$, $v \in \bd U$. Suppose that $\gamma_0, \gamma_1: [0,1) \to U$ are curves such that $\gamma_0(0) = \gamma_1(0) = z_0$ and $\gamma_0$ lands at a point $v \in \bd U$. If there exists $c > 0$ such that $\varrho_U(\gamma_0(t), \gamma_1(t)) < c$ for every $t \in [0, 1)$, then $\gamma_1$ lands at $v$ and $\gamma_0$, $\gamma_1$ are in the same access to $v$ from $U$. 
\end{lem}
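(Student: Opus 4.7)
My plan is to transfer everything to the unit disc via the Riemann map $\varphi:\D\to U$ and then combine two purely metric facts: a Riemann map is an isometry between $(\D,\varrho_\D)$ and $(U,\varrho_U)$, and hyperbolic balls of fixed radius shrink in the Euclidean sense as their centers approach the boundary, in both metrics. Assuming $z_0=\varphi(0)$ as in the paper, I set $\tilde\gamma_i = \varphi^{-1}\circ \gamma_i$ for $i=0,1$. The isometry yields $\varrho_\D(\tilde\gamma_0(t),\tilde\gamma_1(t))<c$ for all $t\in[0,1)$, and by the Lindel\"of Theorem applied to $\gamma_0$, the curve $\tilde\gamma_0$ lands at some $\zeta\in\bdd$ at which $\varphi$ has radial limit $v$.

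The workhorse of the argument is the following standard fact, which I would prove by a short direct computation with the pseudohyperbolic metric: the hyperbolic ball $B_{\varrho_\D}(z,c)$ is a Euclidean disc of Euclidean radius $s(1-|z|^2)/(1-s^2|z|^2)$, where $s=\tanh c$, whose Euclidean diameter therefore tends to $0$ as $|z|\to 1$. Consequently, if $z_n\to\zeta\in\bdd$ and $\varrho_\D(z_n,w_n)<c$, then $w_n\to\zeta$. Taking $z_n=\tilde\gamma_0(t_n)$ and $w_n=\tilde\gamma_1(t_n)$ for an arbitrary sequence $t_n\to 1^-$ shows that $\tilde\gamma_1$ also lands at $\zeta$.

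To conclude that $\gamma_1$ lands at $v$, I would use the analogous Euclidean estimate in $U$: the upper bound in \eqref{eq:hyperb} together with a standard Koebe-type argument yields a constant $K(c)>0$ such that $|\gamma_1(t)-\gamma_0(t)| \leq K(c)\,\dist(\gamma_0(t),\bd U)$ for every $t$. Since $\gamma_0(t)\to v\in\bd U$ forces $\dist(\gamma_0(t),\bd U)\to 0$, this gives $\gamma_1(t)\to v$.

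Having shown that $\tilde\gamma_0,\tilde\gamma_1$ land at the same point $\zeta\in\bdd$ and that both $\gamma_0,\gamma_1$ land at $v$, the statement about the access is a direct application of part~(b) of the Correspondence Theorem: it associates to $\zeta$ a unique access $\cala$ to $v$ containing $\varphi(\eta)$ for every curve $\eta\subset\D$ landing at $\zeta$ whose $\varphi$-image lands in $\clC$. Both $\tilde\gamma_0$ and $\tilde\gamma_1$ satisfy this hypothesis, so $\gamma_0,\gamma_1\in\cala$, and in particular they determine the same access. I expect the main obstacle to be the elementary but slightly technical identification of hyperbolic balls in $\D$ with Euclidean discs and the precise decay rate of their radii as $|z|\to 1$; once that computation is in place, the rest of the argument is essentially bookkeeping around the Lindel\"of and Correspondence Theorems.
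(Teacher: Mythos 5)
Your proposal is correct, and it is a genuinely different route from the paper's proof. The paper works directly in $U$: having shown $\gamma_1(t)\to v$, it defines a homotopy $H$ by taking the hyperbolic geodesic $\alpha_t$ joining $\gamma_0(t)$ to $\gamma_1(t)$ for each $t<1$, sets $H(1,\cdot)\equiv v$, and verifies continuity at $t=1$ by a hyperbolic-metric contradiction argument (if $\alpha_{t_n}(s_n)\not\to v$, then $\varrho_U(\gamma_0(t_n),\alpha_{t_n}(s_n))\to\infty$, contradicting $\varrho_U(\gamma_0(t_n),\gamma_1(t_n))<c$). You instead transfer to the disc via the isometry $\varphi^{-1}$, use the fact that hyperbolic balls of fixed radius have Euclidean diameter tending to $0$ near $\bdd$ to show that $\tilde\gamma_0$ and $\tilde\gamma_1$ land at a common $\zeta\in\bdd$, establish separately that $\gamma_1$ lands at $v$ via the estimate $|\gamma_1(t)-\gamma_0(t)|\le K(c)\,\dist(\gamma_0(t),\bd U)$ (which indeed follows from the lower bound in \eqref{eq:hyperb} by integrating along a path of small hyperbolic length: $c>\tfrac12\log\bigl(1+|\gamma_1(t)-\gamma_0(t)|/\dist(\gamma_0(t),\bd U)\bigr)$), and then invoke part (b) of the Correspondence Theorem, which is already available at this point in the paper with no circularity. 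Each approach has its merits: yours is more conceptual and reuses the Correspondence Theorem as a black box, whereas the paper's is self-contained (independent of the Correspondence Theorem) at the cost of a slightly more technical homotopy continuity argument. One cosmetic remark: the exact expression $s(1-|z|^2)/(1-s^2|z|^2)$ with $s=\tanh c$ for the Euclidean radius of a hyperbolic ball depends on the normalization of $\varrho_\D$ (that formula corresponds to curvature $-4$, while the paper's estimate \eqref{eq:hyperb} is consistent with curvature $-1$), but the decay to $0$ as $|z|\to 1$, which is all you use, holds regardless.
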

\begin{proof} By a conformal change of coordinates, we can assume $v \in \C$. Then by \eqref{eq:hyperb}, we have
\[
\varrho_U(z) \geq \frac{1}{2|z-v|} 
\]
for $z \in U$, which implies
\[
c > \varrho_U(\gamma_0(t), \gamma_1(t)) \ge \frac 1 2 \int_{|\gamma_0(t) - v|}^{|\gamma_1(t) - v|} \frac{dx}{x} = \ln\left|\frac{\gamma_1(t) - v}{\gamma_0(t) - v}\right|.
\]
Since $|\gamma_0(t) - v| \to 0$ as $t \to 1^-$, we must have $|\gamma_1(t) - v| \to 0$ as $t \to 1^-$, which means that $\gamma_1$ lands at $v$.

To show that $\gamma_0$, $\gamma_1$ are in the same access to $v$, let $\alpha_t : [0,1] \to U$, $t \in [0,1)$, be the hyperbolic geodesic joining $\gamma_0(t)$ to $\gamma_1(t)$ in $U$. Define $H(t, s) = \alpha_t(s)$ for $(t,s) \in [0,1) \times [0,1]$ and $H(1, s) = v$ for $s \in [0,1]$. To check that $H$ is a suitable homotopy between $\gamma_0$ and $\gamma_1$, it is enough to show that $\alpha_{t_n}(s_n) \to v$ for every $t_n \in [0, 1)$, $t_n \to 1^-$, $s_n \in [0,1]$. Suppose otherwise. Then, choosing a subsequence, we can assume that there exists $d > 0$ such that $|\alpha_{t_n}(s_n) - v| > d$ for every $n$. Again by \eqref{eq:hyperb},
\[
 \varrho_U(\gamma_0(t_n), \alpha_{t_n}(s_n)) \ge \frac 1 2 \int_{|\gamma_0(t_n) - v|}^{|\alpha_{t_n}(s_n) - v|} \frac{dx}{x} = \ln\left|\frac{\alpha_{t_n}(s_n) - v}{\gamma_0(t_n) - v}\right| > \ln\frac{d}{|\gamma_0(t_n) - v|}.
\]
Since $|\gamma_0(t_n) - v| \to 0$ as $n \to \infty$ and $\alpha_{t_n}$ is the geodesic, we have 
\[
c > \varrho_U(\gamma_0(t_n), \gamma_1(t_n)) \geq \varrho_U(\gamma_0(t_n), \alpha_{t_n}(s_n)) \to \infty,
\]
which is a contradiction. Hence, $\gamma_0$, $\gamma_1$ are in the same access to $v$.
\end{proof}

The next lemma studies the local behaviour of a map near a regular boundary  fixed point.

\begin{lem}\label{lem:angular} Let $g: \D \to \D$ be a holomorphic map and $\zeta \in \bdd$ a regular boundary  fixed point of $g$. Then there exists $c > 0$ such that $\varrho_\D(g(t\zeta), t\zeta) < c$ for every $t \in [0, 1)$.
\end{lem}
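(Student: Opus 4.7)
The plan is to reduce to the case $\zeta = 1$, work with the pseudohyperbolic distance, and show that it stays bounded away from $1$ along the radius; a uniform bound there translates into a uniform bound on the hyperbolic distance. Conjugating $g$ by the rotation $z \mapsto \zeta z$ (an isometry of $\D$ for the hyperbolic metric) reduces the problem to $\zeta = 1$. Since $1$ is a regular boundary fixed point, the Julia--Wolff Lemma (Theorem~\ref{jw}) guarantees that the angular derivative of $g$ at $1$ is a finite positive real number $a$; specializing its definition to the radial direction gives
\[
\lim_{t \to 1^-} \frac{g(t) - 1}{t - 1} = a,
\]
so $g(t) = 1 - a(1 - t) + o(1 - t)$ as $t \to 1^-$.

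Next, I will control the pseudohyperbolic distance
\[
d_\D(g(t), t) = \left| \frac{g(t) - t}{1 - t\, g(t)} \right|.
\]
A direct computation using the expansion above yields $g(t) - t = (1 - a)(1 - t) + o(1 - t)$ and $1 - t\, g(t) = (1 + a t)(1 - t) + o(1 - t)$, hence
\[
\lim_{t \to 1^-} d_\D(g(t), t) = \frac{|1 - a|}{1 + a},
\]
which is strictly less than $1$ for every $a > 0$.

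Finally, the function $t \mapsto d_\D(g(t), t)$ is continuous on $[0, 1)$ (since $g$ is holomorphic on $\D$), takes values strictly less than $1$ there (both $t$ and $g(t)$ lie in $\D$), and admits a limit strictly less than $1$ at the endpoint $t = 1$. Therefore it is uniformly bounded by some constant $c' < 1$ on all of $[0, 1)$. Since $\varrho_\D$ is a monotone function of $d_\D$ that blows up only as $d_\D \to 1^-$, this upper bound transfers to a uniform bound $c$ on $\varrho_\D(g(t), t)$, which by the initial rotation proves the lemma for general $\zeta$. The only delicate point is the expansion of $g(t)$, which relies crucially on the finiteness of the angular derivative — exactly the regularity hypothesis being invoked; without it the analogous limit computation would produce $|1 - a|/(1 + a) = 1$ (the case $a = \infty$) and fail to give a strict bound.
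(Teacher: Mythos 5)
Your proof is correct and uses the same key ingredient as the paper: the Julia--Wolff Lemma giving a finite angular derivative $a$, hence the expansion $g(t\zeta) = \zeta - a(1-t)\zeta + o(1-t)$. The only technical difference is in how the distance bound is extracted: you compute the pseudohyperbolic distance exactly from the disc formula and take the limit $|1-a|/(1+a) < 1$, whereas the paper bounds $\varrho_\D$ directly via the density estimate \eqref{eq:hyperb}; both yield the uniform bound, and your rotation to $\zeta = 1$ is a harmless normalization.
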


\begin{proof}
By the Julia--Wolff Lemma, there exists $a \in (0,\infty)$ such that
\[
\frac{g(t\zeta)- \zeta}{(t-1)\zeta} \to a
\]
as $t \to 1^-$. Hence,
\[
g(t\zeta) - t\zeta = (a - 1 + h(t))(t-1)\zeta
\]
with $|h(t)| \to 0$ as $t\to 1^-$, so for $t \in (t_0, 1)$, with some $t_0$ close to $1$, we have 
\[
\varrho_\D(g(t\zeta),t\zeta) \leq \frac{2|g(t\zeta) - t\zeta|}{\inf_{u \in [g(t\zeta),t\zeta]}\dist(u, \bdd)} <  \frac{3|g(t\zeta) - t\zeta|}{\min(a, 1) (1-t)} < \frac{3|a - 1 + h(z)|}{\min(a, 1)} < \frac{3|a - 1| + 1}{\min(a, 1)}.
\]
Since for $t \in [0, t_0]$ the estimation is obvious, the lemma is proved.
\end{proof}

The third lemma is a modification of the classical Douady--Hubbard--Sullivan Snail Lemma (see e.g.~\cite[Lemma 16.2]{milnor}) on the multiplier of the landing point of an invariant curve in a Fatou component $U$. The difference with a standard setup is that instead of invariance, we assume that the hyperbolic distance in $U$ between a point of the curve and its image under the map is bounded.  

\begin{lem}[{\bf Modified Snail Lemma}{}]\label{prop:lands}
Let $f:\C \to \clC$ be a meromorphic map and $U$ a simply connected invariant Fatou component of $f$. Suppose that a curve $\gamma: [0, 1) \to U$ lands at a fixed point $v\in \bd U$ of $f$ and there exists $c > 0$ such that $\varrho_U(f(\gamma(t)), \gamma(t)) < c$ for every $t \in [0,1)$. Then $v$ is a weakly repelling fixed point of $f$. 
\end{lem}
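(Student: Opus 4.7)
The plan is to transfer the setup to the unit disc via the Riemann map $\varphi$, identify a boundary fixed point of the inner function $g$, and then derive the conclusion through the Denjoy--Wolff classification together with a snail-type argument in logarithmic coordinates on $U$.

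First I would apply the Lindel\"of Theorem to $\tilde\gamma := \varphi^{-1}(\gamma) \subset \D$: this curve lands at some $\zeta \in \bdd$, and $\varphi$ has radial limit $v$ at $\zeta$. Conformal invariance of the hyperbolic metric gives $\varrho_\D(\tilde\gamma(t), g(\tilde\gamma(t))) < c$. Since hyperbolic balls of bounded radius in $\D$ shrink Euclideanly near $\bdd$, $g\circ\tilde\gamma$ also lands at $\zeta$, and the Lehto--Virtanen Theorem applied to the bounded map $g$ shows that $\zeta$ is a radial boundary fixed point of $g$, with angular derivative $a \in (0,+\infty]$ by the Julia--Wolff Lemma.

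Next I would classify $\zeta$ via the Denjoy--Wolff Theorem. If $\zeta$ coincides with the Denjoy--Wolff point $\xi$ of $g$, the type of $U$ is forced: attracting basins and Siegel discs have $\xi \in \D$ (contradicting $\zeta \in \bdd$), Baker domains would force the radial limit of $\varphi$ at $\xi$ to be $\infty$ (contradicting $v \in \C$), so $U$ must be a parabolic basin, $v$ equals its parabolic fixed point, and $f'(v) = 1$, giving weak repulsion. Otherwise Theorem~\ref{DWII} gives $\zeta$ repelling ($a > 1$, possibly infinite). Here one observes that $v \in \bd U \subset J(f)$ (any Fatou-set boundary point would lie in a Fatou component intersecting and hence equal to $U$, forcing $v \in U$), so $\lambda := f'(v)$ satisfies $|\lambda| \ge 1$; if $|\lambda| > 1$ we are done, and the remaining possibility to exclude is $|\lambda| = 1$ with $\lambda \neq 1$ (parabolic or Cremer).

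For this exclusion I would translate $v$ to $0$ and pass to logarithmic coordinates: since $U$ is simply connected and $0 \notin U$, a single-valued branch $L = \log$ exists on $U$ and maps it conformally onto a simply connected $\tilde U \subset \C$. The estimate $\varrho_U(z) \ge 1/(2|z|)$ pushes forward to $\varrho_{\tilde U} \ge 1/2$, yielding $|\tilde f(\tilde\gamma(t)) - \tilde\gamma(t)| < 2c$ for the lifted map $\tilde f = L\circ f \circ L^{-1}$, which satisfies $\tilde f(w) = w + i\beta + O(e^w)$ as $\Re w \to -\infty$ with $e^{i\beta} = \lambda$. Schwarz--Pick iteration gives $|\tilde f^n(\tilde\gamma(t)) - \tilde\gamma(t)| < 2nc$; Lemma~\ref{lem:hyp}, applied inductively, forces $\gamma$ and all $f^n\circ\gamma$ to share a single access to $v$. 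The classical Douady--Hubbard--Sullivan snail argument, adapted to this setup, then uses the approximate vertical translation of $\tilde f$ together with the repelling structure of $g$ at $\zeta$ to force $\beta = 0$, i.e.\ $\lambda = 1$, contradicting the assumption.

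The principal obstacle is this last adaptation of the classical snail argument: the original proof uses strict invariance $f(\gamma)\subset\gamma$ to drive iterates to $v$, whereas our hypothesis provides only hyperbolic closeness, so the role of invariance must be replaced by the same-access property from Lemma~\ref{lem:hyp}; the subcase $a = \infty$ (where no local linearisation of $g$ at $\zeta$ is available) requires separate topological handling based on the prime-end structure of $U$ at $v$.
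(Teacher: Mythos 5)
Your setup---reducing to $f'(v)=e^{2\pi i\beta}$ with $\beta\in(0,1)$ by ruling out $|f'(v)|<1$ (Fatou set) and observing $\lambda=1$ is already weakly repelling---is correct, and the detour through the Riemann map, the Denjoy--Wolff classification, and the repelling structure of $g$ at $\zeta$ is not wrong. But that machinery is not what the lemma is about: the paper's proof never touches $g$ or logarithmic coordinates. The entire content is the construction, inside $U$, of a closed curve with nonzero winding number around $v$, and this is exactly the step your proposal leaves unproved. You write that ``the classical Douady--Hubbard--Sullivan snail argument, adapted to this setup, \dots forces $\beta=0$'' and then explicitly flag this adaptation as ``the principal obstacle'' requiring ``separate topological handling.'' That is the gap, not a footnote: the lemma exists precisely because the classical snail argument, which uses $f(\gamma)\subset\gamma$, does not apply, and the ``same-access'' property from Lemma~\ref{lem:hyp} does not by itself produce the contradiction.

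The missing argument, as the paper carries it out, is the following. Choose integers $p,q$ with $|q\beta-p|<\varepsilon$ for small $\varepsilon<\min(\beta,1-\beta)$. The hypothesis $\varrho_U(f(\gamma(t)),\gamma(t))<c$ together with the estimate $\varrho_U(z)\ge 1/(2\dist(z,\bd U))$ gives a $\delta>0$ (independent of $t$) with $\D(\gamma(t),\delta|f(\gamma(t))-\gamma(t)|)\subset U$. Take the hyperbolic geodesic $\eta_0$ in $U$ from $\gamma(t)$ to $f(\gamma(t))$ for $t$ near $1$; it lies in a small neighbourhood of $v$, so $\eta=\bigcup_{j=0}^{q-1}f^j(\eta_0)\subset U$ is also small. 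Each $f^j(\eta_0)$ has argument increment (about $v$) close to $\beta+m$ for a fixed integer $m$, so $\eta$ has argument increment within $1/2+\varepsilon$ of $p+qm$, and $p+qm\neq0$. Because $|q\beta-p|<\varepsilon$, the endpoint $f^q(\gamma(t))$ is so close to $\gamma(t)$ relative to $|f(\gamma(t))-\gamma(t)|$ that the straight segment $[f^q(\gamma(t)),\gamma(t)]$ lies in the disc $\D(\gamma(t),\delta|f(\gamma(t))-\gamma(t)|)\subset U$ and contributes negligibly to the argument. Hence $\kappa=\eta\cup[f^q(\gamma(t)),\gamma(t)]$ is a closed curve in $U$ with winding number $p+qm\neq0$ about $v\in\bd U$, contradicting simple connectedness of $U$. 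None of this is present in, or readily recoverable from, your outline, and the logarithmic-coordinate reformulation you propose would still have to reproduce all of it; so the proof is incomplete at its central step.
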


\begin{proof}By a conformal change of coordinates, we can assume $v \in \C$.
We need to show that $|f'(v)| > 1$ or $f'(v) = 1$. Suppose this does not hold. If $|f'(v)| < 1$, then $v$ is in the Fatou set of $f$, which contradicts the fact $v \in \bd U$. Hence, we are left with the case 
\[
f'(v) = e^{2 \pi i \beta}, \quad\beta \in (0, 1). 
\]
Since $\beta \in (0, 1)$, we can take an arbitrarily small positive $\varepsilon < \min(\beta, 1 -\beta)$ and positive integers $p, q$ such that 
\begin{equation}\label{eq:qp}
|q\beta - p| < \varepsilon.
\end{equation}
This can be accomplished by using, for example, a sequence of rational numbers $p_n/q_n$ approximating $\beta$ with rate $1/q_n^2$.

The strategy of the proof is as follows. Using the asymptotics of $f$ near $v$, i.e.~a rotation of angle $\beta$, close to $p/q$, we construct a closed curve $\kappa \subset U$ near $v$ and prove that the index of $\kappa$ with respect to $v$ is non-zero, contradicting the simply connectivity of $U$. We now proceed to make this idea precise.

Let $w = w(t) \in \bd U$ be such that $\dist(\gamma(t), \bd U) = |\gamma(t) - w|$. By \eqref{eq:hyperb}, we have
\[
\varrho_U(z) \geq \frac{1}{2|z-w|} 
\]
for $z \in U$, which gives
\[
c > \varrho_U(f(\gamma(t)), \gamma(t)) \ge \frac 1 2 \int^{|f(\gamma(t)) - w|}_{|\gamma(t) - w|} \frac{dx}{x} = \ln\left|\frac{f(\gamma(t)) - w}{\gamma(t) - w}\right|.
\]
Hence,
\[
e^c > \frac{|f(\gamma(t)) - w|}{|\gamma(t) - w|} \ge \frac{|f(\gamma(t)) - \gamma(t)| - |\gamma(t) -w|}{|\gamma(t) - w|} = \frac{|f(\gamma(t)) - \gamma(t)|}{\dist(\gamma(t), \bd U)} - 1. 
\]
This implies that there exists $\delta > 0$ (any $0<\delta < \frac{1}{e^c+1}$ would suffice) such that 
\begin{equation}\label{eq:delta}
\D(\gamma(t), \delta |f(\gamma(t)) - \gamma(t)|) \subset U.
\end{equation}
for every $t \in [0,1)$. 

Fix some $t \in [0,1)$ close to $1$ and let $\eta_0$ be the hyperbolic geodesic connecting $\gamma(t)$ to $f(\gamma(t))$ in $U$. We claim that $\eta_0$ is contained in an arbitrarily small neighbourhood of $v$, if $t$ is close enough to $1$. Indeed, if $u \in \eta_0$ is such that $\max_{z \in \eta_0} |z-v| = |u - v|$, then using \eqref{eq:hyperb} and repeating the above estimates we obtain
\[
c > \varrho_U(f(\gamma(t)), \gamma(t)) \ge \varrho_U(u, \gamma(t)) \ge \frac 1 2 \int_{|\gamma(t)) - v|}^{|u-v|} \frac{dx}{x} = \ln\left|\frac{u - v}{\gamma(t) - v}\right|.
\]
Hence, since $|\gamma(t) - v| \to 0$ as $t \to 1^-$, the maximal distance from points of $\eta_0$ to $v$ is arbitrarily small if $t$ is close enough to $1$.

Let
\[
\eta = \bigcup_{j= 0}^{q-1} \eta_j, \qquad \text{where} \quad \eta_j = f^j(\eta_0) \quad \text{for } j = 1, \ldots, q-1. 
\]
Then
\begin{equation}\label{eq:inU}
\eta \subset U 
\end{equation}
and $\eta$ is contained in a small neighbourhood of $v$, if $t$ is close enough to 1.
Denote by $\Delta_{\Arg}$ the growth of the argument function $z \mapsto \frac{1}{2\pi}\Arg(z - v)$ along a curve. Since $f(z) - v = e^{2 \pi i \beta}(z - v) + O((z - v)^2)$ for $z$ close to $v$, 
we have
\[
|\Delta_{\Arg} (\eta_0) - \beta - m| < \frac 1 {2q} 
\]
for some $m \in \Z$, if $t$ is sufficiently close to $1$. Similarly, since $\eta_j$ are consecutive images of $\eta_0$ under $f$, we have
\[
|\Delta_{\Arg} (\eta_j) - \beta - m| < \frac 1 {2q} 
\]
for $j = 0, \ldots, q-1$. Summing up, we have
\[
|\Delta_{\Arg} (\eta) - q \beta - qm| < \frac 1 2,
\]
so by \eqref{eq:qp},
\begin{equation}\label{eq:arg}
|\Delta_{\Arg} (\eta) - p - qm| < |\Delta_{\Arg} (\eta) -q\beta -q m|+|q\beta -p|   <\frac 1 2 + \varepsilon.
\end{equation}

Note that 
\begin{equation}\label{eq:not0}
p+qm \neq 0,
\end{equation}
because otherwise by \eqref{eq:qp}, we would have $|\beta + m | < \varepsilon/q \leq \varepsilon$, which contradicts the conditions $\beta \in (0, 1)$,  $\varepsilon < \min(\beta, 1 -\beta)$.

We have $f^q(z) - v = e^{2 \pi i q\beta}(z - v) + O((z - v)^2)$ for $z$ near $v$,
which together with \eqref{eq:qp} implies
\[
\frac{|f^q(\gamma(t)) - \gamma(t)|}{|\gamma(t) - v|} \to |1 - e^{2\pi i q\beta}| < |1-e^{2\pi i\varepsilon}| <  2\pi \varepsilon, 
\]
so
\begin{equation}\label{eq:arg2}
\Delta_{\Arg}([f^q(\gamma(t)), \gamma(t)]) < 2\pi \varepsilon.
\end{equation}
Moreover,
\[
\frac{|f^q(\gamma(t)) - \gamma(t)|}{|f(\gamma(t)) - \gamma(t)|} \to \frac{|1 - e^{2\pi i q\beta}|}{|1 - e^{2\pi i \beta}|} < \frac{2\pi \varepsilon}{|1 - e^{2\pi i \beta}|} < \delta,
\]
as $t \to 1^-$, if $\varepsilon$ is taken sufficiently small. Hence, by \eqref{eq:delta},
\begin{equation}\label{eq:inU2}
[f^q(\gamma(t)),\gamma(t)] \subset U
\end{equation}
for $t$ sufficiently close to $1$. 

Concluding, by \eqref{eq:inU}, \eqref{eq:inU2}, \eqref{eq:arg} and \eqref{eq:arg2}, the curve 
\[
\kappa = \eta \cup [f^q(\gamma(t)), \gamma(t)]
\]
is a closed curve in $U$ and
\[
|\Delta_{\Arg}(\kappa) -p - qm| < \frac 1 2 + (2\pi + 1) \varepsilon < 1,
\]
provided $\varepsilon$ is taken sufficiently small and $t$ is sufficiently close to $1$. Hence, in fact 
\[
\Delta_{\Arg}(\kappa) = p + qm.
\]
By \eqref{eq:not0}, we conclude that the index of $v$ with respect to $\kappa$ is non-zero. This means that $\tilde \eta$ is not contractible in $U$, which contradicts the simple connectedness of $U$ and concludes the proof.
\end{proof}

Now we are ready to prove Theorem~A.

\begin{proof}[Proof of Theorem \rm A] 
To prove (a), let $\A$ be an invariant accesses to $v$ and let $\gamma \in \A$, such that $f(\gamma) \cup \eta \in \A$, where $\eta$ is a curve connecting $z_0$ to $f(z_0)$ in $U$. We have $\gamma(t) \to v$, $f(\gamma(t)) \to v$ as $t \to 1^-$, so by continuity, either $v = \infty$ or $f(v) = v$.

By the Correspondence Theorem, the curves $\varphi^{-1}(\gamma)$ and $g(\varphi^{-1}(\gamma)) = \varphi^{-1}(f(\gamma))$ land at a common point $\zeta \in \bdd$, such that the radial limit of $\varphi$ at $\zeta$ is equal to $v$. By the Lehto--Virtanen Theorem, the radial limit of $g$ at $\zeta$ exists and is equal to $\zeta$, so $\zeta\in\bdd$ is a boundary fixed point of $g$.

To prove (b), let $\zeta\in\bdd$ be a regular boundary fixed point of $g$ and let \[
\gamma(t) = \varphi(t \zeta)                                                             
\]
for $t \in [0, 1)$. By Lemma~\ref{lem:angular}, there exists $c > 0$ such that
\begin{equation}\label{eq:<c}
\varrho_U(f(\gamma(t)), \gamma(t)) = \varrho_\D(g(t\zeta), t\zeta) < c.
\end{equation}

Suppose that $\gamma(t_n) \to v$ for some point $v \in \bd U \cap \C$ and a sequence $t_n \in [0,1)$, $t_n \to 1^-$. Using \eqref{eq:<c} and \eqref{eq:hyperb}, we obtain
\[
c > \varrho_U(f(\gamma(t_n)), \gamma(t_n)) \geq  \frac 1 2 \int_{|\gamma(t_n) - v|}^{|f(\gamma(t_n)) - v|} \frac{dx}{x} = \ln\left|\frac{f(\gamma(t_n)) - v}{\gamma(t_n) - v}\right|.
\]
Since $|\gamma(t_n) - v| \to 0$ as $n \to \infty$, we have $|f(\gamma(t_n)) - v| \to 0$, so by continuity, $f(v) = v$. Hence, the limit set of the curve $\gamma(t)$ for $t \to 1^-$ is contained in the set of fixed points of $f$ in $\bd U\cap \C$ together with $\infty$. Since it is a discrete set, in fact $\gamma(t) \to v$ as $t \to 1^-$, where $v\in \bd U \cap \C$ is a fixed point of $f$. 

By \eqref{eq:<c}, the assumptions of Lemma~\ref{prop:lands} are satisfied for the curve $\gamma$, so the lemma implies that $v$ is weakly repelling. 

Let $\eta$ be a curve connecting $\gamma(0)$ to $f(\gamma(0))$ in $U$. The fact that $\gamma$ and $f(\gamma) \cup \eta$ are in the same access $\A$ follows directly from \eqref{eq:<c} and Lemma~\ref{lem:hyp}. Hence, by the Correspondence Theorem, 
$\zeta$ corresponds to an invariant access to $v$.

\end{proof}

\begin{proof}[Proof of Theorem \rm B] Consider first the case $d < \infty$. Let $g$ be the inner function associated to $f|_U$. Recall that $g$ extends by reflection to the Riemann sphere as a finite Blaschke product of degree $d$. Hence, $g$ has finite derivative at all its boundary fixed points, so all of them are regular. Therefore, by Theorem~A and the Correspondence Theorem, $U$ has exactly $D$ invariant accesses to infinity or to weakly repelling points of $f$ in $\bd U \cap \C$ and there are no other invariant accesses from $U$ to its boundary points. 

To end the proof of the statement~(a), we estimate the number $D$.
As a rational map, $g$ has $d + 1$ fixed points in $\clC$, counted with multiplicities. Note that by the Denjoy--Wolff Theorem and Theorem~\ref{DWII}, if $d > 1$, then we can have either none or two fixed points in $\clC \setminus \bdd$ (and they are attracting) and on $\bdd$ there can be at most one non-repelling (attracting or parabolic) fixed point. In fact, the map $g$ on $\D$ is of one of the following four types: elliptic, hyperbolic, simply parabolic or doubly parabolic (see e.g.~\cite{absorb,faghen}). In the elliptic case, all fixed points of $g$ are non-degenerate (i.e.~their multipliers are not equal to $1$), there are two attracting fixed points outside $\bdd$ (the Denjoy--Wolff points for $g|_\D$ and $g|_{\clC\setminus\overline\D}$) and all other fixed points are repelling, so $D = d - 1$. In the hyperbolic case, all 
fixed points of $g$ are non-degenerate fixed points in $\bdd$ (one attracting and all others repelling), so $D = d + 1$. In the two parabolic cases, all fixed points of $g$ are in $\bdd$ and exactly one of them (which is the Denjoy--Wolff point for $g|_\D$) is degenerate (parabolic) of multiplicity two or three, respectively. Hence, $D = d$ or $d - 1$ in this case. For details, see e.g.~\cite{faghen}. 

In this way we have proved that $d -1 \le D \le d +1$. In particular, this implies that $D \ge 1$ in the case $d > 1$. If $d = 1$, then the only possible cases are elliptic, simply parabolic or hyperbolic. In the two latter cases, $g$ has a fixed point in $\bdd$, so $D =1$. In the elliptic case, $f$ has an attracting or neutral fixed point in $U$. If the point is attracting, then $U$ contains a critical point, which contradicts $d = 1$. In the remaining case, the neutral fixed point is the center of an invariant Siegel disc. This shows that $D \geq 1$ unless $f|_U$ is a Siegel disc. This ends the proof of the statement~(a).

Suppose now that $d =\infty$ and $f|_U$ is singularly nice. Then there exists a singularity $\zeta^* \in\bdd$ of the inner function $g$ associated to $f|_U$ and an open arc $A \subset \bdd$, such that $\zeta^* \in A$ and for every $\zeta \in A \setminus \{\zeta^*\}$, the map $g$ has a finite angular derivative at $\zeta$. Hence, by Theorem~\ref{thm:bargmann}~part~(a), the map $g$ has infinitely many regular boundary  fixed points in $A$. By Theorem~A and the Correspondence Theorem, the union of the set of invariant accesses to $\infty$ from $U$ and the set of invariant accesses to weakly repelling fixed points of $f$ from $U$ is infinite, which immediately implies the statement~(b).

To show (c), suppose that $U$ is bounded. Then, by Proposition~\ref{degree}, we have $d < \infty$ and the claim follows immediately from the statement~(a).
\end{proof}

\section{Proofs of Theorems~D and~E and remaining results} \label{sec:TheoremE}

\begin{proof}[Proof of Theorem \rm D] Take $v \in \bd U$ and suppose that there are two different accesses to $v$ from $U$. By Lemma~\ref{lem:arc}, they are represented  topological arcs $\gamma_0, \gamma_1$ in $U$, disjoint up to the common landing point at $v$. Then $\gamma_0 \cup \gamma_1$ is a Jordan curve in $U \cup \{v\}$ and by Lemma~\ref{lem:no_boundary1}, both components of $\clC \setminus (\gamma_0 \cup \gamma_1)$ contain points from $\bd U \subset \mathcal J(N)$. This is a contradiction, since $V$ is contained in one component of $\clC \setminus (\gamma_0 \cup \gamma_1)$ and the complete invariance of $V$ implies $\mathcal J(N) = \bd V$. Hence, $U$ has at most one access to $v$.

Suppose now $d < \infty$. By Corollary~C, there are at least $\max(d -1, 1)$ invariant accesses to $\infty$ in $U$. Hence, in fact there is a unique access $\A$ to $\infty$ in $U$, it is invariant and $d \in \{1, 2\}$. In particular, this proves~(a).

Consider a Riemann map $\varphi: \D \to U$ and the inner function $g = \varphi^{-1} \circ N \circ \varphi: \D \to \D$ associated to $N_U$. By Theorem~A, the access $\A$ corresponds to a boundary fixed point $\zeta\in\bdd$. Since $g$ has finite degree, it extends to the Riemann sphere and has a derivative at $\zeta$, where $g'(\zeta) \in (0, +\infty)$.

Suppose that $d = 1$ and there is a pole $p \in \bd U \cap \C$ of $N$ accessible from $U$. Take a curve $\gamma_1$ in $U$ landing at $p$. Then $N(\gamma_1)$ is a curve in $U$ landing at $\infty$. Since $\A$ is the unique access to $\infty$ in $U$, the Correspondence Theorem implies that the curve $\varphi^{-1}(N(\gamma_1)))$ lands at $\zeta$ and the curve $\varphi^{-1}(\gamma_1)$ lands at some point $\zeta' \neq \zeta$ in $\bdd$. We can assume that $\varphi^{-1}(N(\gamma_1)))$ is contained in a small neighbourhood of $\zeta$. As $\deg g = 1$ and $g(\zeta) = \zeta$, the set $g^{-1}(\varphi^{-1}(N(\gamma_1)))$ is a curve landing at $\zeta$. This is a contradiction, since $\varphi^{-1}(\gamma_1)\subset g^{-1}(\varphi^{-1}(N(\gamma_1)))$ lands at $\zeta' \neq \zeta$. In this way we have showed the statement~(b).

To show the statement~(c), suppose now $d = 2$. Take a curve $\gamma$ in $U$ landing at infinity. Again, by the uniqueness of $\A$, the curve $\varphi^{-1}(\gamma)$ lands at $\zeta$ and we can assume that it is contained in a small neighbourhood of $\zeta$. 
Since the local degree of $g$ near $\zeta$ is $1$, the set $g^{-1}(\varphi^{-1}(\gamma))$ consists of two curves $\eta_0$ and $\eta_1$, such that $\eta_0$ lands at $\zeta$ and $\eta_1$ lands at some point $\zeta_1 \neq \zeta$ in $\bdd$. As $\varphi(\eta_1)$ is the preimage of $\gamma$ under $N$, it lands at $\infty$ or a pole of $N$ in $\bd U \cap \C$. The first case is impossible by the Correspondence Theorem, since $\A$ is the unique access to $\infty$ in $U$. This shows that $\bd U$ contains a pole accessible from $U$. To show that there are no other accessible poles in $\bd U$, we use arguments similar to those in the proof of the statement~(b).
\end{proof}

\begin{proof}[Proof of Proposition~\rm\ref{strongly}]
In one direction, the implication is trivial -- if $\A$ is strongly invariant, then for every $\gamma \in \A$, the curve $f(\gamma)$ lands at $v$. Suppose now that $\A$ is invariant and for every $\gamma \in \A$, the curve $f(\gamma)$ lands at some boundary point of $U$. Let $\varphi: \D \to U$ be a Riemann map and $g = \varphi^{-1} \circ f \circ \varphi: \D \to \D$ the inner function associated to $f_U$. By Theorem~A, the access $\cala$ corresponds to a boundary fixed point $\zeta\in\bdd$ of $g$. Suppose that for some $\gamma \in \cala$ the curve $f(\gamma)$ lands at a point of $\bd U$, such that $f(\gamma) \in \A'$ with $\cala'\neq \cala$. By the Correspondence Theorem, $\cala'$ corresponds to a point $\zeta'\in\bdd$ with $\zeta'\neq \zeta$. Observe that $\varphi^{-1}(\gamma)$ lands at $\zeta$ while $g(\varphi^{-1}(\gamma)) = \varphi^{-1}(f(\gamma))$ lands at $\zeta'$. Hence, by the Lehto--Virtanen Theorem, the radial limit of $g$ at $\zeta$ is equal to $\zeta'$, which is a contradiction with the fact that 
$\zeta$ is 
a boundary fixed point of $g$. Therefore, for every $\gamma \in \A$, we have $f(\gamma) \in \A$.  
\end{proof}

\begin{proof}[Proof of Proposition~\rm\ref{prop:sing}]
Let $\varphi: \D \to U$ be a Riemann map and $g = \varphi^{-1} \circ f \circ \varphi: \D \to \D$ the inner function associated to $f_U$. By Theorem~\ref{thm:bargmann}~part~(c), there exists $z \in W$, such that $\Sing(g)$ is the set of accumulation points of $g^{-1}(\varphi^{-1}(z))$. Since $f^{-1}(z) \cap U$ is contained in a finite number of curves landing at points of $\bd U$, by the Lindel\"of Theorem, the set $g^{-1}(\varphi^{-1}(z)) = \varphi^{-1}(f^{-1}(z) \cap U)$ is contained in a finite number of curves landing at points of $\bdd$. Hence, the set $\Sing(g)$ is finite, so every singularity of $g$ is isolated. On the other hand, since $\deg f|_U = \infty$, we have $\Sing(g) \neq \emptyset$. Therefore, $g$ has an isolated singularity in $\bdd$, so $f|_U$ is singularly nice.
\end{proof}

A part of Theorem~E is based on Proposition~\ref{poles}. 

\begin{proof}[Proof of Proposition \rm \ref{poles}]
We argue as in the proof of \cite[Lemma 11]{baker-dominguez}. 
Since $\infty$ is accessible from $U$, by the Lindel\"of Theorem there exists $\zeta\in\bdd$, such that the radial limit of $\varphi$ at $\zeta$ is equal to $\infty$.
Take $\zeta^* \in \Sing(g)$ and suppose $\zeta^* \notin \Acc(\Theta)$. Then $\zeta^* \in A$, where $A$ is an arc in $\bd \D$ such that $(A \setminus \{\zeta^*\}) \cap \Theta = \emptyset$. By Theorem~\ref{thm:bargmann}~part~(b), 
there exists $\zeta' \in A \setminus \{\zeta^*\}$ and a curve $\eta \subset \D$ landing at $\zeta'$, such that $g(\eta)$ lands at $\zeta$ in a Stolz angle. Since the radial limit of $\varphi$ at $\zeta$ is equal to infinity, it follows that $\varphi(g(\eta))=f(\varphi(\eta))$ lands at $\infty$.  Hence, any limit point of the curve $\varphi(\eta)$ is infinity or a pole of $f$. Since the limit set of $\varphi(\eta)$ is a continuum, in fact $\varphi(\eta)$ lands at infinity or a pole of $f$. By the Correspondence Theorem, this implies $\zeta \in \Theta$, which is a contradiction.
\end{proof}

We are now ready to prove Theorem~E.

\begin{proof}[Proof of Theorem \rm E] 
To show (a), assume $1 < d<\infty$ and suppose that $\bd U$ does not contain poles of $f$ which are accessible from $U$. Since infinity is accessible from $U$, there exists a curve $\gamma$ in $U$ landing at infinity. By the Lindel\"of Theorem, $\eta=\varphi^{-1}(\gamma)$ is a curve in $\D$ landing at a point $\zeta \in \bdd$.
Recall that $g$ extends to a finite Blaschke product of degree $d$ on $\chat$. 
Since $g(\D) \subset \D$ and $g(\clC \setminus \overline \D) \subset \clC \setminus \overline \D$, the Julia set $\mathcal J(g)$ is contained in $\bd \D$. Moreover, $\overline{\bigcup_{n= 0}^\infty g^{-n}(\zeta)} \supset \mathcal J(g)$ (see \cite[Theorem III.1.5]{carlesongamelin}). It follows that $\bigcup_{n= 0}^\infty g^{-n}(\zeta)$ is an infinite set in $\bdd$ and we can take an infinite sequence of disjoint points $\zeta_n \in g^{-n}(\zeta)$, $n = 1, 2,\ldots$. Then, taking a suitable inverse branch of $g^{-n}$ defined near $\zeta$, we construct a curve $\eta_n$ in $\D$ landing at $\zeta_n$ such that $g^n(\eta_n)=\eta$. We claim that $\gamma_n:=\varphi (\eta_n)$ lands at infinity for all $n\geq 1$. To show this, observe that $f(\gamma_1)=\varphi(g(\eta_1))=\varphi(\eta)=\gamma$ lands at infinity. Hence, $\gamma_1$ must land at infinity or at a pole of $f$ accessible from $U$. But the latter is impossible by assumption, thus $\gamma_1$ lands at infinity. Repeating the arguments by induction, we prove the claim.

We have showed that there are 
infinitely many curves $\eta_n$ in $\D$, such that $\eta_n$ lands at a point $\zeta_n \in \bdd$, the points $\zeta_n$ are disjoint and $\varphi(\eta_n)$ is a curve in $U$ landing at infinity. By the Correspondence Theorem, this defines infinitely many different accesses to infinity from $U$. From Theorem~A we know that at most $d+1$ of them are invariant.

To prove (b), assume $d=\infty$ and suppose that $\bd U$ contains only finitely many  poles of $f$ accessible from $U$. Then $\Sing(g)$ is non-empty so by Proposition~\ref{poles}, the set $\Theta$ is infinite. By the Correspondence Theorem, there are infinitely many accesses from $U$ to infinity or poles of $f$ in $\bd U$. 

Suppose that there are only finitely many accesses to infinity from $U$. Then there are infinitely many accesses to poles of $f$. Since by assumption, there are only finitely many poles accessible from $U$, there exists a pole $p$ with infinitely many accesses from $U$. Note that $f$ near a pole $p$ is a finite degree covering, possibly branched at $p$. Therefore, by Lemma~\ref{lem:no_boundary1}, if $p$ has infinitely many accesses from $U$, then $\infty$ has infinitely many accesses from $U$, which is a contradiction. Hence, there are infinitely many accesses to infinity from $U$. 
\end{proof}


\section{Examples} \label{sec:examples}

In this section we present a number of examples which illustrate our results. We start by a simple, well-know example which, however, exhibits many phenomena described in previous sections.

\begin{ex} \label{ex1}
Let  $f(z)=z+\tan z$. Then the following hold.
\begin{enumerate}[(a)]
\item The Fatou set $\mathcal{F}(f)$ consists of two completely invariant Baker domains 
\[
\H_+:=\{z\in\C\mid \Im(z)>0\}, \qquad \H_-:=\{z\in\C\mid \Im(z)<0\},
\]
in particular, $\deg f|_{\H_\pm} = \infty$.
\item The Julia set $\mathcal{J}(f)$ is equal to $\R \cup \{\infty\}$.
\item Each Baker domain $\H_\pm$ has a unique access to infinity, which is invariant but not strongly invariant.
\item $\bd \H_\pm$ contains infinitely many weakly repelling fixed points and infinitely many accessible poles of $f$.
\item The inner function associated to $f|_{\H_\pm}$ has exactly one singularity in $\partial \D$, so $f|_{\H_\pm}$ is singularly nice. 

\end{enumerate}
\end{ex}

\begin{proof}
Given the expression
\begin{equation}\label{eq:tan}
\tan (x+i y)= \frac{\sin 2x}{\cos 2x + \cosh 2y} + i \left( \frac{\sinh 2y}{\cos 2x + \cosh 2y}\right)
\end{equation}
for $x,y\in \R$, it is straightforward to check that the map is symmetric with respect to $\R$, $f(\R) = \R \cup \{\infty\}$ and $f(\H_\pm) \subset \H_\pm$, so $\H_\pm$ is contained in a Fatou component of $f$. Observe also that $z\mapsto \tan z$ has two asymptotic values $\pm i$ with asymptotic tracts in $\H_\pm$, respectively, and
\[
f(z) \simeq z\pm i  \quad \text{if} \quad  \Im (z) \to \pm\infty.
\]
This shows that $\H_\pm$ are in different Fatou components, which gives~(a) and~(b). 
Since $\bd\H_\pm \cap \C = \R$ is connected, it has a unique access to $\infty$ (cf.~Lemma~\ref{lem:no_boundary1}). Clearly, it is the dynamical access defined by the invariant curve $\gamma(t):=\pm i t$ for $t\geq 1$, and it is invariant. To see that it is not strongly invariant, observe that a small neighbourhood $V_k = V_0 + k\pi$ of the pole $p_k:=\pi/2 + k \pi$, $k\in\Z$ is mapped under $f$ onto a neighbourhood $W$ of infinity. Hence, given a point $z^* \in \H_\pm$ with large imaginary part, there exists a sequence of preimages $z_k\in f^{-1}(z^*) \cap \H_\pm$, $k \in \Z$ such that $|z_k-p_k|< 1$. Therefore, we can take an (almost horizontal) curve $\eta$ in $\H_\pm$, landing at $\infty$ and containing all the points $z_k$ for $k > 0$. Then $f(\eta)$ is a curve that does not land at $\infty$, since it has to go through $z^*$ infinitely many times. This shows that the unique access to $\infty$ is not strongly invariant, giving~(c). 

To prove~(d), it is sufficient to notice that $\bd \H_\pm = \R \cup \{\infty\}$ contains repelling fixed points $v_k = k\pi$ and the poles $p_k$ of $f$ for $k \in \Z$.

Since a Riemann map $\varphi_\pm: \D \to \H_\pm$ is M\"obius, the inner function $g_\pm = \varphi_\pm^{-1} \circ f \circ \varphi_\pm$ has a unique singularity $\varphi_\pm^{-1}(\infty)$, which is the Denjoy--Wolff point for $g_\pm$. This shows~(e). Note also that by the Wolff Boundary Schwarz Lemma, $\Im(f(z)) > \Im (z) $ if $z\in \H_+$ and, by symmetry, $\Im(f(z)) < \Im (z) $ when $z\in \H_-$. 

\end{proof}

\begin{rem*}
Observe that $f(z) = z + \tan z$ provides an example of a meromorphic map with a Baker domain $U$ whose boundary is a Jordan curve in $\clC$ but $f|_U$ is not univalent. This is never the case for entire functions, as shown by Baker and Weinreich in \cite{bw}. Other interesting examples of meromorphic maps with invariant half-planes $\H_\pm$ of the form $f(z) = az +b + \sum_{j= 1}^\infty c_j/(z - d_j)$ with $a, c_j > 0$, $b, d_j \in \R$ are described in \cite{bargmann}. 
\end{rem*}

All the following examples are Newton's methods of finding zeroes of  transcendental entire functions. Recall that for such maps all Fatou components are simply connected (see \cite{bfjk}).

\begin{ex} \label{ex5}
Let $f(z)=z-\tan z $, Newton's method of $F(z)=\sin z $. Then the following hold.
\begin{enumerate}[\rm (a)]
\item $f$ has infinitely many immediate basins of attraction $U_k$, $k\in\Z$, of superattracting fixed points $c_k:=k\pi$, such that $\deg f|_{U_k} = 3$. 
\item Each basin $U_k$ has exactly two accesses to infinity, and they are strongly invariant. 
\item $\bd U_k$ contains exactly two accessible poles of $f$.
\item The basins $U_k$ are the only periodic components of $\mathcal F(f)$.
\end{enumerate}
See Figure~\ref{dynplanesine}.
\end{ex}

\begin{figure}[htb!]
\includegraphics[width=0.48\textwidth]{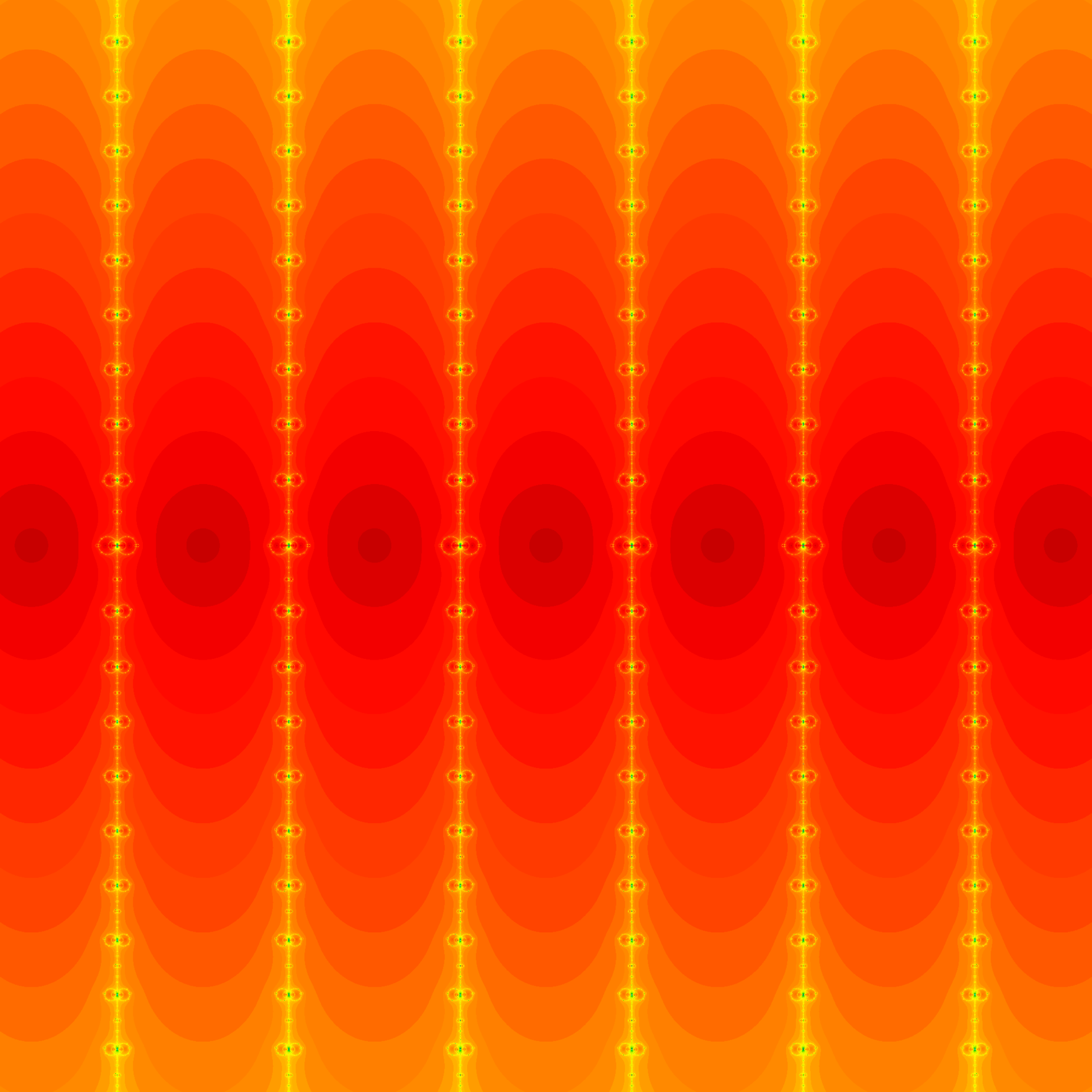}
\hfill
\includegraphics[width=0.48\textwidth]{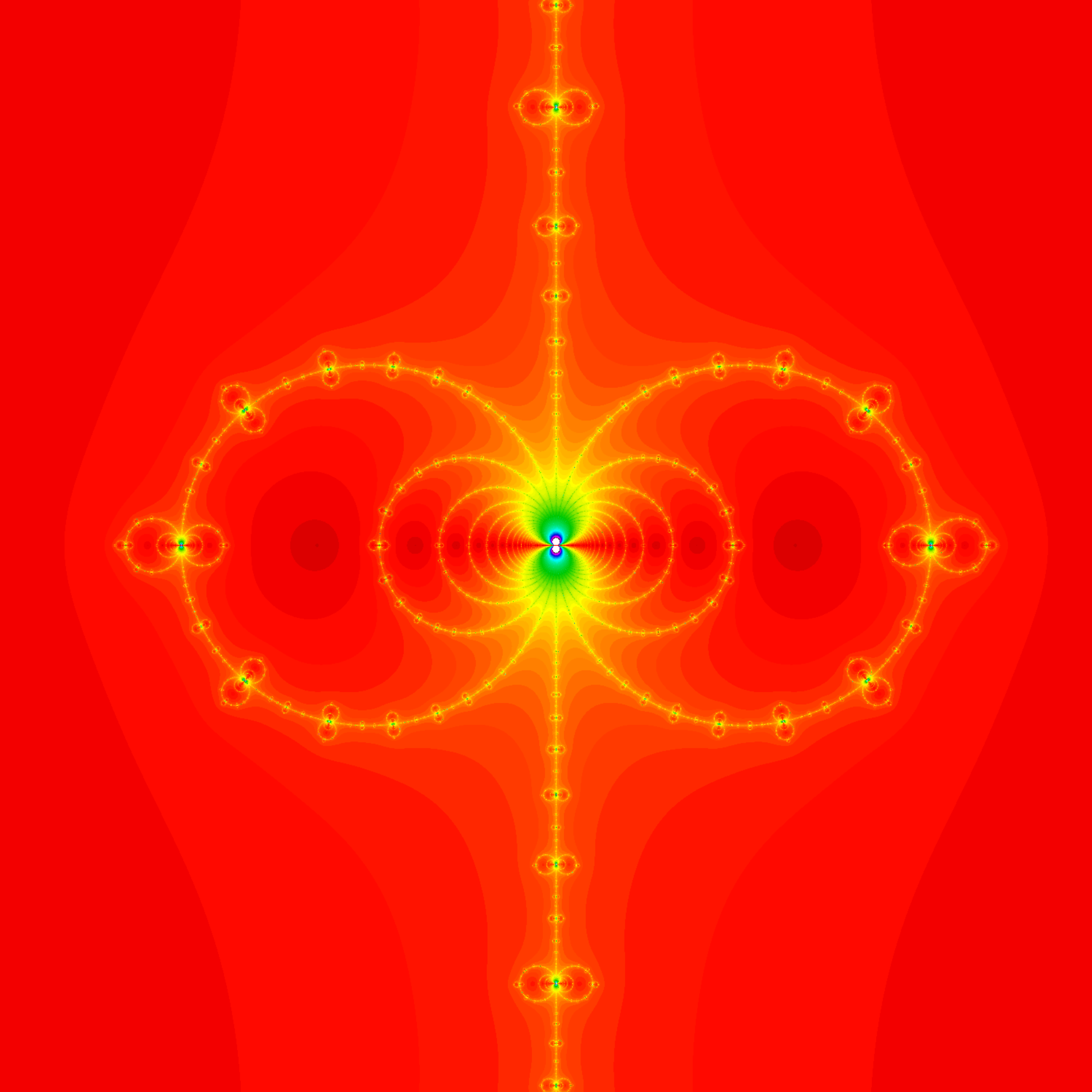}
\caption{\label{dynplanesine} \small Left: The dynamical plane of the map $f$ from Example~\ref{ex5}, showing the invariant attracting basins $U_k$. Right: Zoom of the dynamical plane near a pole $p_k$.}
\end{figure}

\begin{proof}
Note that the points $c_k$, $k \in \Z$ (the zeroes of $\sin z$), are the only fixed points of $f$, they are superattracting and satisfy $f'(c_k)=f''(c_k)=0$, while $f'''(c_k)\neq 0$. Let $U_k$ be the immediate basin of attraction of $c_k$. Since the local degree of $f$ near $c_k$ is $3$, $f|_{U_k}$ has  degree at least $3$. 

It is straightforward to check that the asymptotics of $f$ is as follows:    
\begin{equation}\label{eq:asym}
f(z) \simeq \begin{cases}
z-i & \text{if} \quad \Im (z) \to +\infty \\
z+i & \text{if} \quad \Im (z) \to -\infty.
\end{cases}
\end{equation}
Moreover, the lines
$$r_k(t):=\frac \pi 2 + k \pi + it, \quad \text{for} \quad t \in \R, \; k \in\Z$$
are invariant and contain all the poles
\[
p_k := \pi/2 + k \pi
\]
of $f$. In particular, the above facts imply that $f$ has no finite asymptotic values. 

Now we prove that
\begin{equation}\label{eq:r_k}
r_k \subset \mathcal J(f).
\end{equation}
To see this, note that for $z \in r_k$ we have 
\[
f(z)=f\left(\frac\pi 2 + k \pi+ it\right) = \frac \pi 2 + k \pi+ ig(t),
\]
where
\[
g(t) = t - \coth t.
\]
The function $g$ maps each half-line $\R^+$, $\R^-$ homeomorphically onto $\R$ and
\[
g'(t) = 1 + \frac{1}{\sinh^2 t} > 1
\]
for $t \in \R \setminus \{0\}$ (see Figure~\ref{graphsine}, left).

\begin{figure}[htb!] 
\includegraphics[height=0.35\textwidth]{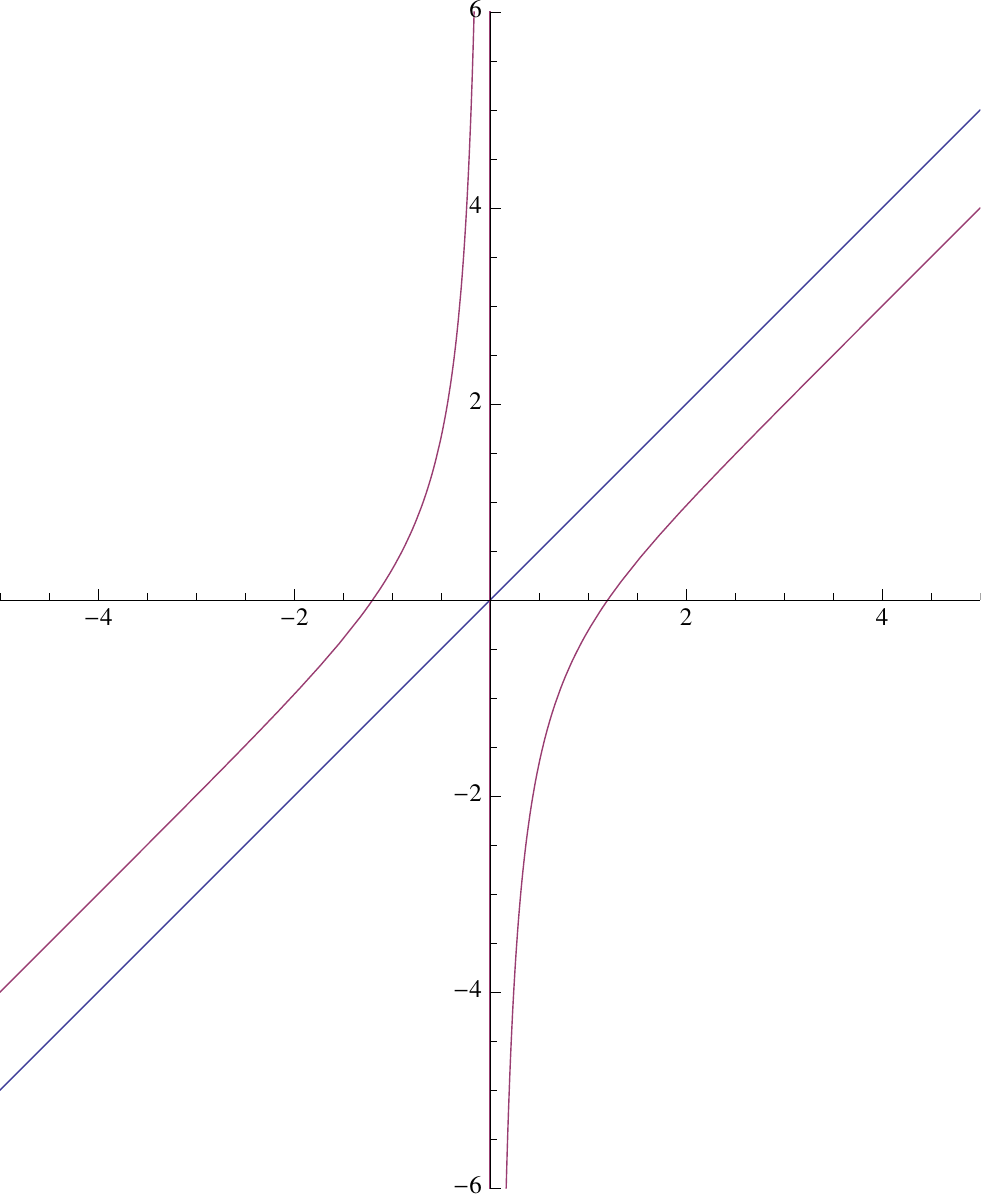}\hfil
\includegraphics[height=0.35\textwidth]{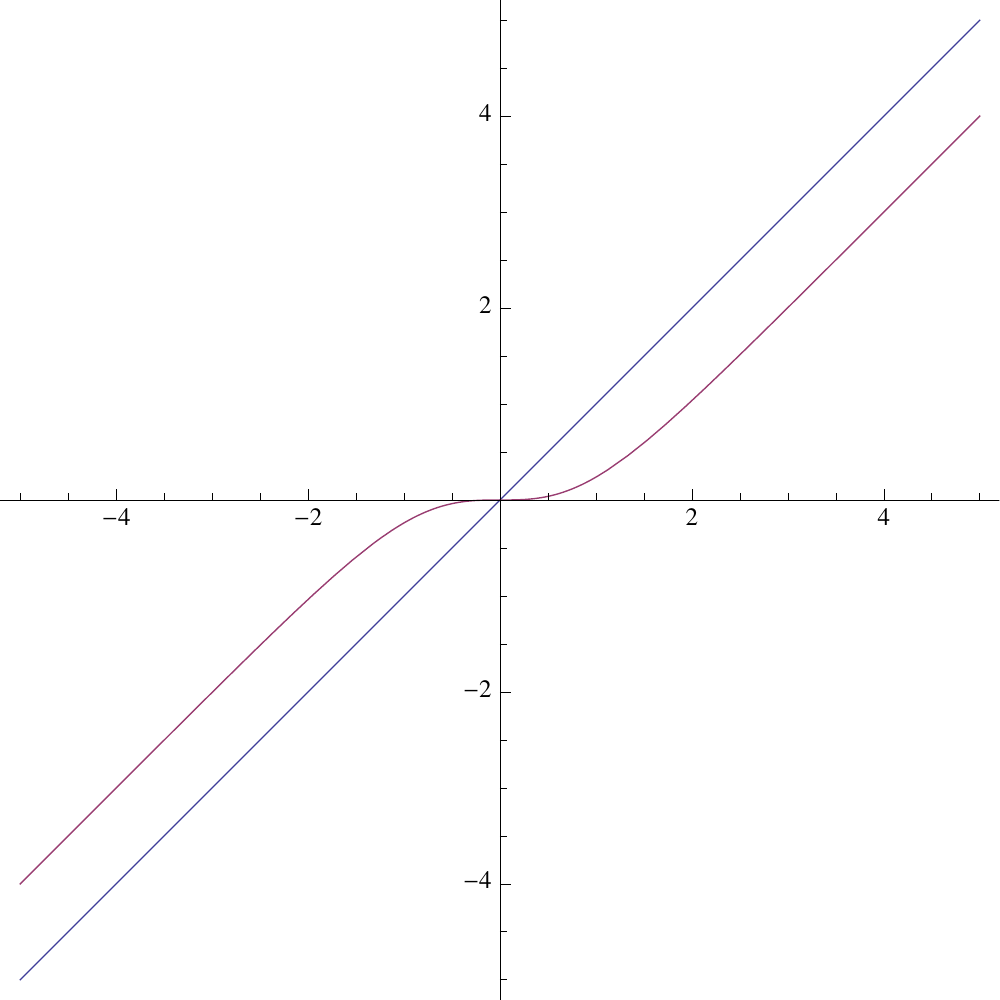}
\caption{\label{graphsine}\small Left: 
The graph of the map $f$ from Example~\ref{ex5} restricted to the vertical line $r_k$, plotting $t\mapsto g(t)=\Im(f(r_k(t)))$. Right: The graph of $f$ restricted to the vertical line $l_k$, plotting $t\mapsto \Im(f(l_k(t)))$.}
\end{figure}

This together with \eqref{eq:asym} implies that for every open interval $I \subset \R \setminus \{0\}$ there exists $n \ge 0$ such that $0 \in g^n(I)$. 
Consequently, for every neighbourhood $W$ of a point $z \in r_k$, the set $f^n(W)$ contains a pole of $f$ for some $n \ge 0$. Since 
\begin{equation}
\label{eq:J}
\mathcal J(f) = \overline{\bigcup_{n = 0}^\infty \bigcup_{k\in\Z} f^{-n}(p_k)},
\end{equation}
this proves \eqref{eq:r_k}. 
It then follows that the basin $U_k$ lies in the vertical strip between $r_{k-1}$ and $r_{k}$. Hence, by \eqref{eq:asym} and Theorem~\ref{degree}, $\deg f|_{U_k}$ is finite. Recall that we showed $\deg f|_{U_k} \geq 3$. The map $f$ has no singular values other than the fixed critical points $c_k$ and no other critical points. Hence, by the Riemann--Hurwitz formula, $\deg f|_{U_k} = 3$. This proves~(a).

To prove~(b), observe that $U_k$ has two (distinct) accesses to $\infty$ defined by two invariant vertical half-lines $l_k^+, l_k^- \subset U_k$, where
\[
l_k^+(t)=\pi k + it, \quad l_k^-(t)=\pi k - it \quad \text{for} \quad t\geq 0,\; k \in \Z.
\]
By \eqref{eq:asym}, if $\gamma$ is a curve in $U_k$ landing at $\infty$, then $f(\gamma)$ also lands at $\infty$, so by Proposition~\ref{strongly}, the two accesses are strongly invariant.

To show that these are the only two accesses to $\infty$ from $U_k$, we will prove that 
\begin{equation} \label{union}
\mathcal{J}(f) \cap \C = \bigcup_{k\in\Z} J_k,
\end{equation}
where $J_k$ is the connected component of $\mathcal{J}(f) \cap \C$ containing the line $r_k$ (and hence the pole $p_k$). To that end, note that 
the post-singular set of $f$ (the closure of the forward trajectories of all singular values of $f$) is equal to $\{c_k \mid k\in\Z\} \cup \{\infty\}$, so all inverse branches of $f^{-n}$, $n > 0$ are defined on $r_k$. This implies that the set $f^{-1}(r_k)$ consists of $r_k$ itself and an infinite number of disjoint topological arcs $A_l$, $l \in \Z \setminus \{k\}$, such that $A_l \cap \bigcup_k l_k = \emptyset$, where
\[
l_k = l_k^+ \cup l_k^- \subset U_k,
\]
and both ends of $A_l$ land at the pole $p_l$.  
It follows that the set 
\[
R^n :=f^{-n}\left(\bigcup_{k\in\Z} r_k \cup \{\infty\}\right), \quad n > 0
\]
consists of connected components $R^n_k$, $k \in \Z$, such that $R^n_k$ lies in the vertical strip between the lines $l_k$ and $l_{k+1}$, and
\[
R^1_k \subset R^2_k \subset  R^3_k \subset\cdots.
\]
Consequently, for each $k\in\Z$, the set $\overline{\bigcup_{n=1}^\infty R^n_k}$ contains $r_k$ and is connected. By \eqref{eq:r_k} and \eqref{eq:J}, we obtain
\[
\mathcal J(f) \cap \C = \overline{\bigcup_{n=1}^\infty R^n} = \bigcup_{k\in\Z}\overline{\bigcup_{n=1}^\infty R^n_k},
\]
which shows that $J_k = \overline{\bigcup_{n=1}^\infty R^n_k}$ and proves \eqref{union}. Now \eqref{union} and Lemma~\ref{lem:no_boundary1} show that $U_k$ has only two accesses to $\infty$, which ends the proof of (b).

To show~(c), it is enough to check that the preimages of the lines $l_k^\pm$ (respectively $l_{k+1}^\pm$) under suitable inverse branches of $f^{-1}$ are curves in $U_k$ landing at the pole $p_{k-1}$ (respectively $p_k$). This implies that the poles $p_{k-1}$, $p_k$ are accessible from $U_k$.

To prove~(d), recall that $f$ has no singular values apart from the critical points $c_k$, hence any periodic Fatou component is either one of the superattracting basins $U_k$ or a Baker domain. But the latter is not possible for if $U$ were a Baker domain of $f$, then curves in its dynamical access to infinity would have unbounded imaginary part, which is prevented by \eqref{eq:asym}.

\end{proof}

\begin{ex} \label{ex2} 
Let $f(z)=z+i + \tan z$, Newton's method of  $F(z)=\exp \left(-\int_0^z \frac{du}{i+\tan u}\right)$.
Then we show the following properties. 
\begin{enumerate}[\rm (a)]
\item $f$ has a completely invariant Baker domain $U$ (in particular $\deg f|_U = \infty$), which contains the upper half-plane $\H_+$ and the vertical lines $l_k:=\{\Re (z)=k\pi\}$ for all $k\in\Z$.
\item The Baker domain $U$ has infinitely many strongly invariant accesses to infinity and the dynamical access to infinity from $U$ is invariant but not strongly invariant.

\item $\partial U$ contains infinitely many accessible poles of $f$.

\item The inner function associated to $f|_U$ has a unique singularity in $\bdd$, so $f|_U$ is singularly nice.

\item The map $f$ has infinitely many invariant Baker domains $U_k$, $k\in\Z$, such that $U_k$ contains the vertical half-lines $s_k^-:=\{ \Re (z) = \frac{\pi}{2} + k \pi, \; \Im (z) <0 \}$ and $\deg f|_{U_k} = 2$.

\item The Baker domain $U_k$ has exactly one access to infinity, and it is strongly invariant. 

\item $\bd U_k$ contains exactly one accessible pole of $f$.

\end{enumerate}
\end{ex}

See Figure~\ref{dynplane2}.
\begin{figure}[htb!]
\includegraphics[width=0.7\textwidth]{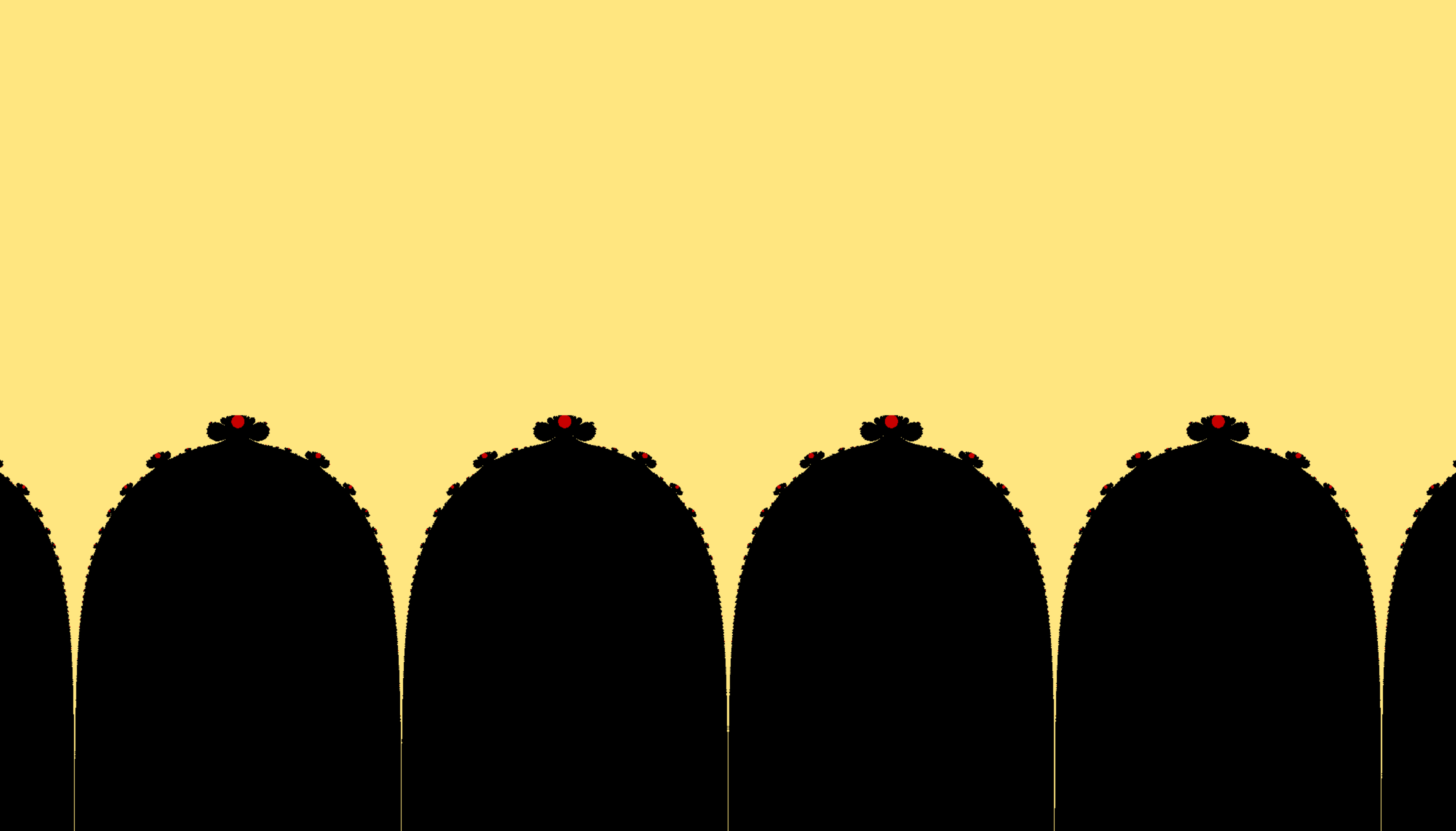}
\caption{\label{dynplane2} \small The dynamical plane of $f$ from Example~\ref{ex2}, showing the invariant Baker domain $U$ of infinite degree (yellow) and invariant Baker domains $U_k$ of degree~$2$ (black).
}
\end{figure}

\begin{proof}
Observe that $f$ has no fixed points, since $z=-i$ is an omitted value for the tangent map. Note that by \eqref{eq:tan}, we have asymptotically 
\begin{equation}\label{eq:asympt2}
f(z) \simeq \begin{cases}
z+2i & \text{if} \quad \Im (z) \to +\infty \\
z& \text{if} \quad \Im (z) \to -\infty.
\end{cases}
\end{equation}
Also from \eqref{eq:tan}, it follows that the lines $l_k$ are forward invariant and $\Im (f(z))  > \Im (z)$ for $z\in \H_+ \cup L$, where $L=\bigcup_{k\in\Z} l_k$. Hence, the set $\H_+ \cup L $ is forward invariant and $\Im(f^n(z)) \to +\infty$ as $n \to \infty$ for $z \in \H_+ \cup L$. Therefore, $\H_+ \cup \mathcal{L}$ is contained in an invariant Baker domain $U$. The real line is mapped infinitely-to-one to the horizontal line $l:=\{\Im (z) =1\}$ together with $\infty$, which shows that $\deg f|_U = \infty$. Moreover, there are no other preimages of $l$, hence $U$ is completely invariant. This finishes the proof of~(a).

The dynamical access to $\infty$ from $U$ is the equivalence class of the invariant curve $\gamma(t) = it$, $t \ge 0$, so it is invariant. Arguing as in Example~\ref{ex1}, we construct a curve $\eta$ in this access, such that $\eta$ contains infinitely many preimages under $f$ of a given point $z^*$ with a large imaginary part (they are located nearby the poles). Then it is clear that $f(\eta)$ cannot land at infinity, and hence the dynamical access to $\infty$ is not strongly invariant. In fact, there are two curves $\eta_1, \eta_2$ in the dynamical access to infinity from $U$, such that its union contains the whole set $f^{-1}(z^*)$. Considering a Riemann map $\varphi:\D\to U$ and the inner function $g=\varphi^{-1} \circ f \circ \varphi$ associated to $f|_U$ and using the Correspondence Theorem, we see that the curves $\varphi^{-1}(\eta_1)$, $\varphi^{-1}(\eta_2)$ land at a common point in $\partial \D$ and $\varphi^{-1}(\eta_1) \cup \varphi^{-1}(\eta_2)$ contains all preimages under $g^{-1}$ of a given point in $\D$. We conclude from Theorem~\ref{thm:bargmann}~part~(c) that $g$ has exactly one singularity, so $f|_U$ is singularly nice. By Corollary~C, the Baker domain $U$ has infinitely many invariant accesses to infinity. Observe that these accesses correspond to curves homotopic to $l_k^-$, where
\[
l_k^-(t) = k\pi -it \quad \text{for} \quad t \ge 0,
\]
and they lie between $s_k^-$ and $s_{k+1}^-$. By \eqref{eq:asympt2}, the image of any curve landing at infinity in such access also lands at infinity. It follows from Proposition~\ref{strongly} that these accesses are strongly invariant. This proves~(b) and~(d). 

Since the poles 
$$p_k := \frac{\pi}{2} + k\pi, \quad k \in \Z$$
of $f$ are the endpoints of $s_k^-$, they are accessible from $U$, which shows~(c).

To show~(e), notice that the vertical lines 
\[
s_k = \left\{\Re (z) =\frac \pi 2+k\pi\right\}
\]
are invariant under $f$. Each of them contains the pole $p_k$ (at its intersection with $\R$) and two critical points, symmetric with respect to the real line. By \eqref{eq:tan}, on the half-lines $s_k^-$ we have $\Im (f(z)) <  \Im (z)$. Since $f$ has no fixed points, this implies that $\Im(f^n(z))$ tends to $-\infty$ as $n \to \infty$ for $z\in s_k^-$. Moreover, $s_k^- \subset \mathcal{F}(f)$. To see this, consider the half-strip
\[
S_k = \left\{\left|\Re (z) - \frac \pi 2 - k\pi\right| < \frac \pi 8, \; \Im(z) < -\frac {\ln 2} 2 \right\}. 
\]
Using \eqref{eq:tan}, one may check directly that $f(\overline{S_k}) \subset S_k$, which implies that $s_k^- \cup S_k$ is contained in an invariant Fatou component of $f$. Since the points in this set escape to infinity under iteration, this Fatou component is an invariant Baker domain which we denote by $U_k$. The domains $U_k$, $k \in \Z$ are disjoint, because they are separated by the lines $l_k \subset U$. Moreover, each half-line $s_k^-$ contains a critical point $c_k$. We conclude that $f$ has infinitely many Baker domains $U_k, {k\in\Z}$, such that $U_k$ contains the half-line $s_k^-$, the critical point $c_k$ 
and the unique accessible pole $p_k$ in its boundary. In particular, this shows~(g).

By \eqref{eq:asympt2} and the fact that every $U_k$ contains exactly one pole of $f$ on its boundary, it is clear that the map $f|_{U_k}$ has finite degree and hence $\deg f|_{U_k} \in \{1,2\}$ by Theorem~D. Since $U_k$ contains a critical point, in fact $\deg f|_{U_k} = 2$. This ends the proof of~(e).

To show~(f), note that by Theorem~D, each $U_k$ has a unique access to infinity (the dynamical access containing $s_k^-$) and it is invariant. In fact, this access is strongly invariant by Proposition~\ref{strongly} and the asymptotic estimates \eqref{eq:asympt2}. 
\end{proof}

\begin{ex} \label{ex3} 
Let $f(z)=\frac{e^z(z-1)}{e^z+1}$, Newton's method of $F(z)=z+e^z$. Then  the following hold. 
\begin{enumerate}[\rm (a)]
\item $f$ has a completely invariant immediate basin of attraction $U_0$ of a superattracting fixed point $c_0\in \R^-$ (in particular $\deg f|_{U_0} = \infty$) such that $U_0$ contains the real line and the region 
$$R = \{x+iy \mid |y| \le c e^{-x},\; x < x_0\},$$
for some $c > 0$, $x_0 < 0$.

\item $U_0$ has infinitely many invariant accesses to infinity.

\item The inner function associated to $f|_{U_0}$ has a unique singularity, so $f|_{U_0}$ is singularly nice. 

\item $f$ has infinitely many basins of attraction $U_k$, $k \in \Z\setminus \{0\}$ of superattracting fixed points $c_k \in \C \setminus \R$, such that $U_k$ has at most one access to infinity.
 
\end{enumerate}

See Figure~\ref{dynplane3}.

\begin{figure}[htb!]
\includegraphics[width=0.48\textwidth]{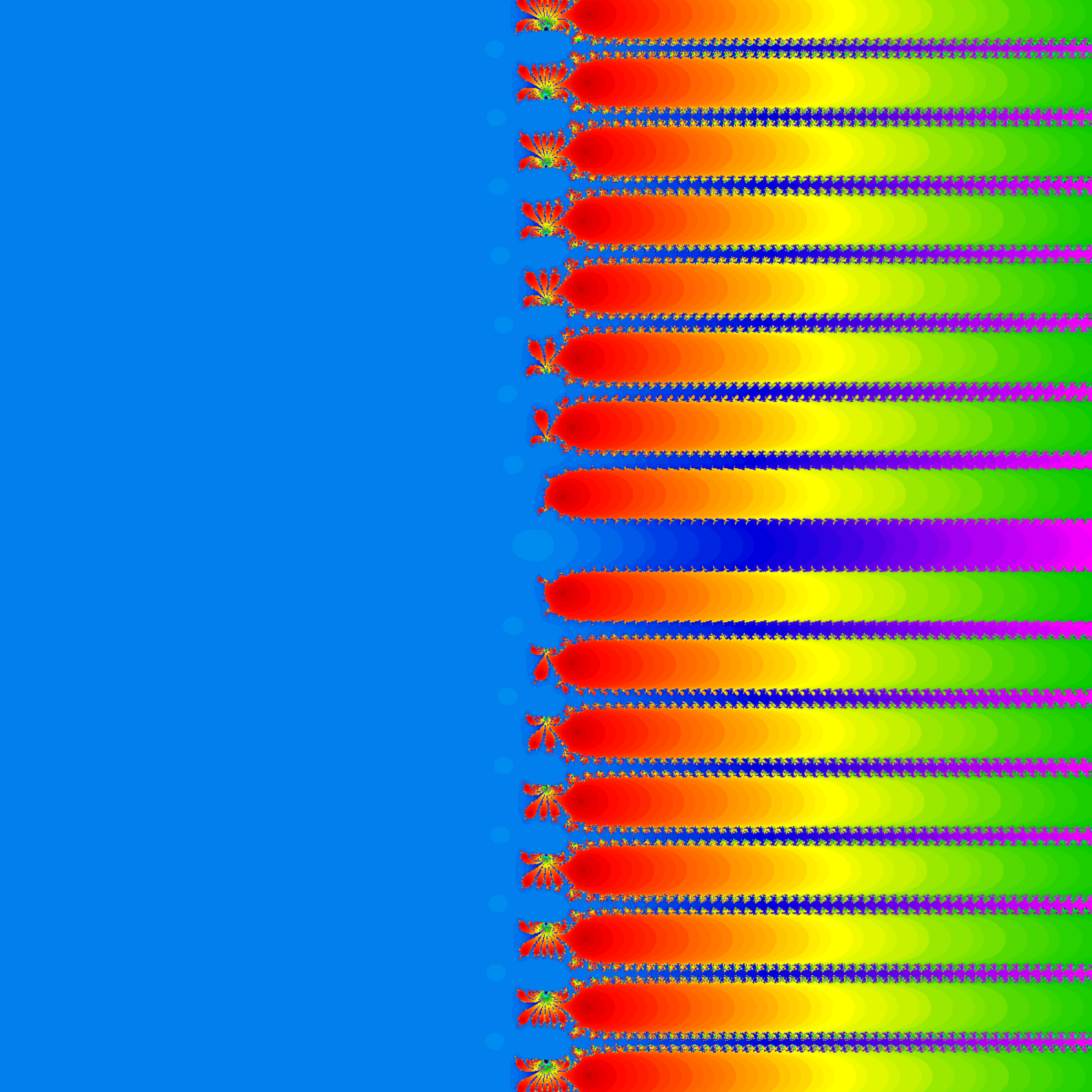}
\hfill
\includegraphics[width=0.48\textwidth]{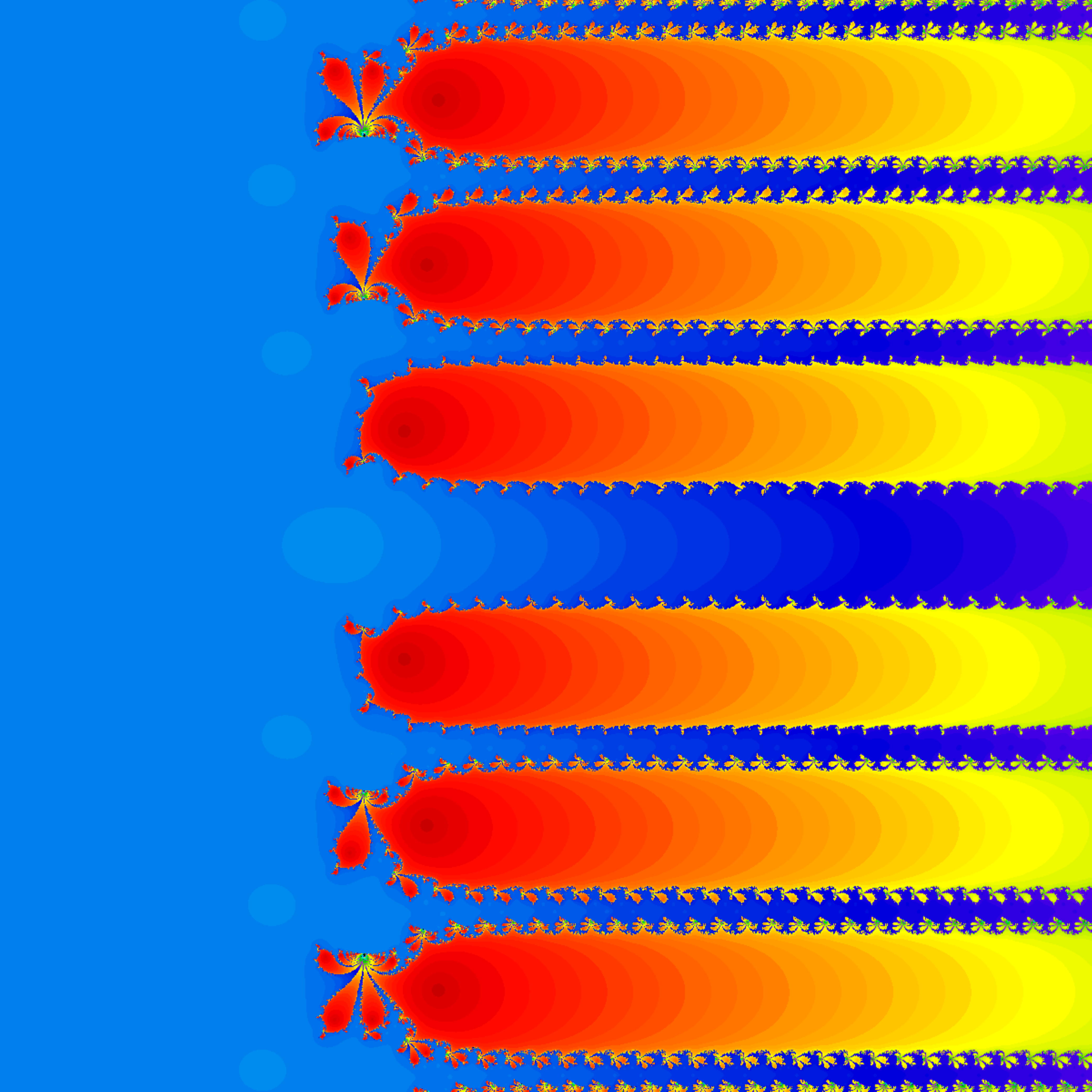}
\caption{\label{dynplane3} \small Left: The dynamical plane of the map $f$ from Example~\ref{ex3}, showing the invariant attracting basin $U_0$ (blue) and other attracting basins $U_k$. Right: Zoom of the dynamical plane. 
}
\end{figure}

\begin{proof}
We start by observing that the real line is invariant under $f$. The graph of $f|_\R$ is presented in Figure~\ref{graph3}.
\begin{figure}[htb!]
\includegraphics[width=0.35\textwidth]{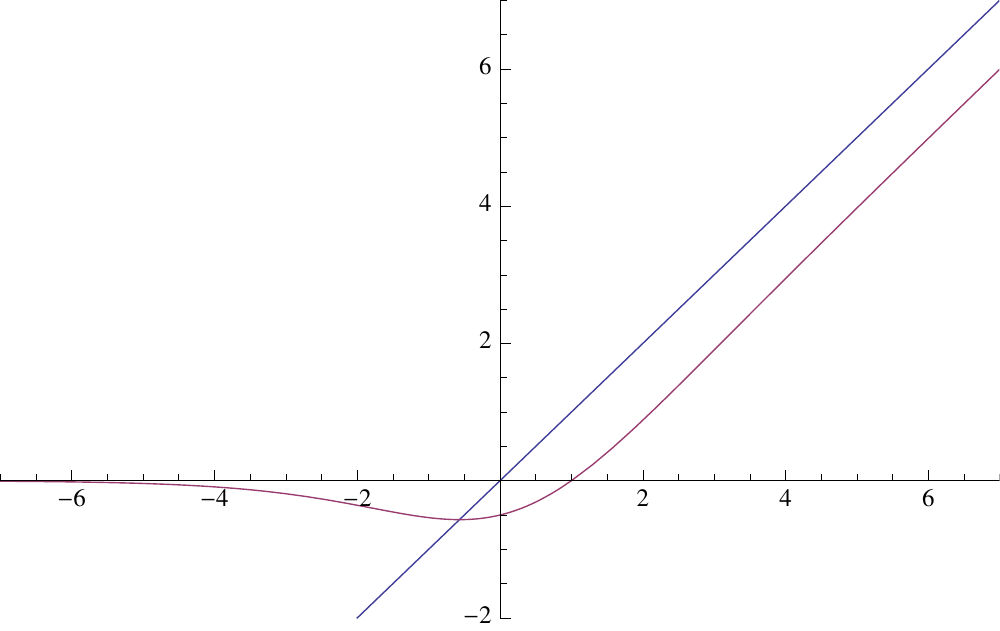}
\caption{\label{graph3} \small The graph of the map $f$ from Example~\ref{ex3} restricted to the real line.}
\end{figure}
In particular, the equation $F(z) = z + e^z = 0$ has a unique (simple) real zero $c_0$.  Moreover, there are infinitely many non-real (simple) zeroes of this equation, which we denote by $c_k$, $k \in \Z \setminus \{0\}$. Observe that $c_k$ are superattracting fixed points of $f$, and there are no other ``free'' critical points of $f$. Denote by $U_k$, $k \in \Z$ the immediate basins of attraction of the points $c_k$.

Every point in $\R$ converges to $c_0$ under the iteration of $f$, including the (unique) asymptotic value $v=0$ of $f$ and its unique preimage $u=1$. The asymptotic value $v$ has a unique asymptotic tract. It is straightforward to check that the region $R$ is mapped by $f$ into a small neighbourhood of $0$ (provided $c$ is sufficiently small and $x_0$ has sufficiently large negative part), so it is contained in $U_0$.
On the other hand, for a point $w$ in a small neighbourhood of $0$, the set $f^{-1}(w)$ consists of infinitely many points in $R \subset U_0$ and exactly one point in a small disc around $u=1$, which is contained in $U_0$. This implies that the basin $U_0$ is completely invariant, in particular $\deg f|_{U_0} = \infty$. This ends the proof of~(a).

Consider a Riemann map $\varphi: \D \to U_0$ and the inner function $g = \varphi^{-1}\circ f \circ \varphi: \D \to \D$ associated to $f|_{U_0}$. 
Notice that the half-line $R \cap \R^-$ defines an access $\A$ to infinity from $U_0$. Moreover, every sequence of points in $R$ converging to $\infty$ lies on a curve $\gamma$ in $R$ landing at $\infty$, such that $\gamma \in \A$. Hence, by the Correspondence Theorem, $\overline{\varphi^{-1}(R)} \cap \bdd$ is equal to a unique point $\zeta \in \bdd$ corresponding to the access $\A$. Since $R$ contains all but one preimages of a given point $w \in U_0$ sufficiently close to $0$, by Theorem~\ref{thm:bargmann}~part~(c), the point $\zeta$ is the unique singularity of $g$. It follows that $f_{U_0}$ is singularly nice so, by Corollary~C, the basin $U_0$ has infinitely many invariant accesses to infinity, which proves~(b) and~(c).  

Since $U_0$ is completely invariant, it follows from Theorem~D that for every $k\in\Z\setminus\{0\}$, the attracting basin $U_k$ has at most one access to infinity, which shows~(d). Since $U_k$ contains a critical point of $f$, Theorem~D implies also that $\deg f|_{U_k}$ is equal to $2$ or $\infty$.

\end{proof}

\begin{rem*}
Computer pictures suggest that the accesses to infinity in the basins $U_k$, $k \in \Z \setminus \{0\}$ in the right half-plane are asymptotically horizontal. If this were true, given that $f(z)\sim z-1$ when $\Re (z) \to \infty$, it would imply that all of them are strongly invariant. Indeed, in this case any curve tending to infinity with real part tending to $+\infty$, satisfies that its image also tends to $\infty$. Hence, by Proposition~\ref{strongly}, the corresponding access is strongly invariant. Moreover, the degree of $f$ on $U_k$ would be finite, so by Theorem~D, $\deg f|_{U_k} = 2$ and $\bd U$ contains exactly one pole of $f$ accessible from $U_k$.
\end{rem*}

\end{ex}

\bibliography{accesses}

\begin{thebibliography}{10}

\bibitem{baker75}
Irvine~N. Baker.
\newblock The domains of normality of an entire function.
\newblock {\em Ann. Acad. Sci. Fenn. Ser. A I Math.}, 1(2):277--283, 1975.

\bibitem{baker-dominguez}
Irvine~N. Baker and Patricia Dom{\'{\i}}nguez.
\newblock Boundaries of unbounded {F}atou components of entire functions.
\newblock {\em Ann. Acad. Sci. Fenn. Math.}, 24(2):437--464, 1999.

\bibitem{bw}
Irvine~N. Baker and J.~Weinreich.
\newblock Boundaries which arise in the dynamics of entire functions.
\newblock {\em Rev. Roumaine Math. Pures Appl.}, 36(7-8):413--420, 1991.
\newblock Analyse complexe (Bucharest, 1989).

\bibitem{hairs}
Krzysztof Bara{\'n}ski.
\newblock Trees and hairs for some hyperbolic entire maps of finite order.
\newblock {\em Math. Z.}, 257(1):33--59, 2007.

\bibitem{absorb}
Krzysztof {Bara\'nski}, N\'uria {Fagella}, Xavier {Jarque}, and Bogus{\l}awa
  {Karpi\'nska}.
\newblock {Absorbing sets and Baker domains for holomorphic maps}.
\newblock Preprint arXiv:1401.7292, 2014.

\bibitem{bfjknew}
Krzysztof {Bara\'nski}, N\'uria {Fagella}, Xavier {Jarque}, and Bogus{\l}awa
  {Karpi\'nska}.
\newblock On the connectivity of the {J}ulia set of {N}ewton maps: {A} unified
  approach.
\newblock In preparation, 2014.

\bibitem{bfjk}
Krzysztof {Bara\'nski}, N\'uria {Fagella}, Xavier {Jarque}, and Bogus{\l}awa
  {Karpi\'nska}.
\newblock On the connectivity of the {Julia} sets of meromorphic functions.
\newblock {\em Invent. Math.}, 198(3):591--636, 2014.

\bibitem{coding}
Krzysztof Bara{\'n}ski and Bogus{\l}awa Karpi{\'n}ska.
\newblock Coding trees and boundaries of attracting basins for some entire
  maps.
\newblock {\em Nonlinearity}, 20(2):391--415, 2007.

\bibitem{bargmann}
Detlef Bargmann.
\newblock Iteration of inner functions and boundaries of components of the
  {F}atou set.
\newblock In {\em Transcendental dynamics and complex analysis}, volume 348 of
  {\em London Math. Soc. Lecture Note Ser.}, pages 1--36. Cambridge Univ.
  Press, Cambridge, 2008.

\bibitem{bfr}
Walter Bergweiler, N{\'u}ria Fagella, and Lasse Rempe.
\newblock Hyperbolic entire functions with bounded {F}atou components.
\newblock preprint arXiv:1404.0925, 2014.

\bibitem{carlesongamelin}
Lennart Carleson and Theodore~W. Gamelin.
\newblock {\em Complex dynamics}.
\newblock Universitext: Tracts in Mathematics. Springer-Verlag, New York, 1993.

\bibitem{col-loh}
Edward~F. Collingwood and Arthur~J. Lohwater.
\newblock {\em The theory of cluster sets}.
\newblock Cambridge Tracts in Mathematics and Mathematical Physics, No. 56.
  Cambridge University Press, Cambridge, 1966.

\bibitem{devaney-goldberg}
Robert~L. Devaney and Lisa~R. Goldberg.
\newblock Uniformization of attracting basins for exponential maps.
\newblock {\em Duke Math. J.}, 55(2):253--266, 1987.

\bibitem{faghen}
N{\'u}ria Fagella and Christian Henriksen.
\newblock Deformation of entire functions with {B}aker domains.
\newblock {\em Discrete Contin. Dyn. Syst.}, 15(2):379--394, 2006.

\bibitem{garnett}
John~B. Garnett.
\newblock {\em Bounded analytic functions}, volume 236 of {\em Graduate Texts
  in Mathematics}.
\newblock Springer, New York, first edition, 2007.

\bibitem{hei57}
Maurice Heins.
\newblock Asymptotic spots of entire and meromorphic functions.
\newblock {\em Ann. of Math. (2)}, 66:430--439, 1957.

\bibitem{heins86}
Maurice Heins.
\newblock Some characterizations of finite {B}laschke products of positive
  degree.
\newblock {\em J. Analyse Math.}, 46:162--166, 1986.

\bibitem{hubbardschleicher}
John Hubbard, Dierk Schleicher, and Scott Sutherland.
\newblock How to find all roots of complex polynomials by {N}ewton's method.
\newblock {\em Invent. Math.}, 146(1):1--33, 2001.

\bibitem{kuratowski}
Kazimierz Kuratowski.
\newblock {\em Topology. {V}ol. {II}}.
\newblock New edition, revised and augmented. Translated from the French by A.
  Kirkor. Academic Press, New York, 1968.

\bibitem{LVActa}
Olli Lehto and K.~I. Virtanen.
\newblock Boundary behaviour and normal meromorphic functions.
\newblock {\em Acta Math.}, 97:47--65, 1957.

\bibitem{mayer}
Sebastian Mayer and Dierk Schleicher.
\newblock Immediate and virtual basins of {N}ewton's method for entire
  functions.
\newblock {\em Ann. Inst. Fourier (Grenoble)}, 56(2):325--336, 2006.

\bibitem{milnor}
John Milnor.
\newblock {\em Dynamics in one complex variable}, volume 160 of {\em Annals of
  Mathematics Studies}.
\newblock Princeton University Press, Princeton, NJ, third edition, 2006.

\bibitem{pommerenke-book}
Christian Pommerenke.
\newblock {\em Boundary behaviour of conformal maps}, volume 299 of {\em
  Grundlehren der Mathematischen Wissenschaften [Fundamental Principles of
  Mathematical Sciences]}.
\newblock Springer-Verlag, Berlin, 1992.

\bibitem{johannes}
Johannes R{\"u}ckert and Dierk Schleicher.
\newblock On {N}ewton's method for entire functions.
\newblock {\em J. Lond. Math. Soc. (2)}, 75(3):659--676, 2007.

\bibitem{shapiro}
Joel~H. Shapiro.
\newblock {\em Composition operators and classical function theory}.
\newblock Universitext: Tracts in Mathematics. Springer-Verlag, New York, 1993.

\bibitem{shishikura}
Mitsuhiro Shishikura.
\newblock The connectivity of the {J}ulia set and fixed points.
\newblock In {\em Complex dynamics}, pages 257--276. A K Peters, Wellesley, MA,
  2009.

\bibitem{whyburn}
Gordon~Thomas Whyburn.
\newblock {\em Analytic topology}.
\newblock American Mathematical Society Colloquium Publications, Vol. XXVIII.
  American Mathematical Society, Providence, R.I., 1963.

\end{thebibliography}

\end{document}